\newcommand{\Id}{\mathbbm{1}}
\newcommand{\Or}{\mathcal{O}}
\newcommand{\E}{\mathbbm{E}}
\newcommand{\e}{\varepsilon}
\newcommand{\I}{{\rm i}}
\newcommand{\R}{\mathbb{R}}
\newcommand{\Z}{\mathbb{Z}}
\newcommand{\x}{\mathbf{x}}
\numberwithin{equation}{section}
\DeclareMathOperator{\Ai}{Ai}
\DeclareMathOperator{\Pb}{\mathbbm{P}}
\newtheorem{prop}{Proposition}[section]
\newtheorem{thm}[prop]{Theorem}
\newtheorem{lem}[prop]{Lemma}
\newtheorem{defin}[prop]{Definition}
\newtheorem{cor}[prop]{Corollary}
\newtheorem{rem}[prop]{Remark}
\newenvironment{remark}{\begin{rem}\normalfont}{\end{rem}}
\title{The second class particle process at shocks}
\author{Patrik L.\ Ferrari\thanks{Institute for Applied Mathematics, Bonn University, Endenicher Allee 60, 53115 Bonn, Germany. E-mail: {\tt ferrari@uni-bonn.de}} \and
Peter Nejjar\thanks{Institut für Mathematik, Universit\"at Potsdam, Karl-Liebknecht-Str.~24-25, 14476 Potsdam. E-mail: {\tt peter.nejjar@uni-potsdam.de}}
}
\date{December 19, 2023}
\begin{document}
\sloppy
\maketitle

\begin{abstract}
We consider the totally asymmetric simple exclusion process (TASEP) starting with a shock discontinuity at the origin, with asymptotic densities $\lambda$ to the left of the origin and $\rho$ to the right of it and $\lambda<\rho$. We find an exact identity for the distribution of a second class particle starting at the origin. Then we determine the limiting joint distributions of the second class particle. Bypassing the last passage percolation model, we work directly in TASEP, allowing  us to extend previous one-point distribution results via a more direct and shorter ansatz.
\end{abstract}

\section{Introduction}
The totally asymmetric simple exclusion process (TASEP) on $\Z$ is one of the simplest non-reversible interacting stochastic particle system. The occupation variables of the TASEP at time $t$ are denoted by $\eta_t(x)$ with $x\in\Z$, where $\eta_t(x)=1$ means site $x$ is occupied and $\eta_t(x)=0$ means site $x$ is empty (at time $t$). The TASEP dynamics is simple: particles jump one step to the right and are allowed to do so only if their right neighboring site is empty. Jumps are independent of each other and all have rate $1$.

More precisely, denoting by $\eta\in \Omega=\{0, 1\}^\Z$ a configuration, the infinitesimal generator of TASEP is given by the closure of the operator $L$ given by
\begin{equation}
Lf(\eta)=\sum_{x\in\Z}\eta(x)(1-\eta(x+1))(f(\eta^{x,x+1})-f(\eta)),
\end{equation}
where $f$ is a cylinder function (depending on finitely many occupation variables) and $\eta^{x,x+1}$ denotes the configuration obtained from $\eta$ by exchanging the occupation variables at $x$ and $x+1$. For the general theory and well-definiteness of the semigroup generated by $L$, see e.g.~\cite{Li85b}.

Under hydrodynamic scaling, the particle density $\varrho$ is well known to evolve according to the Burgers equation (see e.g.~\cite{BGRS10} for a much more general result)
\begin{equation}\label{eq1}
\partial_t \varrho+\partial_x \varrho(1-\varrho)=0.
\end{equation}
In particular, for the initial condition $\varrho(x,0)=\lambda \Id_{x\leq 0}+\rho \Id_{x> 0}$, with $\lambda<\rho$, the unique entropy solution of \eqref{eq1} (see e.g.~Theorem 4 of in \S 3.4.4 of~\cite{Ev10} for entropy solutions) is
\begin{equation}
\varrho(x,t)=\lambda \Id_{(-\infty,(1-\lambda-\rho)t]}(x)+\rho \Id_{((1-\lambda-\rho)t,+\infty)}(x).
\end{equation}
Thus the discontinuity at the origin, also known as shock discontinuity or shortly shock, is preserved and it moves with speed $v_s=1-\lambda-\rho$.

In this paper we study the evolution of a second class particle which started at the origin and determine its limiting process. The second class particle can be seen as the discrepancy between two TASEP configurations $\eta$ and $\tilde\eta$, where at time $0$, $\eta_0(x)=\tilde \eta_0(x)$ for all $x\neq 0$ and $\eta_0(0)=0$ while $\tilde\eta_0(0)=1$. Under basic coupling  the configurations $\eta_t$ and $\tilde\eta_t$ differ exactly in one site (see e.g.~\cite{Li99}), which is the position of the second class particle, denoted by $X^{\rm 2nd}(t)$. Second class particle are very useful in presence of shock discontinuities since they are attracted by them so that the position of the shock can be identified by the position of the second class particle, see~\cite{Li99}, Chapter 3.

The case of Bernoulli-Bernoulli initial condition, namely where each site is initially occupied independently with probability $\lambda$ for sites in $\Z_-$ and $\rho$ for sites in $\Z_+$, has been extensively studied long time ago. First it was shown that the fluctuations of $X^{\rm 2nd}(t)$ with respect to macroscopic position $(1-\lambda-\rho)t$ are in the $t^{1/2}$ scale~\cite{Fer90}, while in~\cite{FF94b} it is proven that the fluctuations are Gaussian and the limit process is a Brownian motion, see also~\cite{PG90,DMKPS89} for related results. The reason for  the Brownian behavior lies in the fact that $X^{\rm 2nd}(t)$ can be directly related to the random initial data,
see Theorem 1.1 in~\cite{FF94b}.

For shocks with non-random initial conditions the situation is different, since in this case the typical KPZ fluctuations coming from the dynamics are relevant (unlike the previous case). In~\cite{FGN17} we have proven that for initial conditions with non-random densities $\lambda<\rho$, the fluctuations of the second class particle starting from the origin is in the $t^{1/3}$ scale and its distribution is given by the difference of two independent random variable with (rescaled) GOE Tracy-Widom distributions. A similar structure was previously shown for the fluctuation of a related quantity, the competition interface in the last passage percolation (LPP) model, see~\cite{FN16,N17} for non-random initial conditions and~\cite{FMP09} for the Bernoulli case. The structure of independence at shocks was studied in~\cite{FN13} and for multishocks in~\cite{FN19}. This arises also as limit of the soft shock process introduced in~\cite{FN14}, see~\cite{QR18}. More recently using a symmetry theorem in multi-colored TASEP~\cite{BB19}, identities for the one-point distribution of a second class particle were derived for some class of initial conditions (essentially finite perturbations of the step initial conditions) and their asymptotic fluctuations analyzed in various scaling~\cite{BF22}. For a certain shock, KPZ fluctuations of the second class particle in ASEP have been obtained in~\cite{N20CMP}.

The results of this paper are as follows. First of all we extend the result of~\cite{FGN17} to the convergence of joint distributions. Secondly we do it without using the mapping to LPP and the competition interface therein. In particular we do not have to deal with a random time change of the competition interface as in~\cite{FGN17}.  We thus  provide a direct  understanding of the second class particle at the shock. To do so,  we obtain  in Theorem~\ref{thmDistrSecondClass} an explicit expression for the distribution function of the second class particle in terms of the distribution of two height functions associated with TASEP configurations. Using this theorem as starting point, we are able to derive the asymptotic second class particle process in a relatively short way. In Theorem~\ref{thmMain} we derive a general result based on some assumptions. As illustration we present the case of a shock  created by \textit{deterministic} initial data (see \eqref{eq5.5}) between density $\lambda$ and density $\rho, \lambda<\rho$: Setting
\begin{equation}
{\cal H}^-(u)=\chi_-^{2/3} 2^{4/3} {\cal A}_1(\chi_-^{-1/3}2^{-5/3}u),\quad {\cal H}^+(u)=\chi_+^{2/3} 2^{4/3} \widetilde {\cal A}_1(\chi_+^{-1/3}2^{-5/3}u),
\end{equation}
with ${\cal A}_1$ and $\widetilde {\cal A}_1$ two  independent Airy$_1$ processes~\cite{Sas05,BFPS06}, $\chi_-=\lambda(1-\lambda)$, $\chi_+=\rho(1-\rho)$, we  show in Corollary~\ref{truecor}
\begin{equation}
\lim_{t\to\infty}\frac{X^{\rm 2nd}(t+\tau t^{2/3})-v_s (t+\tau t^{2/3})}{t^{1/3}}=\frac{{\cal H}^-((\lambda-\rho)\tau)-{\cal H}^+((\rho-\lambda)\tau)}{2(\rho-\lambda)}
\end{equation}
in the sense of finite-dimensional distribution (recall $v_s= 1-\lambda-\rho$ is the speed of the shock). Thus the second class particle process is asymptotically distributed as the difference of two independent Airy$_1$ processes.

The rest of the paper is organized as follows. In Section~\ref{SectFiniteTimeResult} we derive the finite-time result, Theorem~\ref{thmDistrSecondClass}. In Section~\ref{sectAsymptotics} we prove the asymptotic result, Theorem~\ref{thmMain} and apply it to the special case exactly constant densities in Corollary~\ref{corAspecial}. In Appendix~\ref{AppFlat} we give the outline on how to derive the limit to the Airy$_1$ process for densities different from $1/2$, using the method of the KPZ fixed point introduced in~\cite{MQR17}. This is a well expected result, but for general densities it is not available in the literature, except for the one-point distribution case~\cite{FO17}. Appendix~\ref{AppStepIC} contains some known results on step initial conditions, which are used as input.

\section{Distribution of the second class particle}\label{SectFiniteTimeResult}
In this section we introduce the model and derive the first main theorem, namely a new identity on the distribution of the second class particle in terms of two height functions.

\subsection{The model}
We consider the totally asymmetric simple exclusion process (TASEP) on $\Z$ with initial condition generating a shock at the origin and one second class particle starting at the origin. We determine the limiting process of the second class particle.

Particles in TASEP have jump rate one and they jump to their neighboring right site, conditioned on being free. One can graphically construct TASEP via i.i.d. Poisson processes $\mathcal{P}(x),x\in \Z,$ of rate one:   if there is a particle at $x$ and no particle at $x+1$ when $ \mathcal{P}(x)$ has a jump,
the particle at $x$ jumps to $x+1$. The \textit{basic coupling }couples TASEPs with different initial configurations together  by making them use the same Poisson processes $\mathcal{P}(x),x\in \Z.$ For a TASEP $\eta$, we denote by $\eta_t(x)\in\{0,1\}$ the occupation variable at time $t\in\R_+$ and site $x\in\Z$ and by $\eta_0$ the initial configuration. We associate a height function by setting
\begin{equation}
h(0,0)=0,\quad h(x+1,t)-h(x,t)=1-2\eta_t(x).
\end{equation}

If we have a local minimum at $x$, that is, if $h(x,t)-h(x-1,t)=-1$ and $h(x+1,t)-h(x,t)=1$, then we have $\eta_t(x-1)=1$ and $\eta_t(x)=0$. When particle at site $x-1$ jumps to site $x$, the local minimum at $x$ becomes a local maximum. Thus a Poisson event (a jump trial) at site $x-1$ will attempt to increase the height function at site $x$ by $2$.

Notation: if we have a height function $h^*$ for some symbol $*$, then the corresponding particle configuration is denoted by $\eta^*$.

A second class particle can be seen as a discrepancy between two particle configurations: consider $\eta$ and $\tilde\eta$ two configurations where $\eta(x)=\tilde\eta(x)$ for all $x\in\Z\setminus\{x_0\}$ and $\eta(x_0)=0$, $\tilde\eta(x_0)=1$. Then we say that we have a second class particle at site $x_0$. We couple TASEP with different initial configuration by the basic coupling, that it, we use the same Poisson events for the jump trials. In particular, if we start with a single discrepancy, then for all times there will be a single discrepancy, which we denote by $X^{\rm 2nd}(t)$ and call it the position of the second class particle associated with the configurations $\eta_t$ and $\tilde\eta_t$. We have
\begin{equation}
X^{\rm 2nd}(t)=\sum_{x\in\Z} x \Id_{\eta_t(x)\neq \tilde\eta_t(x)}.
\end{equation}

Now consider two initial configurations $\eta_0$ and $\tilde\eta_0$ with a discrepancy at $0$ (i.e., $\eta_0(0)=0$ and $\tilde\eta_0(0)=1$), i.e., we have a second class particle starting at the origin: $X^{\rm 2nd}(0)=0$. Then we have the relation
\begin{equation}\label{eq1.2}
\begin{aligned}
\tilde h(x,t)&=h(x,t), &\textrm{for all }x\leq X^{\rm 2nd}(t),\\
\tilde h(x,t)&=h(x,t)-2, &\textrm{for all }x> X^{\rm 2nd}(t).
\end{aligned}
\end{equation}
Define the configurations
\begin{equation}\label{eq2.5}
\begin{aligned}
\eta^-_0(x)&=\eta_0(x)\Id_{x<0}+0\Id_{x\geq 0},\\
\eta^+_0(x)&=\eta_0(x)\Id_{x\geq 0}+1\Id_{x<0},\\
\tilde \eta^+_0(x)&=\tilde\eta_0(x)\Id_{x\geq 0}+1\Id_{x<0}.
\end{aligned}
\end{equation}
In particular, note that $\eta^-_0(x)\leq \tilde\eta^+_0(x)$ for all $x\in\Z$ and by basic coupling this holds for any time $t$, namely
\begin{equation}\label{eq1.4}
\eta^-_t(x)\leq \tilde\eta^+_t(x)\textrm{ for all }x\in\Z.
\end{equation}
In terms of height function we have, see Figure~\ref{FigIC},
\begin{equation}
\begin{aligned}
h^-(x,0)&=h(x,0)\Id_{x<0}+|x|\Id_{x\geq 0},\\
h^+(x,0)&=h(x,0)\Id_{x\geq 0}+|x|\Id_{x<0},\\
\tilde h^+(x,0)&=\tilde h(x,0)\Id_{x\geq 0}+|x|\Id_{x<0}=(h(x,0)-2)\Id_{x\geq 1}+|x|\Id_{x\leq 0}.
\end{aligned}
\end{equation}
\begin{figure}
\begin{center}
\includegraphics[height=7cm]{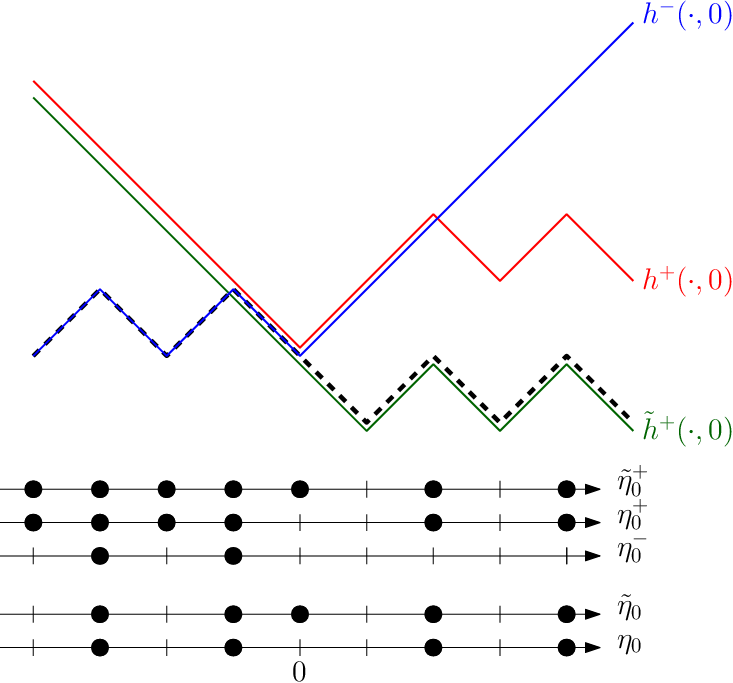}
\caption{The initial height function $h(x,0)$ is the dashed thick line, which matches with $h^-(x,0)$ for $x<0$ and with $h^+(x,0)$ for $x>0$. The second class particle starts at $x=0$. The small vertical shifts are just for making the illustration clearer and avoid overlapping height functions with different initial conditions.}
\label{FigIC}
\end{center}
\end{figure}
It is well known that
\begin{equation}\label{eq1.5}
h(x,t)=\min\{h^-(x,t),h^+(x,t)\},\quad \tilde h(x,t)=\min\{h^-(x,t),\tilde h^+(x,t)\}.
\end{equation}

\subsection{Distribution of the second class particle}
By definition of the second class particle, we have
\begin{equation}
\{X^{\rm 2nd}(t)=x\}=\{h(x,t)=\tilde h(x,t), h(x+1,t)>\tilde h(x+1,t)\}.
\end{equation}
Our first result is an expression of the distribution of the second class particle in terms of the height function $h^-$ and $\tilde h^+$ only.
\begin{thm}\label{thmDistrSecondClass}
With the above notations, for $0\leq t_1<t_2<\ldots<t_m$ and $x_1,\ldots,x_n\in\Z$, we have the identity
\begin{equation}
\Pb(X^{\rm 2nd}(t_i)\geq x_i,1\leq i\leq m)=\Pb(h^-(x_i,t_i)\leq \tilde h^+(x_i,t_i),1\leq i\leq m).
\end{equation}
\end{thm}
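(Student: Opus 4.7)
The plan is to prove the identity pointwise on the underlying probability space; once that is done, the joint statement follows by simply intersecting the one-time events over $i=1,\ldots,m$. From \eqref{eq1.2}, $\{X^{\rm 2nd}(t)\geq x\}=\{\tilde h(x,t)=h(x,t)\}$, and \eqref{eq1.5} rewrites the latter as $\{\min(h^-(x,t),\tilde h^+(x,t))=\min(h^-(x,t),h^+(x,t))\}$. Basic coupling applied to $(\eta^+,\tilde\eta^+)$ gives $\tilde h^+\leq h^+$, so whenever $h^-(x,t)\leq\tilde h^+(x,t)$ one has $h^-\leq\tilde h^+\leq h^+$, both minima equal $h^-$, and the easy inclusion $\{h^-(x,t)\leq\tilde h^+(x,t)\}\subseteq\{X^{\rm 2nd}(t)\geq x\}$ follows.

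For the reverse inclusion, a short case check on the relative order of $h^-,h^+,\tilde h^+$ shows that $\tilde h(x,t)=h(x,t)$ can hold together with $h^-(x,t)>\tilde h^+(x,t)$ only in the ``bad case''
\[
\tilde h^+(x,t)=h^+(x,t)<h^-(x,t).
\]
Let $Y(t)$ denote the unique second class particle of the pair $(\eta^+,\tilde\eta^+)$, with $Y(0)=0$; since $\tilde h^+-h^+$ takes the value $0$ on $\{x\leq Y(t)\}$ and $-2$ on $\{x>Y(t)\}$, the condition $\tilde h^+(x,t)=h^+(x,t)$ is equivalent to $Y(t)\geq x$. Moreover, the spatial increment $(h^+-h^-)(y+1,t)-(h^+-h^-)(y,t)=2(\eta^-(y,t)-\eta^+(y,t))\leq 0$ makes $h^+-h^-$ non-increasing in $y$. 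Therefore the bad case is ruled out as soon as
\begin{equation*}
h^-(Y(t),t)\leq h^+(Y(t),t)\qquad\text{for all }t\geq 0 \tag{$\star$}
\end{equation*}
is established, because then for any $x\leq Y(t)$ one has $h^-(x,t)\leq h^+(x,t)=\tilde h^+(x,t)$.

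The principal obstacle is therefore $(\star)$, which I plan to prove by a direct dynamical induction in the three-class coupling $\eta^-\leq\eta^+\leq\tilde\eta^+$, where $Y(t)$ marks the unique lowest-class ($C$) particle. Set $\phi(t):=h^+(Y(t),t)-h^-(Y(t),t)$, so $\phi(0)=0$. Since the $C$-particle sits at $Y(t)$ one has $\eta^-(Y(t))=\eta^+(Y(t))=0$, and consequently only Poisson clocks at $Y(t)$ or $Y(t)-1$ can alter $\phi$. Exactly three effective scenarios arise: $(a)$ $Y$ jumps forward (clock at $Y(t)$, $\eta^+(Y(t)+1)=0$); $(b)$ a first-class particle at $Y(t)-1$ jumps to $Y(t)$ (so $\eta^-(Y(t)-1)=1$); $(c)$ only the $\eta^+$-copy of the clock at $Y(t)-1$ fires ($\eta^-(Y(t)-1)=0<\eta^+(Y(t)-1)=1$, blocked in $\tilde\eta^+$). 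A one-line calculation with the slope identity $h^\pm(y+1,t)-h^\pm(y,t)=1-2\eta^\pm(y,t)$ shows that in $(a)$ and $(b)$ the relevant values of $h^-$ and $h^+$ are updated by the same amount so $\phi$ is invariant, whereas in $(c)$ only $h^+(Y(t))$ increases (by $+2$), so $\phi$ jumps up by $+2$. All other clock rings leave $\phi$ unchanged. Thus $\phi$ is non-decreasing, $(\star)$ holds, and the pointwise identity $\{X^{\rm 2nd}(t)\geq x\}=\{h^-(x,t)\leq\tilde h^+(x,t)\}$---hence the theorem---follows.
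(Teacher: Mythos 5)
Your proposal is correct, and the easy inclusion coincides with the paper's step (a), but your argument for the hard inclusion takes a genuinely different route. The paper tracks the second class particle $Y$ of the pair $(\eta^-,\tilde\eta^+)$, identifies it with $X^{\rm 2nd}$ by a recolouring argument (Proposition~\ref{y=x}), proves the exact equality $h^-(Y(t),t)=\tilde h^+(Y(t),t)$ by induction over the jump times (Proposition~\ref{yprop}), and then propagates the inequality to the left of $Y(t)$ (Proposition~\ref{ineqprop}). You instead track the single discrepancy $Y$ between $\eta^+$ and $\tilde\eta^+$ (which is in general \emph{not} $X^{\rm 2nd}$, and you correctly never claim it is), note that a failure of the inclusion forces the bad case $\tilde h^+(x,t)=h^+(x,t)<h^-(x,t)$, hence $x\leq Y(t)$, and exclude it through the spatial monotonicity of $h^+-h^-$ (from $\eta^-_t\leq\eta^+_t$) combined with the one-sided inequality $(\star)$, proved by showing that $\phi(t)=(h^+-h^-)(Y(t),t)$ is non-decreasing under the graphical dynamics. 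What each approach buys: yours needs only an inequality and dispenses entirely with the identification $Y=X^{\rm 2nd}$ and the class-changing argument, while the paper's yields the stronger exact identity at the location of the second class particle it actually studies later; your clock-by-clock analysis of $\phi$ plays the same role (and has the same inductive flavour) as the paper's Proposition~\ref{yprop}. One small bookkeeping caveat: in your cases (b) and (c) the discrepancy $Y$ simultaneously moves one step to the left, so the update of $\phi$ must combine the height increment at the old site with the slope difference at the new site; carrying this out with the slope identity gives exactly the net changes $0$, $0$, $+2$ you assert, so the monotonicity of $\phi$, and with it the theorem, indeed follows.
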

\begin{proof}
We prove that for any $x\in\Z$ and any $t\geq 0$, $\{X^{\rm 2nd}(t)\geq x\} = \{h^-(x,t)\leq \tilde h^+(x,t)\}$ under the basic coupling. For this we prove the inclusions in both directions.

(a) Assume that $h^-(x,t)\leq \tilde h^+(x,t)$, then also $h^-(x,t)\leq h^+(x,t)$ and by \eqref{eq1.5} $h(x,t)=h^-(x,t)=\tilde h(x,t)$, which implies by \eqref{eq1.2} that $X^{\rm 2nd}(t)\geq x$.

(b) Let $Y(t)$ be as in Proposition~\ref{y=x}.  Using Propositions~\ref{y=x},~\ref{yprop}, and~\ref{ineqprop} below, we conclude
\begin{equation}
\begin{aligned}
&\{X^{\rm 2nd}(t)\geq x\}=\bigcup_{z\geq x}\{Y(t)=z\}\subseteq \bigcup_{z\geq x}  \{h^{-}(z,t)=\tilde{h}^{+}(z,t)\}\\
&\subseteq \bigcup_{z\geq x} \bigcap_{y\leq z}\{h^-(y,t)\leq \tilde h^+(y,t)\}\subseteq \{h^-(x,t)\leq \tilde h^+(x,t)\}.
\end{aligned}
\end{equation}
\end{proof}

\begin{prop}\label{ineqprop}
Let $x\in \Z$. We have for all $t\geq 0$
\begin{equation}
\{h^{-}(x,t)= \tilde{h}^{+}(x,t)\}\subseteq\bigcap_{y\leq x}\{h^{-}(y,t)\leq \tilde{h}^{+}(y,t)\}.
\end{equation}
\end{prop}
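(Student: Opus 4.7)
The plan is to reduce the claim to monotonicity of the function $y \mapsto h^-(y,t) - \tilde h^+(y,t)$ in $y$, which follows directly from the pointwise ordering of the two particle configurations.

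\textbf{Step 1: Translate the basic-coupling inequality into a statement about height increments.} By \eqref{eq1.4}, under basic coupling we have $\eta^-_t(y) \leq \tilde\eta^+_t(y)$ for every $y \in \Z$ and every $t \geq 0$. Using the definition $h^-(y+1,t) - h^-(y,t) = 1 - 2\eta^-_t(y)$ and the analogous identity for $\tilde h^+$, this pointwise ordering becomes
\begin{equation*}
h^-(y+1,t) - h^-(y,t) \;\geq\; \tilde h^+(y+1,t) - \tilde h^+(y,t) \qquad \text{for all } y \in \Z.
\end{equation*}

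\textbf{Step 2: Conclude monotonicity of the difference.} The previous display says precisely that the integer-valued function $D(y) := h^-(y,t) - \tilde h^+(y,t)$ has non-negative increments in $y$, i.e.\ $D$ is non-decreasing in $y \in \Z$. (Note that this argument uses only the increments of the two height functions, so it is insensitive to any time-dependent additive constant that might distinguish their absolute values.)

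\textbf{Step 3: Extract the claim.} On the event $\{h^-(x,t) = \tilde h^+(x,t)\}$ we have $D(x) = 0$. Combined with monotonicity, this yields $D(y) \leq 0$ for every $y \leq x$, i.e.\ $h^-(y,t) \leq \tilde h^+(y,t)$ for every $y \leq x$. This is exactly the inclusion in the statement, so the proposition follows.

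\textbf{Anticipated obstacle.} There is essentially no hard step: the entire argument is a one-line monotonicity observation once the basic-coupling inequality \eqref{eq1.4} is in hand. The only mild subtlety is to make sure that the argument goes through at the level of height differences and does not require comparing absolute heights of $h^-$ and $\tilde h^+$ (which would be more delicate since each height function evolves its own value at the origin over time). Working with the discrete derivative in $y$ sidesteps this entirely.
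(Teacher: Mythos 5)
Your proposal is correct. It rests on exactly the same input as the paper's proof, namely the coordinate-wise ordering \eqref{eq1.4} under the basic coupling translated into an ordering of the height increments via $h(y+1,t)-h(y,t)=1-2\eta_t(y)$; the difference is purely in the packaging. You turn this into a global statement — $D(y)=h^{-}(y,t)-\tilde h^{+}(y,t)$ is non-decreasing in $y$ — and read off the inclusion directly from $D(x)=0$, whereas the paper argues by contraposition, picking the largest $y\leq x$ with $h^{-}(y,t)>\tilde h^{+}(y,t)$ and deriving a contradiction with \eqref{eq1.4} at that site. Your monotonicity formulation is marginally cleaner: it avoids the maximal-site bookkeeping and the paper's intermediate claim $h^{-}(y+1,t)=\tilde h^{+}(y+1,t)$ (whose exact equality implicitly uses that the two height functions have the same parity at each site, though only the inequality is really needed there), while yielding precisely the same conclusion.
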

\begin{proof}
We prove by contraposition. Assume there is a $y\leq x$ such that $h^{-}(y,t)>\tilde{h}^{+}(y,t)$. Take the largest such $y$. If $y=x$, clearly $h^{-}(x,t)\neq \tilde{h}^{+}(x,t).$ If not, we must have $h^{-}(y+1,t)= \tilde{h}^{+}(y+1,t)$. This however can only happen if $\eta^{-}_{t}(y)=1,\tilde{\eta}^{+}_{t}(y)=0,$ which is impossible since $\eta^{-}_{t}\leq \tilde{\eta}^{+}_{t}$ coordinate wise, see \eqref{eq1.4}.
\end{proof}
Since $\eta^{-}_{0}\leq \tilde{\eta}^{+}_{0}$ coordinate wise, we can define a TASEP with first class particles at positions $\{x:\eta^{-}_{0}(x)=1\}$ and second class particles at positions $\{x:\tilde{\eta}^{+}_{0}(x)-\eta^{-}_{0}(x)=1\}$. Note that by \eqref{eq2.5} there is initially a second class particle at position $0$. We call $Y(t)$ the position of this second class particle at time $t$. As the next proposition shows,  $Y$  is just $X^{\rm 2nd}$ in disguise, but $Y$ will be useful to prove Theorem~\ref{thmDistrSecondClass}.
\begin{prop}\label{y=x} Under the basic coupling, we have
\begin{equation}
(Y(t),t\geq 0)=(X^{\rm 2nd}(t),t\geq 0).
\end{equation}
\end{prop}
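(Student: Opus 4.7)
The plan is to prove $Y(t)\equiv X^{\rm 2nd}(t)$ by induction on the sequence of Poisson events in the graphical construction, maintaining simultaneously the two local matching invariants
\[ \eta_t(x)=\eta^-_t(x)\ \text{for all}\ x\leq X^{\rm 2nd}(t), \qquad \tilde\eta_t(x)=\tilde\eta^+_t(x)\ \text{for all}\ x>X^{\rm 2nd}(t). \]
At $t=0$ both identities hold directly from \eqref{eq2.5}, and $Y(0)=0=X^{\rm 2nd}(0)$.

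For the inductive step, suppose at time $t^-$ we have $Y(t^-)=X^{\rm 2nd}(t^-)=z$ together with both invariants. The only Poisson events that can move $X^{\rm 2nd}$ are those at sites $z-1$ or $z$: at any other site, $\eta$ and $\tilde\eta$ agree, so they transition jointly and the discrepancy stays at $z$; the same reasoning, together with the fact that $Y$ is a class-2 particle, gives the analogous statement for $Y$ in the multi-class TASEP that defines it. At a Poisson event at $z$, the $\tilde\eta$-particle jumps iff $\tilde\eta_{t^-}(z+1)=0$; by the second invariant this equals $\tilde\eta^+_{t^-}(z+1)=0$, which is precisely the condition for $Y$ to jump rightward in the multi-class dynamics. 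At a Poisson event at $z-1$, the $\eta$-particle jumps iff $\eta_{t^-}(z-1)=1$; by the first invariant this equals $\eta^-_{t^-}(z-1)=1$, which is precisely the condition for a first-class particle at $z-1$ to swap with $Y$ and push it leftward. Thus $Y$ and $X^{\rm 2nd}$ undergo the same transition at every event.

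The main obstacle is checking that the two invariants are reestablished after each event in which $X^{\rm 2nd}$ actually moves. For a rightward move triggered at site $z$, preservation of the first invariant at the new boundary $z+1$ reduces to $\eta^-_t(z+1)=0$; this follows from $\eta^-_{t^-}\leq\tilde\eta^+_{t^-}$ (by \eqref{eq1.4}) combined with the triggering condition $\tilde\eta^+_{t^-}(z+1)=0$, and from the observation that neither $\eta$ nor $\eta^-$ actually performs a jump at this event. Preservation of the second invariant requires that $\tilde\eta^+$ performs the jump $z\mapsto z+1$ in lockstep with $\tilde\eta$, which uses $\tilde\eta^+_{t^-}(z)\geq\tilde\eta_{t^-}(z)=1$ (from the monotonicity $\tilde\eta^+_0\geq\tilde\eta_0$ propagated by basic coupling) together with $\tilde\eta^+_{t^-}(z+1)=0$. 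The leftward case at site $z-1$ is handled by a symmetric argument using the first invariant and the accompanying jump of $\eta^-_{t^-}$ from $z-1$ to $z$. Once both invariants are reestablished at time $t$, the induction closes.
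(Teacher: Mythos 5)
Your proposal is correct, but it follows a genuinely different route from the paper. The paper's proof is a two-line coupling argument: the multi-class configuration defining $Y$ is obtained from the one defining $X^{\rm 2nd}$ by downgrading the particles initially to the right of the origin to second class and adding second class particles at the empty sites to the left, and neither operation changes the evolution of the tracked second class particle (it is blocked by particles of either class to its right, and first class dynamics is insensitive to second class particles, so the left pushes are unchanged). You instead verify the identity mechanically, by an induction over Poisson events that propagates the matching invariants $\eta_t=\eta^-_t$ on $(-\infty,X^{\rm 2nd}(t)]$ and $\tilde\eta_t=\tilde\eta^+_t$ on $(X^{\rm 2nd}(t),\infty)$; given these, the jump triggers of $X^{\rm 2nd}$ and of $Y$ at sites $z$ and $z-1$ coincide, and you re-establish the invariants at the moving boundary using \eqref{eq1.4} together with the ordering $\tilde\eta^+_t\geq\tilde\eta_t$. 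This all checks out, with three small remarks: (i) after a rightward move the lockstep jump of $\tilde\eta^+$ is not actually needed, since the site $z+1$ drops out of the range of the second invariant; (ii) the leftward case is not literally the mirror image of the rightward one --- the new site entering the second invariant is handled by $\tilde\eta^+_t(z)\geq\tilde\eta_t(z)=1$ rather than by \eqref{eq1.4}, so your ``symmetric argument'' deserves a sentence; and (iii) ``induction on the sequence of Poisson events'' over all of $\Z$ requires the usual Harris-type justification that only finitely many events are relevant in a bounded space--time window, the same level of informality the paper allows itself in Proposition~\ref{yprop}. What your approach buys is a self-contained, local verification that does not invoke the standard (and in the paper unproved) facts about multi-class couplings, and your invariants make explicit the lockstep behaviour that underlies the height identities of Proposition~\ref{yprop}; what the paper's argument buys is brevity and independence from event-by-event bookkeeping.
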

\begin{proof}
Note that the evolution of $X^{\rm 2nd}$ (in fact, of any second class particle in TASEP) does not change if some first class particles initially to the right of $X^{\rm 2nd}$ are turned into second class particles, as $X^{\rm 2nd}$ gets blocked by both types of particles. Likewise, the evolution of $X^{\rm 2nd}$ does not change if new second class particles are added to its left.

In our case, the initial particle configuration from which $Y$ starts  is obtained from the initial configuration from which $X^{\rm 2nd}$  starts by turning all particles $\{x>0:\eta(x)=1\}$, i.e., all particles to the right of $X^{\rm 2nd}(0),$ into second class particles, and by adding second class particles at positions $\{x<0: \eta(x)=0\}$. By the preceding argument, this does not affect the evolution of  $X^{\rm 2nd}$ and thus the evolution of $X^{\rm 2nd}$ and $Y$ are identical.
\end{proof}
\begin{prop}\label{yprop}
We have
\begin{align}
\label{first}&h^{-}(Y(t),t)=\tilde{h}^{+}(Y(t),t),\\
&\label{second}h^{-}(Y(t)+1,t)=\tilde{h}^{+}(Y(t)+1,t)+2.
\end{align}
\end{prop}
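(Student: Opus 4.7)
The plan is to prove both identities by induction on Poisson clock rings, using the reformulation from Proposition~\ref{y=x} that interprets $Y(t)$ as a second-class particle in the two-species TASEP built from $\eta^-_t \leq \tilde\eta^+_t$, so that $\eta^-_t(Y(t))=0$ and $\tilde\eta^+_t(Y(t))=1$ for every $t$. With this in hand, \eqref{second} is an immediate consequence of \eqref{first}: the increment identities
\begin{equation*}
h^-(Y(t)+1,t)-h^-(Y(t),t)=1,\qquad \tilde h^+(Y(t)+1,t)-\tilde h^+(Y(t),t)=-1
\end{equation*}
give \eqref{second} as soon as $h^-(Y(t),t)=\tilde h^+(Y(t),t)$. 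Therefore the task reduces to proving \eqref{first}.

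At $t=0$, $Y(0)=0$ and the definitions in \eqref{eq2.5} together with the convention $h(0,0)=0$ give $h^-(0,0)=\tilde h^+(0,0)=0$. For the inductive step, assume \eqref{first} at time $t$ and analyze the next Poisson clock ring, at a site $z$. If $z\notin\{Y(t)-1,Y(t)\}$, then $Y$ does not move and the values of $h^-$ and $\tilde h^+$ at $Y(t)$ are unchanged, so \eqref{first} is inherited trivially.

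If $z=Y(t)$, then $\eta^-_t(Y(t))=0$ leaves $h^-$ unchanged; $\tilde\eta^+$ jumps iff $\tilde\eta^+_t(Y(t)+1)=0$, in which case $Y$ moves to $Y(t)+1$, $\tilde h^+(Y(t)+1)$ increases by $2$, and combining the increments above with the inductive hypothesis gives
\begin{equation*}
h^-(Y(t)+1,t^+)=h^-(Y(t),t)+1=\tilde h^+(Y(t),t)+1=\tilde h^+(Y(t)+1,t^+).
\end{equation*}
The case $z=Y(t)-1$ is symmetric: $\tilde\eta^+_t(Y(t))=1$ blocks $\tilde\eta^+$, so $\tilde h^+$ is unchanged, whereas $\eta^-$ jumps iff $\eta^-_t(Y(t)-1)=1$; when this occurs, the coupling $\eta^-\leq\tilde\eta^+$ forces $\tilde\eta^+_t(Y(t)-1)=1$, $Y$ moves to $Y(t)-1$, $h^-(Y(t))$ increases by $2$, and the analogous increment identity at $Y(t)-1$ restores \eqref{first}. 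This closes the induction.

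The main obstacle is the bookkeeping at $z\in\{Y(t)-1,Y(t)\}$: for each sub-case one must use basic coupling to decide which of $\eta^-,\tilde\eta^+$ actually jumps, determine the new position of $Y$, and verify that equality of heights is restored at $Y(t^+)$ rather than at the old $Y(t)$. Once laid out, each sub-case collapses to a short increment computation, with no tool beyond $\eta^-\leq\tilde\eta^+$ required.
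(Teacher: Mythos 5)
Your proof is correct, and it follows the same overall scheme as the paper (induction over the Poisson events that can affect $Y$, with separate right-jump and left-jump cases), but it differs in two ways that are worth noting. First, you observe that \eqref{second} is automatic from \eqref{first} once one knows $\eta^-_t(Y(t))=0$ and $\tilde\eta^+_t(Y(t))=1$, whereas the paper carries \eqref{second} through the induction as part of the inductive hypothesis. Second, and more substantially, in the left-jump case the paper first deduces \eqref{second} at time $\tau$, then invokes Proposition~\ref{ineqprop} to get the one-sided bound $h^-(Y(\tau),\tau)\leq\tilde h^+(Y(\tau),\tau)$, and combines the two to recover the equality \eqref{first}; you instead argue purely locally: since the $\eta^-$-particle that swaps with $Y$ sits at $Y(t)-1$ and the ordering \eqref{eq1.4} forces $\tilde\eta^+_t(Y(t)-1)=1$ as well, both height increments between $Y(t)-1$ and $Y(t)$ equal $-1$, so equality at $Y(t)$ transfers to equality at the new position $Y(t)-1$, whose height values are untouched by the event. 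This makes your proof of Proposition~\ref{yprop} independent of Proposition~\ref{ineqprop}, i.e.\ slightly more self-contained and elementary, at the price of the explicit bookkeeping over all clock rings at $Y(t)-1$ and $Y(t)$ (which the paper compresses by inducting only over jump times of $Y$). One small attribution slip: the fact that $\eta^-_t(Y(t))=0$ and $\tilde\eta^+_t(Y(t))=1$ for all $t$ is not the content of Proposition~\ref{y=x} (which identifies $Y$ with $X^{\rm 2nd}$); it is the definition of $Y$ as the tagged discrepancy in the coupling $\eta^-\leq\tilde\eta^+$ together with the standard preservation of discrepancies under the basic coupling — a fact the paper also uses without further comment, so nothing is lost.
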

\begin{proof}
The statement is true by definition for $t=0$ since $Y(0)=0$ and $\tilde{\eta}^{+}(0)=1>\eta^{-}(0)=0.$

Let $\tau$ be a jump time of $Y$, and assume \eqref{first}, \eqref{second} hold at $\tau^{-}$ (infinitesimal time before $\tau$), see Figure~\ref{FigJumps}.
\begin{figure}
\begin{center}
\includegraphics[height=4.5cm]{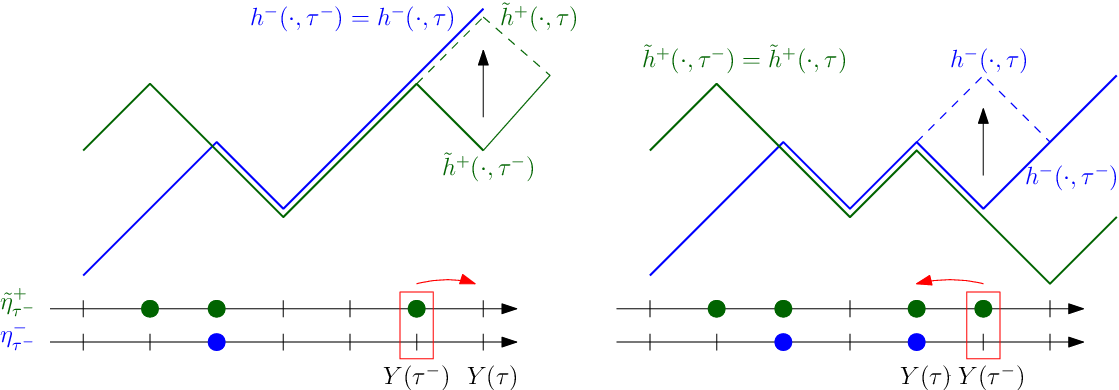}
\caption{Illustration of the configurations and height function. A clock rings at position $Y(\tau^-)$ at time $\tau$. (Left): the second class particle jumps   to the right; (Right): the second class particle jumps to the left.}
\label{FigJumps}
\end{center}
\end{figure}
We first consider the case  that it jumps to the right: $Y(\tau)=Y(\tau^-)+1$. Then $h^{-}(\cdot,\tau)=h^{-}(\cdot,\tau^{-})$ and
\begin{equation}
\begin{aligned}
\tilde{h}^{+}(Y(\tau),\tau)&= \tilde{h}^{+}(Y(\tau),\tau^{-})+2= \tilde{h}^{+}(Y(\tau^{-})+1,\tau^{-})+2 \\&=h^{-}(Y(\tau^{-})+1,\tau^{-})=h^{-}(Y(\tau),\tau),
\end{aligned}
\end{equation}
 so that \eqref{first} still holds at time $\tau$. Since $\eta^{-}_{\tau}(Y(\tau))=0$ and $\tilde{\eta}^{+}_{\tau}(Y(\tau))=1$ we get
\begin{equation}
\begin{aligned}
 h^{-}(Y(\tau)+1,\tau)&=h^{-}(Y(\tau),\tau)+1\\ &=\tilde{h}^{+}(Y(\tau),\tau)+1=\tilde{h}^{+}(Y(\tau)+1,\tau)+2,
\end{aligned}
\end{equation}
showing \eqref{second} at time $\tau$.

Let now   $Y$ jump to the left at time $\tau$. This implies
\begin{equation}
\begin{aligned}
&\tilde{h}^{+}(\cdot,\tau)=\tilde{h}^{+}(\cdot,\tau^{-}),\\
&h^{-}(Y(\tau^{-}),\tau)=h^{-}(Y(\tau^{-}),\tau^{-})+2.
\end{aligned}
\end{equation}
 By assumption, we have
\begin{equation}
\tilde{h}^{+}(Y(\tau^{-}),\tau^{-})=h^{-}(Y(\tau^{-}),\tau^{-}).
\end{equation}
This shows $h^{-}(Y(\tau^{-}),\tau)=\tilde{h}^{+}(Y(\tau^{-}),\tau)+2.$
 Since $Y(\tau)+1=Y(\tau^{-})$, \eqref{second} holds at time $\tau$.
 To show \eqref{first}, note that since \eqref{first} holds at $\tau^{-}$,  by Proposition~\ref{ineqprop} we have \begin{equation}h^{-}(Y(\tau),\tau)=h^{-}(Y(\tau),\tau^{-})\leq \tilde{h}^{+}(Y(\tau),\tau^{-})=\tilde{h}^{+}(Y(\tau),\tau)\end{equation}
 Now for arbitrary $x,t$, the conditions $h^{-}(x,t)\leq\tilde{h}^{+}(x,t)$ and at the same time $h^{-}(x+1,t)=\tilde{h}^{+}(x+1,t)+2$ imply $h^{-}(x,t)=\tilde{h}^{+}(x,t)$. Applying this with $x=Y(\tau)$ and $t=\tau$ shows \eqref{first} at time $\tau$.
\end{proof}

\section{Asymptotic results}\label{sectAsymptotics}
We briefly review some material from~\cite{BF22}, Section~4. It is shown there (and was known before) that, under the aforementioned basic coupling of using the same Poisson processes,  for any time $\tau\in [0,t]$ we have the identity
\begin{equation}\label{minid}
h(x,t)=\min_{y\in\Z}\{h(y,\tau)+h^{\rm step}_{y,\tau}(x,t)\},
\end{equation}
where $h^{\rm step}_{y,\tau}(x,t)$ is the height function starting at time $\tau$ from $h^{\rm step}_{y,\tau}(x,\tau)=|x-y|$ (this is the height function of the step initial data $\mathbf{1}_{\mathbb{Z}_{\leq y}}$ modulo a shift).
The identity \eqref{minid} allows us to define (not necessarily unique) backward geodesics, which go back in time from time point  $t$ to time $0$.
\begin{defin}\label{DefGeodesics}
Any backwards trajectory $\{\x(\tau),\tau:t\to 0\}$ with $\x(t)=x$ and satisfying
\begin{equation}\label{eq2.2}
h(x,t)=h(\x(\tau),\tau)+h^{\rm step}_{\x(\tau),\tau}(x,t)
\end{equation}
for all times $\tau\in [0,t]$ is called a \emph{backwards geodesics}.
\end{defin}

\subsection{Large time results}\label{SectAssumptions}
Let us consider a generic initial condition $h(x,0)$ (it could be even random) satisfying the following three assumptions:
\begin{itemize}
\item[(a)] \emph{Initial macroscopic shock around the origin}: assume that for some $0<\lambda<\rho<1$,
\begin{equation}
\lim_{x\to-\infty} x^{-1}h(x,0)=1-2\lambda,\quad \lim_{x\to\infty} x^{-1}h(x,0)=1-2\rho.
\end{equation}
\item[(b)] \emph{Control of the end-point of the backwards geodesics} starting at $v_s t$ with $v_s=1-\lambda-\rho$: let $\x^+$ (resp.\ $\x^-$) any backwards geodesics starting at $(v_s t,t)$ with initial profile $\tilde h^+(\cdot,0)$ (resp.\ $h^-(\cdot,0)$). Then assume for some $\delta>0$, we have
\begin{equation}
\lim_{t\to\infty}\Pb(\x^-(0)\leq -\delta t,\x^+(0)\geq \delta t) = 1.
\end{equation}
\item[(c)] \emph{Limit process under KPZ scaling} of $\tilde h^+,h^-$ with \emph{constant law along characteristics}. Let $\tilde\nu<1$ and  $|\theta|\leq t^{\tilde\nu}$. There exist limiting processes ${\cal H}^-(u)$ and ${\cal H}^+(u)$ whose distribution is continuous in $u$, such that
\begin{equation}\label{eq2.3}
\begin{aligned}
&\lim_{t\to\infty} \frac{h^-(v_s t+u t^{2/3}+(1-2\lambda)\theta,t+\theta)-{\bf h}^-(u,\theta,t)}{-t^{1/3}}={\cal H}^-(u),\\
&\lim_{t\to\infty} \frac{\tilde h^+(v_s t+u t^{2/3}+(1-2\rho)t,t+\theta)-{\bf h}^+(u,\theta,t)}{-t^{1/3}}={\cal H}^+(u),
\end{aligned}
\end{equation}
where
\begin{equation}\label{eq2.4}
\begin{aligned}
{\bf h}^-(u,\theta,t)&=(1-\lambda-\rho+2\lambda \rho) t+ (1-2\lambda) u t^{2/3}+(1-2\lambda(1-\lambda))\theta,\\
{\bf h}^+(u,\theta,t)&=(1-\lambda-\rho+2\lambda \rho) t+ (1-2\rho) u t^{2/3}+(1-2\rho(1-\rho))\theta
\end{aligned}
\end{equation}
are what we expect macroscopically from the solution of the Burgers equation. Below we will use the notations
\begin{equation}
\mu_s=1-\lambda-\rho+2\lambda \rho,\quad \chi_-=\lambda(1-\lambda),\quad \chi_+=\rho(1-\rho).
\end{equation}
\end{itemize}

\begin{thm}\label{thmMain}Let $\tau_k,s_k \in \R,k=1,\ldots,m$.
Under the above assumptions, with $X^{\rm 2nd}(t)$ denoting the position of the second class particle at time $t$ starting from the origin, we have
\begin{equation}\label{eq3.8b}
\begin{aligned}
&\lim_{t\to\infty} \Pb\left(\bigcap_{k=1}^m\Big\{X^{\rm 2nd}(t+\tau_k t^{2/3})\geq v_s (t+\tau_k t^{2/3}) + s_k t^{1/3}\Big\}\right) \\
&=\Pb\left(\bigcap_{k=1}^m \{{\cal H}^-((\lambda-\rho)\tau_k)-{\cal H}^+((\rho-\lambda)\tau_k)\geq 2(\rho-\lambda)s_k\}\right),
\end{aligned}
\end{equation}
where the processes ${\cal H}^+$ and ${\cal H}^-$ are \emph{independent}.
\end{thm}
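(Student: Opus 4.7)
My plan is to start from the identity in Theorem~\ref{thmDistrSecondClass}, convert it to the characteristic coordinates of assumption~(c), and use assumption~(b) to promote the two separate convergences in (c) to a joint product-law limit. Fix $t_k=t+\tau_k t^{2/3}$ and $x_k=v_s t_k+s_k t^{1/3}$; then Theorem~\ref{thmDistrSecondClass} rewrites the left-hand side of~\eqref{eq3.8b} as
\begin{equation*}
\Pb\Big(\bigcap_{k=1}^m\{h^-(x_k,t_k)\le\tilde h^+(x_k,t_k)\}\Big),
\end{equation*}
so the problem reduces to the joint asymptotics of $(h^-(x_k,t_k),\tilde h^+(x_k,t_k))_{k=1}^m$.

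Next I would match each $x_k$ with the two parametrizations appearing in (c). Writing $x_k=v_s t+u^- t^{2/3}+(1-2\lambda)\tau_k t^{2/3}=v_s t+u^+ t^{2/3}+(1-2\rho)\tau_k t^{2/3}$ and solving gives
\begin{equation*}
u_k^-=(\lambda-\rho)\tau_k+s_k t^{-1/3},\qquad u_k^+=(\rho-\lambda)\tau_k+s_k t^{-1/3},
\end{equation*}
both of which remain bounded as $t\to\infty$. Substituting the expansions of (c) into the inequality $h^-(x_k,t_k)\le\tilde h^+(x_k,t_k)$ and using the identity $\chi_+-\chi_-=(\rho-\lambda)v_s$ to cancel the $\Or(t^{2/3})$ contributions, the deterministic difference collapses to
\begin{equation*}
{\bf h}^-(u_k^-,\tau_k t^{2/3},t)-{\bf h}^+(u_k^+,\tau_k t^{2/3},t)=2(\rho-\lambda)s_k t^{1/3}.
\end{equation*}
Dividing by $t^{1/3}$, the inequality becomes ${\cal H}^-(u_k^-)-{\cal H}^+(u_k^+)\ge 2(\rho-\lambda)s_k+o(1)$, and the continuity in $u$ postulated in (c) lets me send $u_k^-\to(\lambda-\rho)\tau_k$ and $u_k^+\to(\rho-\lambda)\tau_k$, recovering the event appearing on the right-hand side of~\eqref{eq3.8b}.

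The genuine obstacle is to upgrade the two separate convergences in (c) to a joint convergence whose limit is a product, i.e.\ to show that ${\cal H}^-$ and ${\cal H}^+$ are independent. This is where (b) enters. With probability tending to $1$ the backwards geodesic from $(v_s t,t)$ for $h^-$ lands at time $0$ in $\{x\le-\delta t\}$, while the corresponding geodesic for $\tilde h^+$ lands in $\{x\ge\delta t\}$. Combining this with the min-representation~\eqref{minid} and the $\Or(t^{2/3})$ transversal fluctuation scale of KPZ geodesics (which is $o(\delta t)$ on macroscopic time scales), I would argue that $h^-(x_k,t_k)$ is, up to negligible error, determined by the initial data and Poisson clocks inside a narrow tube around $\x^-$ confined to the left half-line, and symmetrically that $\tilde h^+(x_k,t_k)$ only feels the right half-line. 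A standard slab/restriction argument then produces surrogate height functions $\hat h^-$ and $\hat h^+$ built from disjoint collections of clocks and initial data, and with the same KPZ limits as $h^-$ and $\tilde h^+$; since $\hat h^-$ and $\hat h^+$ are \emph{exactly} independent, their joint limit is a product law and the claimed factorization follows. The main technical effort lies in making this localization uniform across the finite family $(x_k,t_k)_{k=1}^m$, which requires applying (b) not only at $(v_s t,t)$ but across a full $\Or(t^{2/3})\times\Or(t^{2/3})$ space-time window around the shock so that the independent surrogates cover all indices $k$ simultaneously.
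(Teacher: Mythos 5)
Your skeleton is the right one and largely parallels the paper: reduce via Theorem~\ref{thmDistrSecondClass}, do the deterministic bookkeeping in characteristic coordinates, and obtain independence from assumption (b) by localizing backwards geodesics away from the shock ray. But there is a genuine gap: you never move the observation points off the times $t+\tau_kt^{2/3}$ and away from the shock ray, i.e.\ you omit the slow decorrelation step, and without it two things break. First, assumption (c) is a limit for the spatial process $u\mapsto\cdots$ at a \emph{single} (possibly $t$-dependent) time shift $\theta$; it yields the marginal law at each $A^k=(v_st_k+s_kt^{1/3},t_k)$, but not the joint law over $k$, because the $A^k$ sit at different times $\theta_k=\tau_kt^{2/3}$. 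Your step ``dividing by $t^{1/3}$, the inequality becomes ${\cal H}^-(u_k^-)-{\cal H}^+(u_k^+)\geq 2(\rho-\lambda)s_k+o(1)$'' jointly in $k$ silently assumes a space-time functional limit which is not among the hypotheses. The paper resolves this with Theorem~\ref{thmSlowDec} (Propositions~\ref{propSlowDecminus} and~\ref{propSlowDecplus}): along the characteristic directions $1-2\lambda$, resp.\ $1-2\rho$, the height at $A^k$ equals the height at $B^k_\mp$ of \eqref{eqABbbis}, all taken at the \emph{common} time $t-t^\nu$ with $\nu\in(2/3,1)$, up to a deterministic shift plus $o(t^{1/3})$; at that common time the joint-in-$k$ convergence is exactly what (c) provides, since the $B^k_-$ correspond to $u=(\lambda-\rho)\tau_k$ and the $B^k_+$ to $u=(\rho-\lambda)\tau_k$.

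Second, your independence argument fails as stated at the top of the tubes. The points $A^k$ lie within $\Or(t^{1/3})$ of the shock ray $\{x=v_s\ell\}$ (to its right when $s_k>0$), while transversal fluctuations of geodesics are $\Or(t^{2/3})$, so no tube around a geodesic ending at $A^k$ can be confined to one side of the ray with probability tending to one; $h^-(A^k)$ and $\tilde h^+(A^k)$ necessarily share the Poisson clocks in a common neighborhood of the ray near time $t$, and the claim that the surrogates $\hat h^\pm$ agree with $h^-,\tilde h^+$ up to negligible error is precisely what needs proof there. The decoupling becomes exact only after replacing $A^k$ by $B^k_\mp$, which are at distance of order $(\rho-\lambda)t^\nu\gg t^{2/3}$ from the ray: Proposition~\ref{propLocalization} (which derives uniform control from (b) applied at a single point $D$ together with the ordering Lemmas~\ref{lemGeodesics} and~\ref{lemOrderingGeoStepIC} and Proposition~\ref{PropLocStepIC} --- this also answers your concern about needing (b) on a whole space-time window) shows the backwards paths from $B^k_\mp$ stay in the disjoint regions ${\cal D}^\mp$, so the restricted variables $Z^{\pm}_{{\cal D}^{\pm}}(B^k_\pm)$ coincide with $Z^\pm(B^k_\pm)$ with probability tending to one and are independent by construction, while the error committed in passing from $A^k$ to $B^k_\mp$ is again exactly the slow decorrelation estimate. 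The missing intermediate step is thus essential to both halves of your argument.
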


Let us illustrate Theorem~\ref{thmMain} in a concrete example. It is sometimes useful to describe a TASEP configuration in terms of the position of its labelled particles (see Appendix~\ref{AppFlat}). We denote by $X_t(n)$ the position of particle $n$ at time $t$ and use the right-to-left convention, namely $X_t(n+1)< X_t(n)$ for all $n$ and $t$. The initial condition with non-random densities $\lambda$ (resp.\ $\rho$) to the left (resp.\ right) of the origin can be then described by the initial condition
\begin{equation}\label{eq5.5}
X_0(n)=\left\{\begin{array}{ll}
-\lfloor n/\lambda\rfloor, &n\geq 1,\\
-\lfloor n/\rho\rfloor,& n\leq 0.
\end{array}\right.
\end{equation}

\begin{cor}\label{truecor}
Let $X^{\rm 2nd}(t)$ denote the position of the second class particle at time $t$ starting from the origin with initial condition \eqref{eq5.5}. Then we have
\begin{equation}
\begin{aligned}
&\lim_{t\to\infty} \Pb\left(\bigcap_{k=1}^m\Big\{X^{\rm 2nd}(t+\tau_k t^{2/3})\geq v_s (t+\tau_k t^{2/3}) + s_k t^{1/3}\Big\}\right) \\
&=\Pb\left(\bigcap_{k=1}^m \{{\cal H}^-((\lambda-\rho)\tau_k)-{\cal H}^+((\rho-\lambda)\tau_k)\geq 2(\rho-\lambda)s_k\}\right),
\end{aligned}
\end{equation}
where
\begin{equation}
{\cal H}^+(u)=\chi_+^{2/3} 2^{4/3} \widetilde {\cal A}_1(\chi_+^{-1/3}2^{-5/3}u),\quad {\cal H}^-(u)=\chi_-^{2/3} 2^{4/3} {\cal A}_1(\chi_-^{-1/3}2^{-5/3}u)
\end{equation}
are \emph{independent} rescaled Airy$_1$ processes (see~\cite{BFPS06,Sas05} for the definition of the Airy$_1$ process and Appendix~\ref{AppFlat} for how the it appears as a scaling limit of TASEP for generic particle density in $(0,1)$).
\end{cor}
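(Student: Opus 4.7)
The plan is to deduce Corollary~\ref{truecor} from Theorem~\ref{thmMain} by verifying the three assumptions (a)--(c) of Section~\ref{SectAssumptions} for the deterministic initial condition~\eqref{eq5.5}, and then reading off the form of the limit processes. Assumption~(a) is immediate: from~\eqref{eq5.5}, $X_0(n)\sim -n/\lambda$ as $n\to+\infty$ gives asymptotic density $\lambda$ on $\Z_{<0}$, and analogously density $\rho$ on $\Z_{\geq 0}$, hence the required slopes $1-2\lambda$ on the left and $1-2\rho$ on the right of $h(\cdot,0)$.

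The substantive content lies in assumption~(c). By construction, $h^-(\cdot,0)$ coincides with the deterministic periodic density-$\lambda$ configuration on $\Z_{<0}$ and with the step (slope $+1$) on $\Z_{\geq 0}$, while $\tilde h^+(\cdot,0)$ has the density-$\rho$ periodic pattern on $\Z_{\geq 0}$ and slope $-1$ on $\Z_{<0}$. Along the characteristic from $(v_s t, t)$ of speed $1-2\lambda$, the back-cone that determines the $t^{2/3}$-scale fluctuations of $h^-$ probes only the density-$\lambda$ half of the initial data, so one expects convergence to a rescaled Airy$_1$ process with the non-universal KPZ constants $\chi_-^{2/3}2^{4/3}$ and $\chi_-^{-1/3}2^{-5/3}$ dictated by density $\lambda$. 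The analogous statement for $\tilde h^+$ delivers an independent $\widetilde{\mathcal{A}}_1$ with density-$\rho$ constants, and combined with slow decorrelation at the $t^{\tilde\nu}$ time scale (so that the law along a characteristic is asymptotically independent of $\theta$), this yields~\eqref{eq2.3}. Both inputs are furnished by Appendix~\ref{AppFlat}. Independence of $\mathcal{H}^-$ and $\mathcal{H}^+$ follows because under the basic coupling the backwards geodesics of $h^-$ and $\tilde h^+$ from $(v_s t, t)$ asymptotically sit on disjoint halves of $\Z$, so the limits are functionals of disjoint families of Poisson clocks.

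For assumption~(b), the characteristic speed at density $\varrho$ is $1-2\varrho$, so
\begin{equation}
v_s-(1-2\lambda)=-(\rho-\lambda)<0,\qquad v_s-(1-2\rho)=+(\rho-\lambda)>0.
\end{equation}
Hence the backwards geodesic $\x^-$ of $h^-$ concentrates at $-(\rho-\lambda)t$ and $\x^+$ of $\tilde h^+$ at $+(\rho-\lambda)t$, with transversal fluctuations of order $t^{2/3}$. Choosing any $\delta\in(0,\rho-\lambda)$ then gives $\Pb(\x^-(0)\leq -\delta t,\;\x^+(0)\geq \delta t)\to 1$. This concentration is a standard consequence of the KPZ-scale fluctuation bounds for the step-initial-data building blocks collected in Appendix~\ref{AppStepIC}, together with the variational identity~\eqref{minid}.

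The main obstacle is the process-level part of assumption~(c): the Airy$_1$ convergence for general density $\neq 1/2$ from non-step initial data, together with the slow-decorrelation statement on the $t^{\tilde\nu}$ time scale. This is precisely the content of Appendix~\ref{AppFlat} and is the only genuinely non-trivial input; the remainder of the corollary is then a direct application of Theorem~\ref{thmMain}.
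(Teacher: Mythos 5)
Your plan is correct and follows essentially the same route as the paper: verify assumptions (a)--(c) of Theorem~\ref{thmMain} for the initial data \eqref{eq5.5}, obtaining (b) from the variational identity \eqref{minid} together with the step-initial-condition bounds of Appendix~\ref{AppStepIC} (this is the paper's Proposition~\ref{prop5.3}, proved by a union bound over $\Or(t)$ candidate endpoints), and obtaining (c) from the Airy$_1$ limit for general density of Appendix~\ref{AppFlat} (the paper's Corollary~\ref{corAspecial}). The only details you leave implicit are exactly the ones the paper writes out: the time shift $\theta$ in (c) is absorbed by reparametrizing the constant $a$ in Corollary~\ref{corAfinal} rather than by a separate slow-decorrelation step, the statement for $\tilde h^+$ requires the particle--hole symmetry since the appendix only covers the $h^-$ orientation, and the independence of ${\cal H}^\pm$ need not be re-verified, being part of the conclusion of Theorem~\ref{thmMain}.
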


Theorem~\ref{thmMain} is proven starting with Theorem~\ref{thmDistrSecondClass} and taking the large time limit. To get independence of the processes ${\cal H}^-$ and ${\cal H}^+$ the two key ingredients are slow decorrelation and localization of backwards paths that we discuss below. After that we will complete the proof of Theorem~\ref{thmMain} and Corollary~\ref{corAspecial}.

\begin{rem}
Theorem~\ref{thmMain} is proven under the scaling assumption \eqref{eq2.3}, with $1/3$ exponent for the fluctuations. One could also consider Bernoulli-$\lambda$ initial conditions to the left and Bernoulli-$\rho$ to the right of the origin. In that case, fluctuations are dominated by the ones of the initial condition around the start of the characteristic lines, so the fluctuation exponents will be $1/2$ and the limit processes Brownian motions. Thus one would expect that the limit process is a difference of two independent Brownian motions. As this was proven already in 1994  in~\cite{FF94b} with other methods, we do not purse this here.
\end{rem}

\subsection{Backwards paths}
In~\cite{BF22} an explicit construction of backwards paths having the property \eqref{eq2.2} was given, which we simplify here\footnote{There is a change w.r.t.   Definition~4.1 of~\cite{BF22}, namely in the case of a maximum the backwards path needs to move to one of its neighboring points, which  was assumed but not stated in the proof of Proposition~4.2 of~\cite{BF22}.}.
\begin{defin}\label{DefinBackwardsPaths}
We construct a \emph{backwards path} \mbox{$\{\x(s),s:t\to 0\}$} starting from  $\x(t)=x$ as follows. Let $s$ be the time of the last Poisson event  during $[0,t]$ at position $x-1$ (we set $s=0$ if no such event occurred). Then we define $\x(\ell)=x,s\leq \ell\leq t,$ and $\x(s_{-} )$  is given as follows\footnote{In~\cite{BF22} we said that a Poisson event at $y$ changes the height function at $y$ from a local minimum to a local maximum, while to be consistent with Section~\ref{SectFiniteTimeResult} we have here that the change of height function in $y$ is due to a Poisson event at $y-1$, which is associated to the particle which was at $y-1$.}:(a) If $h(\x(s),s)=h(\x(s),s_-)+2$ (a minimum became a maximum), then $\x(s_-)=\x(s)$. (b) If $h(\x(s),s)=h(\x(s),s_-)$, then $\x(s_-)$ is any nearest neighbor of $\x(s)$ where the height function has value $h(\x(s),s)-1$ (in case that it is a maximum, we have two possible choices), see Figure~\ref{FigBackwards}.
\end{defin}

\begin{figure}
\begin{center}
\includegraphics[height=4cm]{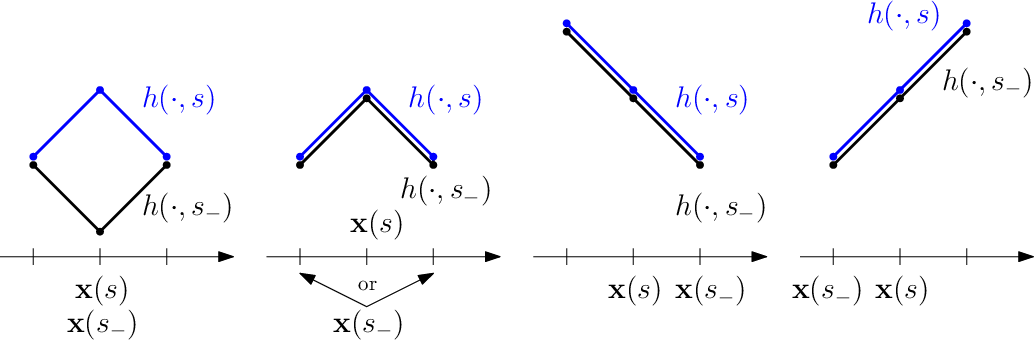}
\caption{Construction of the backwards path as in Definition~\ref{DefinBackwardsPaths}. A Poisson event happens at $(x(s),s)$. The height function at time $s$ is in blue, while the height function just before is in black.}
\label{FigBackwards}
\end{center}
\end{figure}

In Proposition~4.2 of~\cite{BF22} it was shown  that backwards paths are in particular also backwards geodesics. Given a backwards path $\{\x(\tau),\tau:t\to 0\}$ with $\x(t)=x$ for the height function $h$,  for any $\tilde t<t$, the path $\{\tilde \x(\tau),\tau:\tilde t\to 0\}$ with $\tilde\x(\tilde t)=\x(\tilde t)$ is also a backwards path for the height function $h$.

For the localization of the backwards paths we will use two properties about the ordering.
\begin{lem}\label{lemGeodesics}
For a given height function $h$, consider the two rightmost backwards paths $\{\x_1(\tau),\tau:t\to 0\}$ with $\x_1(t)=x_1$ and $\{\x_2(\tau),\tau:t\to 0\}$ with $\x_2(t)=x_2$. If $x_2>x_1$, then $\x_2(\tau)\geq \x_1(\tau)$ for all $\tau \in [0,t]$ under the basic coupling.
\end{lem}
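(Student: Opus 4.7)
The plan is to show that the inequality $\x_1(\tau)\le \x_2(\tau)$ persists as $\tau$ decreases, by processing the driving Poisson events in reverse chronological order starting from $t$. Each backwards path is piecewise constant between events and, by Definition~\ref{DefinBackwardsPaths}, a path currently at position $z$ can only be updated at times of Poisson events at site $z-1$. Hence at each event time $s$, at most the paths whose current position coincides with the event's associated site can change, and it suffices to verify that $\x_1(s)\le \x_2(s)$ implies $\x_1(s_-)\le \x_2(s_-)$, after which the lemma follows by induction over the finitely many events in $[0,t]$.

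Next I would run through the three possible configurations at time $s$ for a Poisson event with associated site $y$. If neither path sits at $y$, neither is touched and the order is trivially preserved. If only one of the two is at $y$, that single path moves to some position in $\{y-1,y,y+1\}$ while the other remains fixed; a direct check, using that $\x_1(s)<\x_2(s)$ together with integrality forces $\x_2(s)\ge y+1$ (respectively $\x_1(s)\le y-1$), shows the updated positions still satisfy the inequality, with equality occurring precisely when the moving path jumps toward the other and they coalesce. If both paths coincide at $y$, they face the same local height profile at the same event, so both are governed by the same branch of Definition~\ref{DefinBackwardsPaths}: in case (a) both stay at $y$, and in case (b) the prescription of moving to the rightmost neighbor of $y$ with height value $h(y,s)-1$ is a deterministic function of $h(\cdot,s)$ near $y$, so the two rightmost selections coincide.

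The main obstacle I anticipate is making transparent that the rightmost rule really depends only on the height profile and the event, not on the individual history of the path, so that two rightmost backwards paths which meet at some backward time must coalesce and travel together thereafter. Once this observation is made explicit, the three-case analysis above shows that no event can flip the order, and the induction over the reversed sequence of Poisson events in $[0,t]$, initialized at the strict inequality $x_1<x_2$ at $\tau=t$, yields $\x_1(\tau)\le \x_2(\tau)$ for every $\tau\in[0,t]$, as claimed.
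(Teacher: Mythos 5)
Your argument is correct and in essence the same as the paper's: both proofs rest on the facts that backwards paths make nearest-neighbor moves, that a.s.\ no two Poisson events occur at the same time, and that two rightmost backwards paths which meet must coalesce for all earlier times because the rightmost rule of Definition~\ref{DefinBackwardsPaths} is a deterministic function of the local height profile and the current event, not of the path's history. The paper simply compresses your event-by-event induction into a first-crossing argument: $\x_2$ cannot jump strictly to the left of $\x_1$ without first coinciding with it at some time $s$, and once $\x_2(s)=\x_1(s)$ the two rightmost paths agree for all $\tau\leq s$, so the order can never be violated.
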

\begin{proof}
Since jumps are nearest neighbor and a.s.\ no two jumps occur at the same time,  $ \x_2$ cannot jump strictly to the left of $\x_1$ without being identical to $\x_1$ at some time $s$. However, if there is a time $s$ such that $\x_2(s)= \x_1(s),$ then
$\x_2(\tau)= \x_1(\tau),\tau\leq s.$ Therefore  we can never have $\x_2(\tau)< \x_1(\tau).$
\end{proof}

A second property that we will use is the ordering of backwards paths with different initial condition starting at the same position.
\begin{lem}\label{lemOrderingGeoStepIC}
Consider an otherwise arbitrary height function $h$  satisfying  $h(x,0)=x$ for $x\geq 0$. Let $h^{\rm step}(x,0)=|x|$ for all $x$. Consider the rightmost backwards paths starting from $x$ at time $t$, where we denote the one for $h$ by $\{\x(\tau),\tau:t\to 0\}$ and the one for $h^{\rm step}$ by $\{\x^{\rm step}(\tau),\tau:t\to 0\}$. Thus $\x(t)=\x^{\rm step}(t)=x$.  Then, for any $\tau\in [0,t]$, $\x(\tau)\leq \x^{\rm step}(\tau)$.
\end{lem}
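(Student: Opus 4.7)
The plan is to reduce the lemma to a particle-level monotonicity and then to a case analysis of the backward jump rule at a common site. From $h(x,0)=x$ for $x\geq 0$ one reads off $\eta_0(x)=0$ for $x\geq 0$, while $h^{\rm step}(x,0)=|x|$ forces $\eta^{\rm step}_0(x)=\Id_{x<0}$; hence $\eta_0\leq \eta^{\rm step}_0$ coordinatewise. Under the basic coupling this ordering propagates, so $\eta_\tau\leq \eta^{\rm step}_\tau$ for all $\tau\in[0,t]$. This coordinatewise monotonicity is the only input from the initial data I would use.

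Next I would run a descending (backward in time) induction with invariant $\x(\tau)\leq \x^{\rm step}(\tau)$, starting from the common initial condition $\x(t)=\x^{\rm step}(t)=x$. The crux is what happens at a common site: if $\x(\tau)=\x^{\rm step}(\tau)=z$, then both backward paths look at the same last Poisson event at $z-1$ in $[0,\tau)$, at some time $s$, and four cases arise according to whether this event is a genuine jump for $h$, for $h^{\rm step}$, for both, or for neither. If both jump, both paths remain at $z$ at $s_-$. If only $h$ jumps, then $\eta^{\rm step}_{s_-}(z-1)=1$ by monotonicity and the failure of $h^{\rm step}$ to jump forces $\eta^{\rm step}_{s_-}(z)=1$; reading off the local gradient of $h^{\rm step}$ at $z$ shows that the only neighbor with height $h^{\rm step}(z,s)-1$ is $z+1$, so $\x^{\rm step}(s_-)=z+1>z=\x(s_-)$. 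If only $h^{\rm step}$ jumps, monotonicity forces $\eta_{s_-}(z)=0$, and since $h$ does not jump one must have $\eta_{s_-}(z-1)=0$; for $h$ the only valid neighbor is then $z-1$, so $\x(s_-)=z-1<z=\x^{\rm step}(s_-)$. If neither jumps, a subcase analysis over the six admissible quadruples $(\eta_{s_-}(z-1),\eta_{s_-}(z),\eta^{\rm step}_{s_-}(z-1),\eta^{\rm step}_{s_-}(z))$ compatible with $\eta\leq\eta^{\rm step}$ and with the non-jump conditions, together with the rightmost tie-breaking rule at local maxima, yields $\x^{\rm step}(s_-)\geq\x(s_-)$ in each subcase.

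Finally I would rule out a crossing once the paths have separated. If $\x(\tau_0)<\x^{\rm step}(\tau_0)$, the two paths are then driven by Poisson events at distinct sites ($\x(\tau)-1$ and $\x^{\rm step}(\tau)-1$) which almost surely never fire simultaneously, and since each backward update is nearest-neighbor any hypothetical later reversal of the inequality would force the two paths first to meet at a common site. But at any such meeting the second paragraph's case analysis applies and excludes an update producing $\x^{\rm step}<\x$. Iterating this argument delivers $\x(\tau)\leq\x^{\rm step}(\tau)$ for all $\tau\in[0,t]$.

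The main obstacle I expect is the case analysis at a common site, especially the subcases of \emph{neither jumps} in which $z$ is a local maximum for one or both height functions and the rightmost tie-breaking rule becomes decisive: correctly identifying in each of the six subcases which neighbors of $z$ carry the value $h^\bullet(z,s)-1$ from the particle data $\eta^\bullet_{s_-}(z-1),\eta^\bullet_{s_-}(z)$ is the only delicate combinatorial point. Steps one and three are essentially formal.
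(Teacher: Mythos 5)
Your proposal is correct and follows essentially the same route as the paper: both rest on the coordinatewise ordering $\eta_\tau\leq\eta^{\rm step}_\tau$ preserved by the basic coupling, plus a case analysis of the backward update at a common site (together with nearest-neighbor moves and a.s.\ non-simultaneous events) to rule out a crossing. The paper merely organizes the case analysis more economically, by isolating the two ``bad'' patterns ($\x$ stepping right while $\x^{\rm step}$ does not, or $\x^{\rm step}$ stepping left while $\x$ does not) and showing each forces an occupation pattern contradicting $\eta\leq\eta^{\rm step}$, whereas you enumerate according to which configuration performs the jump.
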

\begin{proof} At time $t$ we have equality. What we have to rule out is the possibility that there is a $\tau<t$ such that $\x(\tau)=\x^{\rm step}(\tau)$ but either (a) $\x(\tau_-)=\x(\tau)+1$ and $\x^{\rm step}(\tau_-)\leq \x^{\rm step}(\tau)$, or (b) $\x(\tau_-)\geq \x(\tau)$ and $\x^{\rm step}(\tau_-)=\x^{\rm step}(\tau)-1$. We write $x=\x(\tau)=\x^{\rm step}(\tau)$.

In order (a) to happen, we need to have $\eta^{\rm step}_\tau(x)=0$ and $\eta_\tau(x)=1$, while (b) happens if $(\eta^{\rm step}_\tau(x-1),\eta^{\rm step}_\tau(x))=(0,0)$ and $(\eta_\tau(x-1),\eta_\tau(x))\neq (0,0)$. But at time $t=0$, $\eta^{\rm step}_0\geq \eta_0$ coordinate wise and this remains true at any later time by the basic coupling. Thus (a) and (b) can not occur.
\end{proof}

Let us recall one result on the localization of the geodesics for step initial conditions.

\begin{prop}[Proposition 4.9 of~\cite{BF22}]\label{PropLocStepIC}
Consider TASEP with step initial conditions, i.e., $h(x,0)=|x|$. Let $\x(t)=\alpha t$ be the starting position of a backwards geodesics with $\alpha\in (-1,1)$. Then, uniformly for all $t$ large enough,
\begin{equation}
\Pb(|\x(\tau)-\alpha\tau|\leq u t^{2/3}\textrm{ for all }0\leq \tau\leq t)\geq 1-C e^{-c u^2}
\end{equation}
for $u>0$ and  some constants $C,c>0$.
\end{prop}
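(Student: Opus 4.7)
The plan is to reduce to the exponential last passage percolation (LPP) representation of TASEP with step initial condition, where the backwards geodesic of Definition \ref{DefinBackwardsPaths} coincides with a (rightmost) directed up-right LPP geodesic, a correspondence established in Proposition 4.2 of [BF22]. Under this dictionary the starting point $\x(t)=\alpha t$ with $\alpha\in(-1,1)$ corresponds to a lattice vertex $(M,N)$ whose ratio $M/N$ depends only on $\alpha$, and the macroscopic (limit-shape) geodesic from the origin to $(M,N)$ is a straight line which, read back in TASEP coordinates, is exactly $\tau\mapsto\alpha\tau$. So the claim becomes a transversal fluctuation statement for the LPP geodesic about this straight-line characteristic.

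For a fixed intermediate time $\tau\in(0,t)$, I would bound the one-point deviation $\{\x(\tau)-\alpha\tau\geq u t^{2/3}\}$ by splitting the total passage time as a sum of two passage times whose joint endpoint is forced to sit at the shifted location $(\alpha\tau+ut^{2/3},\tau)$, and then comparing with the sum along the unshifted characteristic. A Taylor expansion of the LPP limit shape $(\sqrt{M}+\sqrt{N})^2$ in the transversal direction shows that the shift costs a deterministic $cu^{2}t^{1/3}$, so the event is contained in a deviation of a difference of two correlated passage times by $cu^2 t^{1/3}$. Classical moderate-deviation estimates for exponential LPP (Laguerre/Tracy--Widom tails, as in the references invoked in [BF22]) then yield a bound of the form $Ce^{-cu^{2}}$. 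The symmetric lower deviation is treated by the same argument.

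To promote this one-point bound into a uniform-in-$\tau$ estimate I would fix a finite (O(1)) family of anchor times $0=\tau_0<\tau_1<\dots<\tau_K=t$ chosen so that between consecutive anchors the characteristic does not move more than $ut^{2/3}/2$, and at each anchor apply the previous one-point estimate to the two comparison geodesics starting at $(\alpha t\pm ut^{2/3}/2,t)$. By the ordering Lemma \ref{lemGeodesics} (and Lemma \ref{lemOrderingGeoStepIC} to handle the shape near $(0,0)$) these two geodesics sandwich $\x(\cdot)$ at \emph{every} $\tau$, so a union bound over the $O(1)$ anchor points, which costs only a multiplicative constant, gives the uniform conclusion with constants absorbed into $C,c$.

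The main obstacle is the Gaussian exponent $e^{-cu^{2}}$ in Paragraph 2: a naive appeal to the upper tail of a single passage time only yields $e^{-cu^{3/2}}$, which is insufficient. The Gaussian strength comes from the fact that, in the transversal comparison, one is looking at a \emph{difference} of two strongly correlated passage times along nearly-parallel paths whose leading KPZ fluctuations cancel, leaving only a purely transversal contribution of order $u^{2}t^{1/3}$; implementing this cancellation carefully, in the concrete exponential LPP model, is the delicate ingredient and is precisely where Proposition 4.9 of [BF22] does the bulk of the work.
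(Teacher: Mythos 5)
There is a genuine gap, and it starts with the logical status of your last paragraph: the statement you are asked to prove \emph{is} Proposition 4.9 of~\cite{BF22} (the present paper imports it without proof), and you end by conceding that the ``delicate ingredient'' is ``precisely where Proposition 4.9 of~\cite{BF22} does the bulk of the work.'' That is circular: the one step you identify as hard is deferred to the very result being established, so the proposal does not contain a proof. Moreover, your diagnosis of where the Gaussian exponent comes from is off. A transversal displacement of size $u t^{2/3}$ at an intermediate time costs, by the curvature of the limit shape, a deterministic amount of order $u^{2}t^{1/3}$, i.e.\ $s=u^{2}$ in units of the fluctuation scale $t^{1/3}$; hence a plain union bound over the two concatenated passage times and the direct one, using a one-point moderate-deviation bound of the form $C e^{-cs}$ as in \eqref{eqBoundStepIC}, already yields $e^{-cu^{2}}$ (the upper tails even give $e^{-cu^{3}}$). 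No cancellation between strongly correlated passage times is needed; the actual difficulty is elsewhere, namely in summing over the spatial locations of the deviation and over all times $\tau\in[0,t]$ without picking up $t$-dependent prefactors.

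That uniformization is exactly where your argument would fail. Your ``finite ($O(1)$) family of anchor times'' with consecutive increments of the characteristic at most $u t^{2/3}/2$ requires spacing $\Delta\tau\lesssim u t^{2/3}/|\alpha|$, i.e.\ of order $t^{1/3}/u$ anchors (not $O(1)$) whenever $\alpha\neq 0$; a union bound over that many times produces a factor growing with $t$, which destroys the claimed bound $1-Ce^{-cu^{2}}$ uniformly in large $t$ for fixed $u$. The sandwiching idea does not rescue this: by Lemma~\ref{lemGeodesics} the rightmost backwards paths started at $\alpha t\pm u t^{2/3}/2$ do order themselves around $\x$, but backwards paths (Definition~\ref{DefinBackwardsPaths}) are not monotone in time, so controlling the comparison paths at finitely many anchor times says nothing about the intermediate times; you would need uniform-in-$\tau$ control of those two paths, which is the original problem again. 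Closing this requires a genuinely multi-scale (e.g.\ dyadic-in-time) argument combined with blockwise spatial union bounds, which is the substance of the cited proof in~\cite{BF22} and is absent from your sketch.
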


\begin{figure}
\begin{center}
\includegraphics[height=5cm]{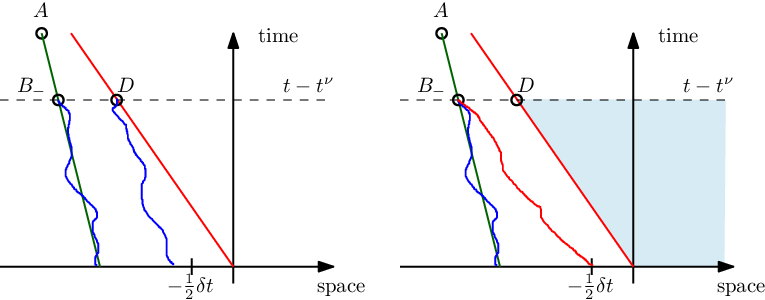}
\caption{Localization strategy: (left) with the same initial conditions, geodesics to $B_-$ are to the left of the right-most geodesics to the point $D=(v_s (t-t^\nu),t-t^\nu)$, which by hypothesis is to the left of $-\tfrac12\delta t$ with high probability. (right) if the geodesics to $B_-$ is to the left of $-\tfrac12\delta t$, then it is to the left of the right-most one with step initial condition to $B_-$, which by hypothesis is not fluctuating more than $\Or(t^{2/3})$, thus it stays to the left of the line $(0,0)$ to $D$.}
\label{FigLocalization}
\end{center}
\end{figure}

In the following statements we will use the space-time points
\begin{equation}\label{eqAB}
\begin{aligned}
A&=(v_s(t+\tau t^{2/3})+s t^{1/3},t+\tau t^{2/3}),\\
B_-&=(v_s (t+\tau t^{2/3})-(1-2\lambda)(\tau t^{2/3}+t^\nu),t-t^\nu),\\
B_+&=(v_s (t+\tau t^{2/3})-(1-2\rho)(\tau t^{2/3}+t^\nu),t-t^\nu),
\end{aligned}
\end{equation}
where $\nu\in (2/3,1)$.
Under the assumptions of Section~\ref{SectAssumptions}, we control the localization of the backwards paths starting at $B_-$ (and similarly to $B_+$).
\begin{prop}\label{propLocalization}
Let $\x^-$ be any backwards paths for $h^-$ starting at $B_-$ and $\x^+$ any backwards paths for $\tilde h^+$ starting at $B_+$. Then, for any $\nu\in (2/3,1)$,
\begin{equation}\label{eq3.7}
\lim_{t\to\infty} \Pb(\x^-(\ell)< v_s\ell\textrm{ for all }0\leq \ell\leq t-t^\nu)=1
\end{equation}
and
\begin{equation}\label{eq3.8}
\lim_{t\to\infty} \Pb(\x^+(\ell)> v_s\ell\textrm{ for all }0\leq \ell\leq t-t^\nu)=1.
\end{equation}
\end{prop}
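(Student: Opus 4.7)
By the symmetry between $h^-$ and $\tilde h^+$, it suffices to prove \eqref{eq3.7}; \eqref{eq3.8} follows by the same argument. By Lemma~\ref{lemGeodesics}, every backwards path from $B_-$ for $h^-$ is dominated from above by the rightmost one, so it is enough to control this rightmost backwards path $\x^-$. The plan is to combine two high-probability bounds: an \emph{endpoint bound} $\x^-(0)\leq -\tfrac{\delta}{2}t$ and a \emph{tube bound} obtained by comparison with step initial data.

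For the endpoint bound, introduce the shock-characteristic point $D=(v_s(t-t^\nu),t-t^\nu)$. The elementary identity $v_s-(1-2\lambda)=\lambda-\rho<0$ gives $B_-<v_s(t-t^\nu)$, so Lemma~\ref{lemGeodesics} applied to the common initial profile $h^-$ yields $\x^-(\ell)\leq \x^-_D(\ell)$ for all $\ell\in[0,t-t^\nu]$, where $\x^-_D$ is the rightmost backwards path for $h^-$ starting from $D$. Applying assumption~(b) with $t$ replaced by $s:=t-t^\nu\to\infty$ then gives $\x^-_D(0)\leq -\delta s\leq -\tfrac{\delta}{2}t$ with high probability.

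For the tube bound, I use that $h^-(x,0)=x$ on $\{x\geq 0\}$ matches step initial data there. Lemma~\ref{lemOrderingGeoStepIC} (with the starting time trivially shifted from $0$ to $t-t^\nu$) gives $\x^-(\ell)\leq \x^{\rm step}(\ell)$ for all $\ell\in[0,t-t^\nu]$, where $\x^{\rm step}$ is the rightmost backwards path for step initial data $h^{\rm step}(x,0)=|x|$ starting at $B_-$ at time $t-t^\nu$. Since $B_-/(t-t^\nu)\to v_s\in(-1,1)$, Proposition~\ref{PropLocStepIC} yields, uniformly in $\ell\in[0,t-t^\nu]$,
\begin{equation*}
\x^{\rm step}(\ell)\;\leq\;\frac{B_-\,\ell}{t-t^\nu}+u\, t^{2/3}
\end{equation*}
with probability at least $1-Ce^{-cu^2}$.

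The key computation combining the two bounds is
\begin{equation*}
v_s\ell-\frac{B_-\,\ell}{t-t^\nu}\;=\;\frac{\ell\,(\rho-\lambda)(\tau t^{2/3}+t^\nu)}{t-t^\nu}\;\geq\; c\,\ell\, t^{\nu-1}
\end{equation*}
for $t$ large, so the tube bound alone yields $\x^-(\ell)<v_s\ell$ whenever $\ell\geq Cu\,t^{5/3-\nu}$. Since $\nu>2/3$, the threshold $t^{5/3-\nu}=o(t)$ leaves only a short residual window near $\ell=0$. In this window, the backwards path moves at most $O(\ell)$ between times $0$ and $\ell$ (its jumps are stochastically dominated by a Poisson variable of rate two), so the endpoint bound yields $\x^-(\ell)\leq -\tfrac{\delta}{2}t+O(t^{5/3-\nu})<v_s\ell$ for $t$ large. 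Letting $u=(\log t)^{1/2}$ sends all error probabilities to zero while preserving the scale separation $ut^{2/3}\ll t^\nu$. The only delicate piece is thus the correct scheduling of the three scales $t^{2/3}\ll t^\nu\ll t$; everything else is monotone comparison via Lemmas~\ref{lemGeodesics}--\ref{lemOrderingGeoStepIC} and the input from (b) and Proposition~\ref{PropLocStepIC}.
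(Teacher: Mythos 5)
Your argument shares most of its ingredients with the paper's proof (reduction to the rightmost path, Assumption (b) applied at the shifted point $D=(v_s(t-t^\nu),t-t^\nu)$ combined with the ordering Lemma~\ref{lemGeodesics}, comparison with step initial data via Lemma~\ref{lemOrderingGeoStepIC}, and the localization Proposition~\ref{PropLocStepIC}), but the endgame is genuinely different. The paper uses the high-probability event from (b) to replace the initial data of $h^-$ by its truncation to $\{x\le -\delta t/2\}$ and anchors the step comparison at $-\delta t/2$; the straight line from $(-\delta t/2,0)$ to $B_-$ then stays to the left of the characteristic line by a margin of order $t^\nu\gg t^{2/3}$ \emph{uniformly in} $\ell$, so a single application of Proposition~\ref{PropLocStepIC} settles all times at once. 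You instead anchor the step comparison at the origin (legitimate, since $h^-(x,0)=x$ for $x\ge 0$), which buys you the advantage of not having to justify the paper's substitution of the truncated initial data; the price is that the line from $(0,0)$ to $B_-$ merges with the characteristic line as $\ell\to 0$, so your comparison only covers $\ell\gtrsim u\,t^{5/3-\nu}$ and you must patch the residual window with the endpoint bound plus a finite-speed estimate for the backwards path. Your scale bookkeeping ($u t^{2/3}\ll t^\nu\ll t$, $u=(\log t)^{1/2}$) is consistent.

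The finite-speed estimate is the one place where your write-up is not yet a proof. The claim that the jumps of the backwards path over a window of length $L$ are ``stochastically dominated by a Poisson variable of rate two'' is not immediate: the path reads the Poisson clocks adaptively (which clock is queried next depends on the height function, hence on clocks already read), so plain domination requires an argument, and a crude space-time box count does not give smallness when $L\gg 1$. The statement you need is nonetheless true and has a short direct proof: if $\x^-(0)\le-\tfrac{\delta}{2}t$ but $\x^-(\ell)\ge v_s\ell\ge -|v_s|L$ for some $\ell\le L$, then for every integer $z$ in the deterministic interval $[-\tfrac{\delta}{2}t,\,-|v_s|L]$ consider the last (in backward time) downward crossing from $z+1$ to $z$; by Definition~\ref{DefinBackwardsPaths} each such crossing occurs at a Poisson event at site $z$, and these last-crossing times are strictly increasing in $z$. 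Hence the bad event forces an increasing (in space and time) sequence of Poisson events across $k\ge \tfrac{\delta}{4}t$ consecutive sites inside a time window of length $L=O(u\,t^{5/3-\nu})=o(t)$, whose probability is at most $L^k/k!\le (eL/k)^k\to 0$. With this (or any equivalent finite-speed bound) inserted in place of the domination claim, your proof is complete and constitutes a valid alternative to the paper's argument.
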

\begin{proof}
We prove \eqref{eq3.7} about backward paths  for $h^-$ starting at $B_-$, the proof of \eqref{eq3.8} is analogous.   We call $\x^{-}_1$   the rightmost backwards path of $h^-$ starting at $D=(v_s (t-t^\nu),t-t^\nu)$
and  $\x^{-}_2$   the rightmost backwards path of $h^-$ starting at $B_-$, see Figure~\ref{FigLocalization} (left). Denote by $G$ the event that  $\x^{-}_1$ stays to the left of $-\tfrac12\delta t$. By Assumption (b), since $t-t^\nu\geq t/2$ for all $t$ large enough, we have that any geodesics, thus also  $\x^{-}_1$ stays to the left of $-\tfrac12\delta t$  with high probability. Note that the $x-$coordinate of $B_-$ lies to the left of the $x-$coordinate of $D$. Thus, by the order of backwards paths (see Lemma~\ref{lemGeodesics}), $\x^{-}_2$ stays to the left $\x^{-}_1$, as in Figure~\ref{FigLocalization} (left).

Let now $\tilde{\x}^{-}_2$  be the rightmost backwards path starting at $B_-$ with initial configuration given by the height function of the particle configuration $\eta^{-}\mathbf{1}_{x \leq -\delta t/2}.$ This means that    $\tilde{\x}^{-}_2$   differs from
$\x^{-}_2 $ by having no particles to the right of $-\delta t/2$ initially. However, on the event $G$, we have precisely $\x^{-}_2 =\tilde{\x}^{-}_2$.

Noting that up to a spatial shift by $-\delta t/2$ we are exactly in the setting of Lemma~\ref{lemOrderingGeoStepIC}. Applying  Lemma~\ref{lemOrderingGeoStepIC} to $ \tilde{\x}^{-}_2$ we get that the rightmost backwards path starting at $B_-$ with step initial condition $h^{\rm step}_{-\delta t/2,0}(\cdot,0)$ stays to the right of $ \tilde{\x}^{-}_2$ and thus, on the event $G$, to the right of $ \x^{-}_2$.   By Proposition~\ref{PropLocStepIC}, the rightmost backwards path starting at $B_-$ with step initial condition $h^{\rm step}_{-\delta t/2,0}(\cdot,0)$ has $\mathcal{O}(t^{2/3})$ fluctuations around the straight line joining $(-\delta t/2,0)$ and $B_-$. In particular, since $\nu>2/3$, it stays to the left of the line $\{(v_s \ell,\ell),0\leq \ell\leq t-t^\nu\}$ with high probability, see Figure~\ref{FigLocalization} (right).

In particular, \eqref{eq3.7} is true for the rightmost backwards path starting at $B_-$ with step initial condition $h^{\rm step}_{-\delta t/2,0}(\cdot,0).$
 Hence, on the event $G$, \eqref{eq3.7} is true for $ \x^{-}_2$. By assumption (b), the probability of $G$ converges to $1$, hence \eqref{eq3.7} is true for $ \x^{-}_2$ unconditionally. Since $ \x^{-}_2$ is the rightmost backwards path, \eqref{eq3.7} then applies to all backwards path.

  \end{proof}

\subsection{Slow decorrelation}
This result is an analogue to Theorem 3.2 of~\cite{CFP10b}, specialized to our setting.
\begin{thm}\label{thmSlowDec}
Let $v\in\R$, $w\in\R$ and $\tilde\nu<1$ such that
\begin{equation}
\begin{aligned}
&\frac{h(v T,T)-\mu_1(T)}{T^{1/3}}\Rightarrow D,\textrm{ as }T\to\infty,\\
&\frac{h(v T-w T^{\tilde \nu},T-T^{\tilde \nu})-\mu_2(T)}{T^{1/3}}\Rightarrow D,\textrm{ as }T\to\infty,\\
&\frac{h^{\rm step}_{v T-w T^{\tilde \nu},T-T^{\tilde \nu}}(v T,T)-(\mu_1(T)-\mu_2(T))}{T^{1/3}}\Rightarrow 0,\textrm{ as }T\to\infty,
\end{aligned}
\end{equation}
for some distribution $D$ and some $\mu_1(T),\mu_2(T)$. Then, for any $\e>0$,
\begin{equation}
\lim_{T\to\infty}\Pb(|h(v T,T)-h(v T-w T^{\tilde \nu},T-T^{\tilde \nu})-(\mu_1(T)-\mu_2(T))|\geq \e T^{1/3})=0.
\end{equation}
\end{thm}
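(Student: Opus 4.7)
The plan is to exploit the min identity \eqref{minid} and a classical ``same marginal plus ordering implies equality'' coupling argument.

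First, applying \eqref{minid} at the intermediate time $\tau = T - T^{\tilde\nu}$ and retaining only the single term $y = vT - wT^{\tilde\nu}$ in the minimum yields the one-sided bound
\begin{equation*}
h(vT,T) \le h(vT - wT^{\tilde\nu}, T - T^{\tilde\nu}) + h^{\rm step}_{vT - wT^{\tilde\nu}, T - T^{\tilde\nu}}(vT,T).
\end{equation*}
Subtracting $\mu_1(T)$, adding and subtracting $\mu_2(T)$, and dividing by $T^{1/3}$ gives
\begin{equation*}
A_T - B_T \le C_T,
\end{equation*}
where $A_T$, $B_T$, $C_T$ denote the three rescaled quantities appearing in the three weak-convergence hypotheses, so by assumption $A_T \Rightarrow D$, $B_T \Rightarrow D$ and $C_T \Rightarrow 0$.

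Second, I would upgrade this one-sided inequality to the desired two-sided collapse via a tightness-plus-subsequence argument. Since each marginal converges weakly, the triple $(A_T, B_T, C_T)$ is tight, so from any subsequence we can extract a further subsequence along which it converges jointly in distribution to some limit $(A, B, 0)$ with $A$ and $B$ each distributed as $D$. The almost sure inequality $A_T - B_T \le C_T$ passes to the limit as $A \le B$ a.s.\ (noting that the limit of $C_T$ is the deterministic $0$). Now two random variables with the same marginal distribution which are a.s.\ ordered are necessarily a.s.\ equal: apply any bounded strictly increasing $\varphi$ and compare $\E[\varphi(A)] = \E[\varphi(B)]$ to conclude $\varphi(A) = \varphi(B)$ a.s. Hence $A = B$ a.s., so every subsequential limit of $A_T - B_T$ is identically zero, which means $A_T - B_T \to 0$ in probability. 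Unwinding the normalization gives exactly the claim.

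I do not expect any serious obstacle; the structure is the same Prokhorov-coupling argument as in the proof of Theorem~3.2 of~\cite{CFP10b}, and the only point that deserves care is the verification that the a.s.\ inequality passes to the joint subsequential limit, which it does because the inequality holds pathwise and the third coordinate converges to the constant $0$.
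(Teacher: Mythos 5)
Your proof is correct and takes essentially the approach the paper intends: the paper gives no written argument for Theorem~\ref{thmSlowDec}, deferring to Theorem~3.2 of~\cite{CFP10b}, whose proof is exactly your one-sided bound obtained by keeping the single term $y=vT-wT^{\tilde\nu}$ in \eqref{minid}, combined with the fact that both rescaled heights have the same limit law while the bridging step term is $o_p(T^{1/3})$. Your completion via tightness, subsequential joint limits, portmanteau for the closed set $\{a-b\le c\}$, and the ``equal marginals plus a.s.\ ordering implies a.s.\ equality'' step is a correct mild repackaging of the distribution-function lemma used in~\cite{CFP10b}.
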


We apply slow decorrelation to the setting of Figure~\ref{FigSlowDec}.
\begin{figure}
\begin{center}
\includegraphics[height=5cm]{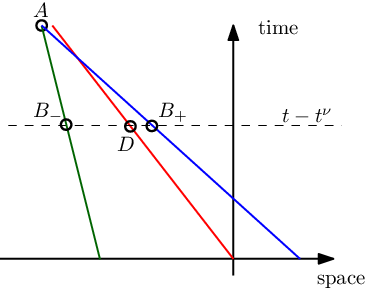}
\caption{The setting for the application of slow decorrelation. The red line is the macroscopic trajectory of the shock.}
\label{FigSlowDec}
\end{center}
\end{figure}
Assumption (c) tells us that the limiting distributions of the scaled height functions at space-time points $A$ and $B_-$ are the same. Slow decorrelation implies that they are asymptotically very correlated since the probability of their difference goes to zero, see Proposition~\ref{propSlowDecminus}.
\begin{prop}\label{propSlowDecminus}Let $\nu\in(2/3,1)$, $A$ and $B_-$ as in \eqref{eqAB}. Then, for any $\e>0$,
\begin{equation}
\Pb\left(\frac{|h^-(A)-h^-(B_-)-((1-2\chi_-)(\tau t^{2/3}+t^\nu)+(1-2\lambda) s t^{1/3})|}{t^{1/3}}\geq \e\right)\to 0
\end{equation}
as $t\to\infty$.
\end{prop}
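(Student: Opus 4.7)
The plan is to apply the slow decorrelation result Theorem~\ref{thmSlowDec} to the height function $h^-$ at the pair of space-time points $A$ and $B_-$ from \eqref{eqAB}, with centerings given by the macroscopic values in Assumption~(c). I first verify that the subtracted constant in the proposition coincides with this macroscopic difference. Writing $A$ in the form $(v_s t+u t^{2/3}+(1-2\lambda)\theta,\,t+\theta)$ yields $\theta_A = \tau t^{2/3}$ and $u_A = (\lambda-\rho)\tau + s\,t^{-1/3}$, while $B_-$ yields $\theta_B = -t^\nu$ and $u_B = (\lambda-\rho)\tau$. Substituting into \eqref{eq2.4} and simplifying gives
\begin{equation*}
{\bf h}^-(u_A,\theta_A,t) - {\bf h}^-(u_B,\theta_B,t) = (1-2\chi_-)(\tau t^{2/3}+t^\nu) + (1-2\lambda)\,s\,t^{1/3},
\end{equation*}
matching the centering in the proposition exactly.

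The first two hypotheses of Theorem~\ref{thmSlowDec} follow directly from Assumption~(c). Taking any $\tilde\nu\in(\nu,1)$, we have $|\theta_A|,|\theta_B|\leq t^{\tilde\nu}$ for $t$ large; since $u_A \to (\lambda-\rho)\tau = u_B$ and the distribution of ${\cal H}^-$ is continuous in $u$, both $(h^-(A)-{\bf h}^-(u_A,\theta_A,t))/(-t^{1/3})$ and $(h^-(B_-)-{\bf h}^-(u_B,\theta_B,t))/(-t^{1/3})$ converge weakly to the common limit ${\cal H}^-((\lambda-\rho)\tau)$.

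The third hypothesis requires analysis of the step initial condition height function $h^{\rm step}_{x_{B_-},T_B}(x_A,T_A)$. The elapsed time is $T_A - T_B = \tau t^{2/3}+t^\nu = (1+o(1))t^\nu$ and the direction from $B_-$ to $A$ has slope $\alpha = (1-2\lambda) + O(t^{1/3-\nu})\in(-1,1)$, strictly inside the rarefaction fan. A Taylor expansion of the macroscopic step-IC height function at direction $\alpha$, which equals $\tfrac{1+\alpha^2}{2}(T_A-T_B)$, produces $(1-2\chi_-)(T_A-T_B) + (1-2\lambda)\,s\,t^{1/3} + o(1)$, agreeing with the centering up to $o(t^{1/3})$. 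Moreover, since the step IC has KPZ fluctuations of order $(T_A-T_B)^{1/3} = O(t^{\nu/3}) = o(t^{1/3})$ (from the Airy$_2$ asymptotics recalled in Appendix~\ref{AppStepIC}), the third hypothesis follows.

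Applying Theorem~\ref{thmSlowDec} then yields the proposition. The main obstacle is this last step: one must combine the macroscopic expansion of $\tfrac{1+\alpha^2}{2}$ with a uniform-in-$t$ fluctuation bound for the step initial condition, and carefully track the transverse shift $s\,t^{1/3}$ and the $\tau t^{2/3}$ correction inside $T_A - T_B$, which are both subdominant compared to the dominant time gap $t^\nu$. Once these ingredients are in place the conclusion is immediate.
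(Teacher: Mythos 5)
Your proposal is correct and follows essentially the same route as the paper: apply Theorem~\ref{thmSlowDec} with the first two hypotheses supplied by Assumption~(c) at $(\theta,u)=(\tau t^{2/3},(\lambda-\rho)\tau+st^{-1/3})$ and $(-t^\nu,(\lambda-\rho)\tau)$, the third by the step-IC law of large numbers plus $O(t^{\nu/3})=o(t^{1/3})$ fluctuations over the time gap $\tau t^{2/3}+t^\nu$, and a check that the centering equals the difference of the macroscopic terms. Your verification of the centering via ${\bf h}^-(u_A,\theta_A,t)-{\bf h}^-(u_B,\theta_B,t)$ and the expansion $\tfrac12(1+\alpha^2)=1-2\chi_-+(1-2\lambda)st^{1/3}/T'+O(t^{2/3-2\nu}\cdot t^{\nu})$ matches the paper's identity $H_A=H_{B_-}+(1-2\chi_-)(\tau t^{2/3}+t^\nu)+(1-2\lambda)st^{1/3}$.
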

\begin{proof}
We apply Theorem~\ref{thmSlowDec} with $T=t+\tau t^{2/3}$, $v=v_s+s t^{1/3}/T$, $T^{\tilde\nu}=t^\nu+\tau t^{2/3}$ and $w=1-2\lambda+s t^{1/3}/(\tau t^{2/3}+t^\nu)$. Let us verify that the assumptions are satisfied.

(a) By Assumption (c) with $\theta=\tau t^{2/3}$ and $u=(\lambda-\rho)\tau+s t^{-1/3}$ we get
\begin{equation}\label{eq3.6}
\lim_{t\to\infty}\frac{h^-(A)-H_A}{-t^{1/3}}={\cal H}^-((\lambda-\rho)\tau),
\end{equation}
where $H_A=\mu_s (t+\tau t^{2/3})+(1-2\lambda)s t^{1/3}$.

(b) By Assumption (c) with $\theta=-t^\nu$ and $u=(\lambda-\rho)\tau$ we get
\begin{equation}\label{eq2.9}
\lim_{t\to\infty}\frac{h^-(B_-)-H_{B_-}}{-t^{1/3}}={\cal H}^-((\lambda-\rho)\tau),
\end{equation}
 where $H_{B_-}=\mu_s t-(1-2\lambda)(\rho-\lambda)\tau t^{2/3}-(1-2\chi_-)t^\nu$.

(c) Let $C_-=A-B_-=((1-2\lambda)(\tau t^{2/3}+t^\nu)+s t^{1/3},\tau t^{2/3}+t^\nu)$, then
\begin{equation}
\lim_{t\to\infty}\frac{h^{\rm step}(C_-)-(1-2\chi_-)(\tau t^{2/3}+t^\nu)-(1-2\lambda) s t^{1/3}}{-t^{1/3}} =0,
\end{equation}
since if we divide by $t^{\nu/3}$ is converges to a (scaled) GUE Tracy-Widom distribution function and $t^{\nu/3-1/3}\to 0$ as $t\to\infty$.

Finally one verifies the identity
\begin{equation}
H_A=H_{B_-}+(1-2\chi_-)(\tau t^{2/3}+t^\nu)+(1-2\lambda) s t^{1/3},
\end{equation}
which then ensures that we can apply Theorem~\ref{thmSlowDec}.
\end{proof}

\begin{prop}\label{propSlowDecplus}
Let $\nu\in(2/3,1)$, $A$ and $B_+$ as in \eqref{eqAB}. Then, for any $\e>0$,
\begin{equation}
\Pb\bigg(\frac{|\tilde h^+(A)-\tilde h^+(B_+)-((1-2\chi_+)(\tau t^{2/3}+t^\nu)+(1-2\rho) s t^{1/3})|}{t^{1/3}}\geq \e\bigg)\to 0
\end{equation}
as $t\to\infty$.
\end{prop}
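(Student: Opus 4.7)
The statement is the exact mirror of Proposition~\ref{propSlowDecminus}, with $\lambda$ and $h^{-}$ replaced by $\rho$ and $\tilde h^{+}$ (and consequently $B_{-}$, $\chi_{-}$ by $B_{+}$, $\chi_{+}$). My plan is therefore to apply Theorem~\ref{thmSlowDec} with the very same structure as before, the only genuine change being that the backward displacement from $A$ to $B_{+}$ now follows the characteristic of slope $1-2\rho$ instead of $1-2\lambda$, and the distributional input is taken from the \emph{second} line of Assumption~(c) rather than the first.

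Concretely, I would invoke Theorem~\ref{thmSlowDec} with $T=t+\tau t^{2/3}$, $v=v_{s}+s t^{1/3}/T$, $T^{\tilde\nu}=t^{\nu}+\tau t^{2/3}$, and $w=1-2\rho+s t^{1/3}/(\tau t^{2/3}+t^{\nu})$, so that $vT=A_{x}$ and $vT-w T^{\tilde\nu}=(B_{+})_{x}$. To verify input (a) at $A$, I apply Assumption~(c) with $\theta=\tau t^{2/3}$ and $u=(\rho-\lambda)\tau+s t^{-1/3}$, obtaining by continuity of $\mathcal{H}^{+}$ that $(\tilde h^{+}(A)-H_{A})/(-t^{1/3})\Rightarrow\mathcal{H}^{+}((\rho-\lambda)\tau)$, where the centering simplifies via the identity $(1-2\rho)(\rho-\lambda)+(1-2\chi_{+})=\mu_{s}$ to
\begin{equation*}
H_{A}=\mu_{s}(t+\tau t^{2/3})+(1-2\rho)s t^{1/3}.
\end{equation*}
For input (b) at $B_{+}$, Assumption~(c) with $\theta=-t^{\nu}$ and $u=(\rho-\lambda)\tau$ gives convergence of $(\tilde h^{+}(B_{+})-H_{B_{+}})/(-t^{1/3})$ to the same $\mathcal{H}^{+}((\rho-\lambda)\tau)$, with
\begin{equation*}
H_{B_{+}}=\mu_{s} t+(1-2\rho)(\rho-\lambda)\tau t^{2/3}-(1-2\chi_{+})t^{\nu}.
\end{equation*}

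For input (c), I would observe that $C_{+}=A-B_{+}=((1-2\rho)(\tau t^{2/3}+t^{\nu})+s t^{1/3},\tau t^{2/3}+t^{\nu})$ has both coordinates of order $t^{\nu}$, so the speed $(C_{+})_{x}/(C_{+})_{t}$ tends to $1-2\rho\in(-1,1)$ and $h^{\rm step}(C_{+})$ falls in the interior of the rarefaction fan. Consequently, after subtracting its deterministic profile $(1-2\chi_{+})(\tau t^{2/3}+t^{\nu})+(1-2\rho)s t^{1/3}$ and rescaling by $((C_{+})_{t})^{1/3}\sim t^{\nu/3}$, it converges to a rescaled GUE Tracy--Widom distribution; dividing instead by $t^{1/3}$ produces the extra factor $t^{\nu/3-1/3}\to 0$ because $\nu<1$, which kills the limit and delivers input (c).

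Finally, the arithmetic identity
\begin{equation*}
H_{A}-H_{B_{+}}=(1-2\chi_{+})(\tau t^{2/3}+t^{\nu})+(1-2\rho)s t^{1/3}
\end{equation*}
matches the centering appearing inside the probability in the proposition, so Theorem~\ref{thmSlowDec} immediately yields the claim. I do not expect any substantive obstacle beyond those already resolved in Proposition~\ref{propSlowDecminus}; the main care is bookkeeping of the different characteristic slope and of the constants $\chi_{+}$ versus $\chi_{-}$, together with checking that $\mathbf{h}^{+}$ from \eqref{eq2.4} is consistent with the slope $1-2\rho$ so that the $\theta=-t^{\nu}$ and $\theta=\tau t^{2/3}$ evaluations yield the same limit variable $\mathcal{H}^{+}((\rho-\lambda)\tau)$, which is exactly the content of Assumption~(c)'s ``constant law along characteristics''.
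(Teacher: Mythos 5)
Your proposal is correct and follows essentially the same route as the paper: the paper's own proof of Proposition~\ref{propSlowDecplus} simply refers back to the proof of Proposition~\ref{propSlowDecminus} with the replacements $\lambda\to\rho$, $h^-\to\tilde h^+$, $\chi_-\to\chi_+$, $B_-\to B_+$, recording exactly the limit \eqref{eq2.13} with your $H_{B_+}$ and the same vanishing step-initial-condition contribution at $C_+=A-B_+$. Your choices of $T$, $v$, $w$, and of $(u,\theta)$ at $A$ and $B_+$, the identity $(1-2\rho)(\rho-\lambda)+(1-2\chi_+)=\mu_s$, and the $t^{\nu/3-1/3}\to 0$ argument all check out.
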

\begin{proof}
The proof is as for Proposition~\ref{propSlowDecminus}, except that now we have
\begin{equation}\label{eq2.13}
\lim_{t\to\infty}\frac{\tilde h^+(B_+)-H_{B_+}}{-t^{1/3}}={\cal H}^+((\rho-\lambda)\tau),
\end{equation}
with $H_{B_+}=\mu_s t+(1-2\rho)(\rho-\lambda)\tau t^{2/3}-(1-2\chi_+)t^\nu$ as well as
and, with $C_+=A-B_+=((1-2\rho)(\tau t^{2/3}+t^\nu)+s t^{1/3},\tau t^{2/3}+t^\nu)$,
\begin{equation}
\lim_{t\to\infty}\frac{h^{\rm step}(C_+)-(1-2\chi_+)(\tau t^{2/3}+t^\nu)-(1-2\rho) s t^{1/3}}{-t^{1/3}} =0.
\end{equation}
\end{proof}

\subsection{Proof of Theorem~\ref{thmMain}}
Since we need to deal with different $\tau_k$ jointly, let us make the dependance on $\tau$ clear in the notation: We rewrite the points $A,B , B_+ $ (defined earlier in \eqref{eqAB}) as
\begin{equation}\label{eqABbbis}
\begin{aligned}
A^{k}&=(v_s(t+\tau_{k} t^{2/3})+s t^{1/3},t+\tau_{k} t^{2/3}),\\
B^{k}_{-}&=(v_s (t+\tau_{k} t^{2/3})-(1-2\lambda)(\tau_{k} t^{2/3}+t^\nu),t-t^\nu),\\
B^{k}_{+}&=(v_s (t+\tau_{k} t^{2/3})-(1-2\rho)(\tau_{k} t^{2/3}+t^\nu),t-t^\nu).
\end{aligned}
\end{equation}
By Theorem~\ref{thmDistrSecondClass} we have
\begin{equation}\label{eq4.8}
\begin{aligned}
&\Pb\left(\bigcap_{k=1}^{m}\{X^{\rm 2nd}(t+\tau_{k} t^{2/3})\geq v_s (t+\tau_{k} t^{2/3})+s t^{1/3}\}\right)\\
&=\Pb\left(\bigcap_{k=1}^{m}\{h^-(A^{k})-\mu_s(t+\tau_{k} t^{2/3}) \leq \tilde h^+(A^{k})-\mu_s(t+\tau_{k} t^{2/3})\}\right).
\end{aligned}
\end{equation}
Next we use slow decorrelation. This allows us to replace the point $A^{k}$ in \eqref{eq4.8} by $B^{k}_{-}$ (for $h^{-}$) and resp. by $B^{k}_{+}$ (for $ \tilde h^+$) upon adapting the rescaling.
We define
\begin{equation}
\begin{aligned}\label{Z}
&Z^{-}(B^{k}_{-})=\frac{h^-(B^{k}_{-})-\mu_s (t+\tau_{k} t^{2/3})+(1-2\chi_-)(\tau_{k} t^{2/3}+t^{\nu})+(1-2\lambda)s t^{1/3}}{-t^{1/3}},\\&
Z^{+}(B^{k}_{+})= \frac{\tilde h^+(B^{k}_{+})-\mu_s(t+\tau_{k} t^{2/3})+(1-2\chi_+)(\tau_{k} t^{2/3}+t^\nu)+(1-2\rho)s t^{1/3}}{-t^{1/3}}.
\end{aligned}
\end{equation}
By Propositions~\ref{propSlowDecminus} and~\ref{propSlowDecplus}, as $t\to\infty$, we have
\begin{equation}\label{eq4.9}
\begin{aligned}
&\lim_{t\to \infty} \Pb\left(\bigcap_{k=1}^{m}\{h^-(A^{k})-\mu_s(t+\tau_{k} t^{2/3}) \leq \tilde h^+(A^{k})-\mu_s(t+\tau_{k} t^{2/3})\}\right)\\&=\lim_{t\to\infty}\Pb\left(\bigcap_{k=1}^{m}\{Z^{-}(B^{k}_{-})\leq Z^{+}(B^{k}_{+})\}\right).
\end{aligned}
\end{equation}
Define now the space-time regions
\begin{equation}
\begin{aligned}
\mathcal{D}^{-}=\{(x,\ell):x<v_s \ell\}, \quad \mathcal{D}^{+}=\{(x,\ell):x>v_s \ell\}.
\end{aligned}
\end{equation}
Also define the height functions $h^{-}_{\mathcal{D}^{-}}(B^{k}_{-})$ (resp.\ $\tilde h^{+}_{\mathcal{D}^{+}}(B_{+}^k)$) which differ from $h^{-}(B^{k}_{-})$ (resp.\ $\tilde h^{+}(B_{+}^k)$) in that the former the height function to right of $\mathcal{D}^{-}$ (resp.\ to the left of $\mathcal{D}^{+}$) has a deterministic fixed slope $1$ (resp.\ $-1$). We define random variables $Z_{\mathcal{D}^{-}}^{-}(B^{k}_{-})$ and $Z_{\mathcal{D}^{+}}^{+}(B^{k}_{+})$ from $h^{-}_{\mathcal{D}^{-}}(B^{k}_{-})$ and $\tilde h^{+}_{\mathcal{D}^{+}}(B_{+}^k)$ as in \eqref{Z}.

Since $\mathcal{D}^{-}$ and $\mathcal{D}^{+}$ are disjoint, $(Z_{\mathcal{D}^{-}}^{-}(B^{k}_{-}),1\leq k\leq m)$ and $(Z_{\mathcal{D}^{+}}^{+}(B^{k}_{+}),1\leq k\leq m)$ are independent random vectors. Let $\x^{+}_{k}$ and $\x^{-}_{k}$ be the geodesics starting from $B_{-}^{k},B_{+}^{k}$ as defined in Proposition~\ref{propLocalization}. Furthermore, we define the events
\begin{equation}
E^{+}_{k}=\{(\x^{+}_{k}(\ell),\ell)\in\mathcal{D}^{+},\ell\leq t-t^{\nu}\},\quad E^{-}_k=\{(\x^{-}_{k}(\ell),\ell)\in\mathcal{D}^{-},\ell \leq t-t^{\nu}\}.
\end{equation}
Then on $E^{+}_k\cap E^{-}_{k}$ we have $Z_{\mathcal{D}^{\pm}}^{\pm}(B_\pm^{k})=Z^\pm (B_\pm^{k})$. Proposition~\ref{propLocalization} gives
\begin{equation}
\lim_{t\to\infty}\Pb\left(\bigcap_{k=1}^{m}E^{+}_{k}\cap \bigcap_{k=1}^{m}E^{-}_{k} \right)=1.
\end{equation}
We thus see that the convergence
 \eqref{eq2.9} and \eqref{eq2.13} to $\mathcal{H}^{\pm}$  applies to
$Z_{\mathcal{D}^{\pm}}^{\pm}(B_\pm^{k})$ as well.
 Thus we get, as $t\to\infty$,
\begin{equation}\label{eq3.33}
\begin{aligned}
&\lim_{t\to\infty}\Pb\left(\bigcap_{k=1}^{m}\{Z^{-}(B^{k}_{-})\leq Z^{+}(B^{k}_{+})\}\right)=\lim_{t\to\infty}\Pb\left(\bigcap_{k=1}^{m}\{Z_{\mathcal{D}^{-}}^{-}(B^{k}_{-})\leq Z_{\mathcal{D}^{+}}^{+}(B^{k}_{+})\}\right)\\
&= \Pb\left(\bigcap_{k=1}^{m}\{(1-2\lambda)s-{\cal H}^-((\lambda-\rho)\tau_{k})\leq(1-2\rho)s- {\cal H}^+((\rho-\lambda)\tau_{k})\}\right),
\end{aligned}
\end{equation}
where the two limit processes ${\cal H}^-$ and ${\cal H}^+$ are independent. Reordering the terms in \eqref{eq3.33} leads to \eqref{eq3.8b}.

\subsection{Proof of Corollary~\ref{truecor}}
What we need is to verify Assumptions (a)-(c) of Section~\ref{sectAsymptotics} and identify the limit in (c).

Recall that the initial condition is \eqref{eq5.5} given by
\begin{equation}
x_n(0)=\left\{\begin{array}{ll}
-\lfloor n/\lambda\rfloor, &n\geq 1,\\
-\lfloor n/\rho\rfloor,& n\leq 0.
\end{array}\right.
\end{equation}
Then Assumption (a) is obvious.

Assumption (b) on the control of the end-point of the backwards geodesics is the following.
\begin{prop}\label{prop5.3}
Let $\x^-$ (resp.\ $\x^+$) be backwards geodesics of $h^-$ (resp.\ $\tilde h^+$) starting from $v_s t$ at time $t$. Then, for $\delta=\frac12(\rho-\lambda)$ we have
\begin{equation}
\lim_{t\to\infty}\Pb(\x^-(0)\leq -\delta t,\x^+(0)\geq \delta t) = 1.
\end{equation}
\end{prop}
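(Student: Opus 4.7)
The plan is to derive both statements from the variational formula \eqref{minid} applied at $\tau=0$, which realizes $\x^-(0)$ as the argmin of $y\mapsto h^-(y,0)+h^{\rm step}_{y,0}(v_s t,t)$, and identifies the characteristic endpoint $(\lambda-\rho)t$ as the macroscopic minimizer. For the initial condition \eqref{eq5.5}, the macroscopic initial profile $\hat h^-(\eta)=\lim_{t\to\infty}h^-(\eta t,0)/t$ equals $(1-2\lambda)\eta$ for $\eta\leq 0$ and $\eta$ for $\eta\geq 0$, while the hydrodynamics of the step height function gives $h^{\rm step}_{\eta t,0}(v_s t,t)/t\to\psi(v_s-\eta)$ with $\psi(u)=\tfrac12(1+u^2)$ for $|u|\leq 1$ and $\psi(u)=|u|$ otherwise. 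A direct calculation shows that $F(\eta):=\hat h^-(\eta)+\psi(v_s-\eta)$ has unique minimizer $\eta^\star=\lambda-\rho$ with $F(\eta^\star)=\mu_s$; by strict convexity of $\psi$ on the fan and the affine structure of $\hat h^-$, the restricted minimum of $F$ over $\{\eta\geq -\delta\}$ equals $\mu_s+\tfrac18(\rho-\lambda)^2$, attained at $\eta=-\delta$. So there is a macroscopic gap of order $t$ between the global minimum and the minimum over the ``bad'' region $\{y>-\delta t\}$.

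The probabilistic argument then runs in two parts. Plugging $y_0=\lfloor(\lambda-\rho)t\rfloor$ into \eqref{minid} yields the upper bound $h^-(v_s t,t)\leq\mu_s t+\Or(t^{1/3})$ with probability tending to $1$, using the one-point GUE Tracy--Widom asymptotics for $h^{\rm step}_{y_0,0}(v_s t,t)$ (valid since $v_s-y_0/t=1-2\lambda\in(-1,1)$ lies inside the fan; see Appendix~\ref{AppStepIC}). In the other direction, the standard pointwise moderate-deviation tails for $h^{\rm step}$, discretized on an $\Or(t^{1/3})$-lattice in $y\in(-\delta t,Ct)$ (with $C$ a large fixed constant) and extended by monotonicity of $y\mapsto h^{\rm step}_{y,0}(v_s t,t)$, produce the uniform lower bound
\[
\min_{y>-\delta t}\bigl\{h^-(y,0)+h^{\rm step}_{y,0}(v_s t,t)\bigr\}\geq\bigl(\mu_s+\tfrac{1}{16}(\rho-\lambda)^2\bigr)\,t
\]
with high probability, the complementary range $|y|>Ct$ being dispatched by crude macroscopic lower bounds. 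On $\{\x^-(0)>-\delta t\}$ the argmin in \eqref{minid} lies in $\{y>-\delta t\}$, so this lower bound applies to $h^-(v_s t,t)$ itself and contradicts the upper bound for large $t$. Therefore $\Pb(\x^-(0)\leq -\delta t)\to 1$, and the symmetric argument applied to $\tilde h^+$ (whose macroscopic initial profile has slope $-1$ on $\eta\leq 0$ and slope $1-2\rho$ on $\eta\geq 0$, with unique minimizer $\eta^\star=\rho-\lambda>\delta$) gives $\Pb(\x^+(0)\geq\delta t)\to 1$.

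The main obstacle is the uniform lower bound in the second step: pointwise Tracy--Widom and moderate-deviation tails for $h^{\rm step}$ are classical, but extending them to an estimate uniform in $y$ along $(-\delta t,\infty)$ requires the lattice-plus-monotonicity scheme outlined above. The delicate regime is $y$ near $-\delta t$, where however the macroscopic slack is a fixed positive constant times $t$, comfortably above the $\Or(t^{1/3})$ fluctuation scale, so a union bound over $\Or(t^{2/3})$ lattice points of stretched-exponentially rare deviations gives the desired control.
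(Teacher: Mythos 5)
Your proposal follows essentially the same route as the paper's proof: both use the variational identity \eqref{minid}, plug in $y_0=(\lambda-\rho)t$ to get the upper bound $h^-(v_s t,t)\leq \mu_s t+\Or(t^{1/3})$, and union-bound the minimum over the bad region $\{y>-\delta t\}$ against the intermediate threshold $\mu_s t+\tfrac{1}{16}(\rho-\lambda)^2 t$ via the moderate-deviation estimate \eqref{eqBoundStepIC}, exploiting the same macroscopic gap $\tfrac12(\beta+\lambda-\rho)^2\geq\tfrac18(\rho-\lambda)^2$. One small caveat: $y\mapsto h^{\rm step}_{y,0}(v_s t,t)$ is not pathwise monotone in $y$, so the lattice-interpolation step should instead invoke the $1$-Lipschitz bound $|h^{\rm step}_{y,0}(v_s t,t)-h^{\rm step}_{y',0}(v_s t,t)|\leq |y-y'|$ (valid under the basic coupling since the initial profiles differ by at most $|y-y'|$ in sup norm), or one can simply drop the discretization altogether, as the paper does, since a union bound over all $\Or(t)$ integer points is already negligible against the $e^{-c t^{2/3}}$ tails.
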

\begin{proof}
Let us prove the statement for $\x^-(0)$. The proof for $\x^+(0)$ is completely analogous.

We expect that the characteristics from $((1-\lambda-\rho)t,t)$ will be around $(\lambda-\rho)t$ at time $0$. So the step initial condition from that point is going to be a relatively good upper bound for $h^-$, namely
\begin{equation}
h^-(v_s t,t)\leq h^-((\lambda-\rho)t,0)+h^{\rm step}_{(\lambda-\rho)t,0}(v_s t,t).
\end{equation}
Define the height function arising from the minimization of \eqref{minid} from $-\delta t$ to $0$ as
\begin{equation}
h^{\rm ld}(v_s t,t):=\min_{-\delta t< z\leq 0}\{h^-(z,0)+h^{\rm step}_{z,0}(v_s t,t)\}.
\end{equation}
Then, for  $S$ arbitrary we have
\begin{equation}\label{eq5.13}
\begin{aligned}
&\Pb(\x^-(0)>-\delta t)\leq \Pb\big(h^{\rm ld}(v_s t,t) \leq  h^-((\lambda-\rho)t,0)+h^{\rm step}_{(\lambda-\rho)t,0}(v_s t,t)\big)\\
&\leq \sum_{-\delta t< z\leq 0} \Pb\big(h^-(z,0)+h^{\rm step}_{z,0}(v_s t,t) \leq  h^-((\lambda-\rho)t,0)+h^{\rm step}_{(\lambda-\rho)t,0}(v_s t,t)\big)\\
&\leq \sum_{-\delta t< z\leq 0} \Pb\big(h^-(z,0)+h^{\rm step}_{z,0}(v_s t,t) \leq  S\big)\\
&+\sum_{-\delta t<z\leq 0}\Pb\big(S\leq h^-((\lambda-\rho)t,0)+h^{\rm step}_{(\lambda-\rho)t,0}(v_s t,t)\big).
\end{aligned}
\end{equation}
With the initial conditions \eqref{eq5.5} we have $h^-(z,0)=(1-2\lambda)z+\Or(1)$. Also, see \eqref{eqBoundStepIC},
\begin{equation}
\begin{aligned}
&h^-(-\beta t,0)+h^{\rm step}_{-\beta t,0}(v_s t,t) = (1-\lambda-\rho+2\lambda\rho)t + \tfrac12(\beta+\lambda-\rho)^2 t+\Or_p (t^{1/3}),\\
&h^-((\lambda-\rho)t,0)+h^{\rm step}_{(\lambda-\rho)t,0}(v_s t,t) =(1-\lambda-\rho+2\lambda\rho)t+\Or_p (t^{1/3}).
\end{aligned}
\end{equation}
Since $\frac12(\beta+\lambda-\rho)^2\geq \frac18 (\rho-\lambda)^2$ for all $\beta\in [0,\delta]$, we set in \eqref{eq5.13} the value $S=(1-\lambda-\rho+2\lambda\rho)t + \frac{1}{16}(\rho-\lambda)^2 t$. From the upper and lower bound of $h^{\rm step}$, see \eqref{eqBoundStepIC}, we get that each of the summand in the r.h.s.\ of \eqref{eq5.13} is bounded by $C e^{-c t^{2/3}}$ for some constants $C,c>0$. As we have only $\Or(t)$ summands, the proof is completed.
\end{proof}

Finally, Assumption (c) is a special case of Corollary~\ref{corAfinal} as the following result shows.
\begin{cor}\label{corAspecial}
Let $v_s=1-\lambda-\rho$, $\chi_-=\lambda(1-\lambda)$, $\chi_+=\rho(1-\rho)$, and ${\bf h}^\pm(u,\theta,t)$ as in \eqref{eq2.4}. Then
\begin{equation}
\begin{aligned}
&\lim_{t\to\infty} \frac{h^-(v_s t+u t^{2/3}+(1-2\lambda)\theta,t+\theta)-{\bf h}^-(u,\theta,t)}{-t^{1/3}},\\
&=\chi_-^{2/3} 2^{4/3} {\cal A}_1(\chi_-^{-1/3}2^{-5/3}u)
\end{aligned}
\end{equation}
and
\begin{equation}
\begin{aligned}
&\lim_{t\to\infty}\frac{\tilde h^+(v_s t+u t^{2/3}+(1-2\rho)\theta,t+\theta)-{\bf h}^+(u,\theta,t)}{-t^{1/3}}\\
&=\chi_+^{2/3} 2^{4/3} \widetilde {\cal A}_1(\chi_+^{-1/3}2^{-5/3}u).
\end{aligned}
\end{equation}
\end{cor}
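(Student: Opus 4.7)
The plan is to specialize the general flat/periodic scaling limit of Corollary~\ref{corAfinal} (from Appendix~\ref{AppFlat}) to the present situation. First note that, with the initial condition \eqref{eq5.5}, $\eta^-_0$ is periodic of density $\lambda$ on $\Z_-$ and empty on $\Z_+$, while $\tilde\eta^+_0$ is fully packed on $\Z_-$ and periodic of density $\rho$ on $\Z_+$. Under the entropy solution of \eqref{eq1}, $h^-$ develops a rarefaction fan joining the density-$\lambda$ plateau $x\leq(1-2\lambda)t$ to the empty plateau $x\geq t$, while $\tilde h^+$ has a rarefaction fan joining the packed plateau $x\leq -t$ to the density-$\rho$ plateau $x\geq(1-2\rho)t$. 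The hypothesis $\lambda<\rho$ gives $v_s<1-2\lambda$ and $v_s>1-2\rho$, so $(v_s t,t)$ lies strictly inside the density-$\lambda$ plateau of $h^-$ and strictly inside the density-$\rho$ plateau of $\tilde h^+$.

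For the reduction, I would work with the min-representation \eqref{minid} and the backwards geodesic of Definition~\ref{DefinBackwardsPaths}. A direct variational analysis of $h^-$ at $(v_s t+u t^{2/3},t)$ places the macroscopic minimizer at $y^*\approx -(\rho-\lambda)t\in\Z_-$; the alternative $y\geq 0$ branch of the minimum (which feels the empty half of $\eta^-_0$) produces a macroscopically strictly larger value, the gap being of order $t$ and hence dominating any $\Or(t^{1/3})$ fluctuation. Combined with the transversal-fluctuation bound of Proposition~\ref{PropLocStepIC} and the comparison Lemma~\ref{lemOrderingGeoStepIC}, this shows that the backwards geodesic stays in $\Z_-$ with probability tending to $1$; on this event $h^-$ coincides, under the basic coupling, with the height function of pure periodic density-$\lambda$ TASEP, to which Corollary~\ref{corAfinal} applies and delivers the limit $\chi_-^{2/3}2^{4/3}{\cal A}_1(\chi_-^{-1/3}2^{-5/3}u)$ once one matches $2\chi_- t+(1-2\lambda)v_s t=(1-\lambda-\rho+2\lambda\rho)t$.

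The extension from $\theta=0$ to general $\theta$ is the constant-law-along-characteristics principle: moving along the density-$\lambda$ characteristic via $(x,t)\mapsto(x+(1-2\lambda)\theta,t+\theta)$ adds the flux contribution $2\chi_-\theta$ and the slope contribution $(1-2\lambda)^2\theta$, for a total of $(1-2\chi_-)\theta$, precisely the $\theta$-coefficient in ${\bf h}^-(u,\theta,t)$; the fluctuation converges to the same ${\cal A}_1$-limit by slow decorrelation (Theorem~\ref{thmSlowDec}). The treatment of $\tilde h^+$ is entirely symmetric: the backwards geodesic endpoint is near $+(\rho-\lambda)t\in\Z_+$, the reduction is to pure periodic density-$\rho$ TASEP, and Corollary~\ref{corAfinal} yields the rescaled independent Airy$_1$ process $\widetilde{\cal A}_1$ with $\chi_+$ in place of $\chi_-$.

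The main obstacle is the geodesic-localization step, i.e., showing that the step discontinuity of $\eta^-_0$ (resp.\ $\tilde\eta^+_0$) at the origin does not disturb the $t^{1/3}$ fluctuations observed in the interior of the density-$\lambda$ (resp.\ density-$\rho$) plateau. This is manageable because the macroscopic variational gap between the two branches is of order $t$, while the fluctuations of each branch are only $\Or(t^{1/3})$; the transversal-fluctuation bounds for step-IC geodesics required are exactly those of Proposition~\ref{PropLocStepIC}. Once this reduction is secured, the rest of the statement is direct parameter matching with Corollary~\ref{corAfinal}.
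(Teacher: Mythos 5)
Your overall strategy --- specialize the Appendix result and move along characteristics to handle $\theta$ --- is in the right spirit, but it misidentifies the key input, and this creates a genuine gap as written. Corollary~\ref{corAfinal} (equivalently \eqref{eqA3}) is not a statement about full-line (``pure periodic'') flat TASEP: it is proved precisely for the half-flat initial data $X_0(n)=-\lfloor n/\lambda\rfloor$, $n\geq 1$, i.e.\ for $h^-$ itself (density $\lambda$ on $\Z_-$, empty on $\Z_+$), observed at $(1-2\lambda)t-at+ut^{2/3}$ with $a>0$; the influence of the empty half-line is already disposed of there (via the Airy$_{2\to1}$ analysis and the shift $\xi\to\xi+L$). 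Consequently your geodesic-localization reduction of $h^-$ to full-line flat TASEP is unnecessary --- the paper's proof is simply the substitution $t\to t+\theta$ in \eqref{eqA3} with $a=\big((\rho-\lambda)t-ut^{2/3}+u(t+\theta)^{2/3}\big)/(t+\theta)\to\rho-\lambda>0$ --- and the full-line flat Airy$_1$ limit at general density $\lambda\neq 1/2$ that your reduction would then invoke is exactly what is not available in the literature (this is precisely why Appendix~\ref{AppFlat} exists); Corollary~\ref{corAfinal} as cited does not supply it. The argument could be repaired, since the same localization shows half-flat and full-flat height functions agree at the observation point, but then the localization must be run for the variational problem \eqref{minid} of the flat profile (or via a two-sided coupling): because $h^-\geq h^{\mathrm{flat}}$ holds deterministically under the basic coupling, confining $h^-$'s own backwards geodesic to $\Z_-$ only reproduces this inequality and does not by itself give the coincidence of the two height functions.

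The second gap is the claim that $\tilde h^+$ is ``entirely symmetric''. Corollary~\ref{corAfinal} concerns particles of density $\lambda$ to the left of the origin and no particles to the right, whereas $\tilde\eta^+_0$ is fully packed on $\Z_-$ with density $\rho$ on $\Z_+$; these are not related by a spatial reflection alone. The paper bridges this by particle-hole symmetry (the holes of $\tilde\eta^+$ have density $\rho$ on $\Z_+$ and are absent to the left, and the height evolution can be read off the hole dynamics), after which Corollary~\ref{corAfinal} applies; your route would need either this symmetry or a second localization step reducing to a full-line density-$\rho$ limit, which again must itself be derived from Corollary~\ref{corAfinal}. On the positive side, your handling of $\theta$ via slow decorrelation (Theorem~\ref{thmSlowDec}) is a legitimate alternative to the paper's direct substitution, and your centering arithmetic (the coefficient $(1-2\chi_\pm)\theta$ and the identity $2\chi_-+(1-2\lambda)v_s=1-\lambda-\rho+2\lambda\rho$) is correct; but to apply Theorem~\ref{thmSlowDec} you still need the one-point limit at the $\theta$-shifted point, which again comes from \eqref{eqA3} at time $t+\theta$ --- at which point the direct substitution already yields the full statement.
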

\begin{proof}
The result for $h^-$ follows from Corollary~\ref{corAfinal}, see reformulation \eqref{eqA3}, by setting $t\to t+\theta$ and $a=\frac{(\rho-\lambda)t-u t^{2/3}+u(t+\theta)^{2/3}}{t+\theta}$. Since $\rho-\lambda>0$, $a\to \rho-\lambda>0$ as $t\to\infty$.

The result for $\tilde h^+$ can be obtained by employing particle-hole symmetry, since the evolution of the height function can be equivalently described in terms of the jumps of the holes (when they jump to the left, the height increases by $2$) and the initial condition of the holes will have density $\rho$ on $\Z_+$ and empty on the left of the origin.
\end{proof}

\appendix

\section{Limit process for flat IC}\label{AppFlat}
Let us consider TASEP described in terms of particle positions $X_t(n)$ ordered from right to left with initial condition
\begin{equation}
X_0(n)=-\lfloor n/\lambda\rfloor,\quad n\geq 1.
\end{equation}
Let $h(x,t)$ be the associated height function, which can be expressed as
\begin{equation}
h(x,t)=2 (X^{-1}_t(x-1)-X^{-1}_0(-1))+x,
\end{equation}
where $X^{-1}_t(x)=\min\{k\in\Z\, |\, X_t(k)\leq x\}$. With our initial condition we have as in~\cite{MQR17}, $X^{-1}_0(-1)=1$ that is, the particle with label $1$ is initially rightmost in $\Z_-=\{\ldots,-2,-1\}$. The only difference is that our height function is $-1$ times the height function of~\cite{MQR17}. In the following, we adopt the notations of~\cite{MQR17}.

We want to show that, setting $\chi=\lambda(1-\lambda)$ and any $a>0$, we have
\begin{equation}\label{eqA3}
\begin{aligned}
&\lim_{t\to\infty}\frac{h^-((1-2\lambda)t-a+u t^{2/3},t)-(1-2\chi_-)t+(1-2\lambda)(at-u t^{2/3})}{-t^{1/3}}\\
&=\chi^{2/3}2^{4/3} {\cal A}_1(\chi^{-1/3}2^{-5/3} u)
\end{aligned}
\end{equation}
in the sense of finite-dimensional convergence. The one-point convergence for general density $\lambda$ was proven in~\cite{FO17}.

Since ${\cal A}_1(u)\stackrel{(d)}{=}{\cal A}_1(-u)$, this statement is equivalent to showing the convergence of
\begin{equation}\label{eqA1}
h^{\rm resc}_t(\xi)=\frac{h((1-2\lambda)t-a(\xi,t),t)-(1-2\chi)t+(1-2\lambda)a(\xi,t)}{-2^{4/3}\chi^{2/3} t^{1/3}}\to {\cal A}_1(\xi)
\end{equation}
as $t\to\infty$, where we used the notation $a(\xi,t)=at+2^{5/3}\chi^{1/3}\xi t^{2/3}$.

Some elementary algebra leads to\footnote{We have done some shift by $o(t^{-1/3})$ to $\xi_i$ and $s_i$ to have $n_i$ independent of $s_i$, but in the limit is the same since the limit is continuous in $\xi_i$ and $s_i$ and the differences goes to $0$ as $t\to\infty$.}
\begin{equation}
\lim_{t\to\infty}\Pb(h^{\rm resc}_t(\xi_i)\leq s_i,1\leq i \leq m) = \lim_{t\to\infty}\Pb(X_t(n_i)>a_i,1\leq i\leq m)
\end{equation}
where
\begin{equation}\label{eqA4}
\begin{aligned}
n_i&=\lambda^2 t+a\lambda t+\lambda 2^{5/3}\chi^{1/3}\xi_i t^{2/3},\\
a_i&=(1-2\lambda)t-a t-2^{5/3}\chi^{1/3}\xi_i t^{2/3}-2^{1/3}\chi^{2/3} \frac{s_i}{\lambda} t^{1/3}.
\end{aligned}
\end{equation}
We start considering $a=0$, where under the above scaling we should see the Airy$_{2\to 1}$ transition process.

For $\lambda=1/2$ the rigorous asymptotic was made in~\cite{BFS07} using the explicit representation of the kernel. For $\lambda=1/d$, with $d=2,3,\ldots$ the analysis was made in~\cite{BFP06} (in a discrete time version). For more general initial condition with average density $1/2$, this was carried out in~\cite{MQR17}. The approach of~\cite{MQR17} is robust and can be applied also to generic average density. Below we indicate the main steps focusing on the differences with respect to $\lambda=1/2$. We however will not write down all details of the asymptotic analysis of the appearing kernel and the (exponential) bounds allowing to exchange limits and sums in the expression of the kernel as well as to get the convergence of the Fredholm determinants. The strategy for the analysis of contour integrals are present in many papers, including the above mentioned ones or also Section~6.1 of~\cite{BF08}. The asymptotic for $\lambda\neq 1/2$ does not introduce any new technical difficulties as we could check. For instance the question of trace-class is carefully discussed in Appendix~B of~\cite{MQR17}.

We start with Theorem~2.6 of~\cite{MQR17} (which holds for right-finite initial data as in our case), where they get an explicit enough expression for the biorthogonalization in~\cite{BFPS06}.
\begin{thm}[Theorem~2.6 of~\cite{MQR17}]\label{thmA1} We have
\begin{equation}
\Pb(X_t(n_i)>a_i,1\leq i\leq m) = \det(\Id-\bar\chi_a K_t \bar\chi_a)_{\ell^2(\{n_1,\ldots,n_m\}\times\Z)},
\end{equation}
where $\bar\chi_a(n_i,x)=\Id_{x\leq a_i}$,
\begin{equation}
K_t(n_i,x_i;n_j,x_j)=-Q^{n_j-n_i}(x_i,x_j)\Id_{n_i<n_j} + \sum_{y} S_{-t,-n_i}(y,x_i) \bar S^{{\rm epi}(X_0)}_{-t,n_j}(y,x_j)
\end{equation}
with
\begin{equation}\label{eqA5}
\begin{aligned}
Q^{n_j-n_i}(x_i,x_j)&=\frac{1}{2^{x_i-x_j}} \binom{x_i-x_j-1}{n_j-n_i-1}\Id_{x_i-x_j\geq n_j-n_i},\\
S_{-t,-n_i}(y,x_i)&=\frac{1}{2^{x_i-y}}\frac{1}{2\pi\I}\oint_{\Gamma_0}dw \frac{(1-w)^{n_i} e^{t(w-1/2)}}{w^{n_i+1+x_i-y}},\\
\bar S_{-t,n_j}(y,x_j)&=\frac{1}{2^{y-x_j}}\frac{1}{2\pi\I}\oint_{\Gamma_0} dw \frac{(1-w)^{x_j-y+n_j-1}e^{t(w-1/2)}}{w^{n_j}},\\
\bar S^{{\rm epi}(X_0)}_{-t,n_j}(y,x_j)&=\E_{B_0=y}\left[\bar S_{-t,n_j-\tau}(B_\tau,x_j)\Id_{\tau<n_j}\right].
\end{aligned}
\end{equation}
Here $\tau=\min\{m\geq 0\, |\, B_m>X_0(m+1)\}$, where $B_m$ is a random walk with transition matrix $Q$, that is, $\Pb(B_m-B_{m-1}=-k)=2^{-k}$, $k\geq 1$.
\end{thm}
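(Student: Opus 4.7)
The plan is to follow the biorthogonalization scheme of Matetski--Quastel--Remenik~\cite{MQR17}, since the statement is their Theorem~2.6. The starting point is the biorthogonal ensemble representation of Borodin--Ferrari--Pr\"ahofer--Sasamoto~\cite{BFPS06}, which expresses the multipoint distribution of TASEP particle positions as a Fredholm determinant
\begin{equation*}
\Pb(X_t(n_i)>a_i,\,1\leq i\leq m)=\det\bigl(\Id-\bar\chi_a K_t\bar\chi_a\bigr)
\end{equation*}
with kernel of the form $K_t(n_i,x_i;n_j,x_j)=-Q^{n_j-n_i}(x_i,x_j)\Id_{n_i<n_j}+\sum_k\Psi_k^{n_i}(x_i)\Phi_k^{n_j}(x_j)$ for two families $\Psi,\Phi$ biorthogonal on $\Z$. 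The $\Psi$-side can be read off directly from the contour-integral diagonalization of the TASEP transfer operator; this gives the formula~\eqref{eqA5} for $S_{-t,-n_i}$. The main task is therefore to find an explicit, workable expression for the $\Phi$-side, which is where the dependence on the initial data $X_0$ enters.

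The key idea of~\cite{MQR17} is a probabilistic ansatz. Introduce the geometric random walk $B_m$ with transition kernel $Q$, so that $\Pb(B_m-B_{m-1}=-k)=2^{-k}$ for $k\geq 1$, and the first passage time $\tau=\min\{m\geq 0:B_m>X_0(m+1)\}$. One then proposes
\begin{equation*}
\Phi_k^{n_j}(x_j)=\E_{B_0=y}\!\bigl[\bar S_{-t,n_j-\tau}(B_\tau,x_j)\Id_{\tau<n_j}\bigr],
\end{equation*}
which is exactly $\bar S^{{\rm epi}(X_0)}_{-t,n_j}(y,x_j)$ in the statement. Here $\bar S_{-t,n_j}$ is the ``free'' counterpart that would be biorthogonal to $\Psi$ in the absence of initial data, and the expectation projects it onto the subspace compatible with $X_0$ by coupling to the first time $B$ exits above the initial profile. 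Biorthogonality $\langle \Psi^n_k,\Phi^n_\ell\rangle=\delta_{k,\ell}$ is then verified via a residue computation combined with the strong Markov property of $B$ applied at $\tau$, and the claimed form of $K_t$ follows upon substitution.

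The main obstacle I expect is analytic rather than algebraic: one must verify that the relevant operators are trace-class so the Fredholm determinant is well-defined and equals the stated probability, and one must justify the interchanges of sums and contour integrals used to manipulate $K_t$. The right-finiteness of $X_0$ (which holds for~\eqref{eq5.5}, since only finitely many particles with label $n\geq 1$ lie to the right of any given site) guarantees $\tau<\infty$ almost surely and, combined with standard decay estimates for the kernels $S$ and $\bar S$, gives the required trace-class bounds. These technicalities are handled carefully in Appendix~B of~\cite{MQR17}, and our setting satisfies their hypotheses, so the formula transfers to the present context without modification.
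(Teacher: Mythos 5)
Your proposal is consistent with the paper's treatment: the paper does not prove this statement at all but simply imports it as Theorem~2.6 of~\cite{MQR17} (noting only that the initial data~\eqref{eq5.5} is right-finite so that the theorem applies), and your sketch is an accurate outline of the proof given there — the biorthogonal ensemble of~\cite{BFPS06}, the random-walk hitting-time representation of the $\Phi$-side, and the trace-class issues deferred to Appendix~B of~\cite{MQR17}. The only small inaccuracy is immaterial: right-finiteness is needed for the~\cite{BFPS06}/\cite{MQR17} framework itself, not to make $\tau<\infty$ a.s., since the indicator $\Id_{\tau<n_j}$ already renders the formula well-defined.
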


\begin{remark}
If one tries to apply directly Theorem~\ref{thmA1} with the scaling \eqref{eqA4} with $a>0$, one gets in trouble. The reason is that the random walk $B_m$ starts from $0$, which is however not the macroscopic position of the ending of the backwards geodesics starting at $(a_i,t)$. For this case one should rather look for a two-sided finite formula in the spirit of Section~3.4 of~\cite{MQR17}. We will rather argue that the limit process for $a>0$ is the same as the one obtained for $a=0$ with a subsequent change of variables $\xi_i\to\xi_i+L$ in the $L\to\infty$ limit.
\end{remark}

Consider the following scaling for $x_i$ and $y$:
\begin{equation}\label{eqA6}
\begin{aligned}
n_i&=\lambda^2 t+\lambda 2^{5/3}\chi^{1/3}\xi_i t^{2/3},\\
x_i&=(1-2\lambda)t-2^{5/3}\chi^{1/3}\xi_i t^{2/3}-2^{1/3}\chi^{2/3} \frac{u_i}{\lambda}t^{1/3},\\
y&=2^{1/3}\chi^{2/3} \frac{v}{\lambda}t^{1/3}.
\end{aligned}
\end{equation}

The critical point for the steep descent analysis of $S_{-t,-n_i}(y,x_i)$ is $1-\lambda$ and the one for $\bar S_{-t,n_j}(y,x_j)$ is $\lambda$. For that reason we conjugate the entries of the kernel $K_t$ to avoid dealing with it in the asymptotics, and multiply by $e^{-t(1/2-\lambda)}$ (resp.\ $e^{t(1/2-\lambda)}$) the term $S_{-t,-n_i}(y,x_i)$ (resp.\ $\bar S_{-t,n_j}(y,x_j)$). Thus, Theorem~\ref{thmA1} holds true if we redefine the entries of \eqref{eqA5} as follows:
\begin{equation}\label{eqA7}
\begin{aligned}
Q^{n_j-n_i}(x_i,x_j)&=\frac{(1-\lambda)^{n_i-n_j+x_i-x_j}}{\lambda^{n_i-n_j}} \binom{x_i-x_j-1}{n_j-n_i-1}\Id_{x_i-x_j\geq n_j-n_i},\\
S_{-t,-n_i}(y,x_i)&=\frac{(1-\lambda)^{n_i+x_i-y}}{\lambda^{n_i}}\frac{1}{2\pi\I}\oint_{\Gamma_0}dw \frac{(1-w)^{n_i} e^{t(w-1+\lambda)}}{w^{n_i+1+x_i-y}},\\
\bar S_{-t,n_j}(y,x_j)&=\frac{(1-\lambda)^{-n_j+y-x_j}}{\lambda^{-n_j}} \frac{1}{2\pi\I}\oint_{\Gamma_0} dw \frac{(1-w)^{x_j-y+n_j-1}e^{t(w-\lambda)}}{w^{n_j}},\\
\bar S^{{\rm epi}(X_0)}_{-t,n_j}(y,x_j)&=\E_{B_0=y}\left[\bar S_{-t,n_j-\tau}(B_\tau,x_j)\Id_{\tau<n_j}\right].
\end{aligned}
\end{equation}
From now we are going to use \eqref{eqA7} instead of \eqref{eqA5}.

\begin{lem}\label{lemAQ}
Under the scaling \eqref{eqA6} we get, for $\xi_j>\xi_i$,
\begin{equation}
\lim_{t\to\infty} \frac{2^{1/3}\chi^{2/3} t^{1/3}}{\lambda} Q^{n_j-n_i}(x_i,x_j)= \frac{1}{\sqrt{4\pi (\xi_j-\xi_i)}} e^{-(u_j-u_i)^2/(4 (\xi_j-\xi_i))}.
\end{equation}
\end{lem}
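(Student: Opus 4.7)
The plan is to recognize $Q^{N}(x_i,x_j)$ with $N = n_j-n_i$ and $X = x_i-x_j$ as (up to the indicator) the probability mass function of a negative binomial distribution, and then apply a local central limit theorem / Stirling's formula. Indeed, from \eqref{eqA7},
\begin{equation*}
Q^{N}(x_i,x_j) = \binom{X-1}{N-1}\lambda^{N}(1-\lambda)^{X-N},
\end{equation*}
which is exactly $\Pb(B=X)$ for $B=\sum_{k=1}^N G_k$, where the $G_k$ are i.i.d. shifted geometric random variables with $\Pb(G_k=j)=\lambda(1-\lambda)^{j-1}$, $j\geq 1$. Their mean is $1/\lambda$ and variance $(1-\lambda)/\lambda^{2}$, so $B$ has mean $N/\lambda$ and variance $\sigma^{2}=N(1-\lambda)/\lambda^{2}$.

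Next I would insert the scaling \eqref{eqA6}. One finds $N=\lambda\,2^{5/3}\chi^{1/3}(\xi_j-\xi_i)\,t^{2/3}\to\infty$ (using $\xi_j>\xi_i$), while
\begin{equation*}
X-N/\lambda = \tfrac{1}{\lambda}\,2^{1/3}\chi^{2/3}(u_j-u_i)\,t^{1/3}
\end{equation*}
and $\sigma = \Theta(t^{1/3})$, so the deviation $X-N/\lambda$ sits precisely on the Gaussian scale. A standard Stirling expansion of $\binom{X-1}{N-1}$, with uniform error terms in the regime $|X-N/\lambda|/N\to 0$, yields
\begin{equation*}
Q^{N}(x_i,x_j) = \frac{1}{\sqrt{2\pi\sigma^{2}}}\exp\!\Big(-\tfrac{(X-N/\lambda)^{2}}{2\sigma^{2}}\Big)\bigl(1+o(1)\bigr).
\end{equation*}

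The rest is bookkeeping using $\chi=\lambda(1-\lambda)$. The exponent simplifies as
\begin{equation*}
\frac{(X-N/\lambda)^{2}}{2\sigma^{2}} = \frac{2^{2/3}\chi^{4/3}(u_j-u_i)^{2}t^{2/3}/\lambda^{2}}{2^{8/3}\chi^{1/3}(1-\lambda)(\xi_j-\xi_i)t^{2/3}/\lambda} = \frac{(u_j-u_i)^{2}}{4(\xi_j-\xi_i)},
\end{equation*}
and multiplying the Gaussian prefactor by $2^{1/3}\chi^{2/3}t^{1/3}/\lambda$ reduces, using the same identity $\chi=\lambda(1-\lambda)$, to $1/\sqrt{4\pi(\xi_j-\xi_i)}$. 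Combining yields the claimed limit.

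The only potentially nontrivial point is quantifying the error in Stirling uniformly on the relevant window of $(X,N)$; this is a routine local CLT bound since the shifted geometric distribution has exponential moments and the deviation $|X-N/\lambda|=O(\sigma)$ is well within the Gaussian regime. (An alternative, in the saddle-point style used elsewhere in the paper, is to represent $\binom{X-1}{N-1}$ as a contour integral and perform a steepest descent around the saddle $w^{\ast}=(N-1)/(X-N)\to \lambda/(1-\lambda)$, which reproduces the same Gaussian after a quadratic expansion.) I do not expect any genuine obstacle.
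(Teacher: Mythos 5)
Your proposal is correct and follows essentially the route the paper indicates: the paper's proof is a one-line appeal to ``Stirling's formula for the factorials or an integral representation of the binomial coefficient,'' and your negative-binomial/local-CLT packaging is exactly the Stirling computation, with the constants ($N/\lambda$, $\sigma^{2}=N(1-\lambda)/\lambda^{2}$, and the simplifications via $\chi=\lambda(1-\lambda)$) worked out correctly to give $\tfrac{1}{\sqrt{4\pi(\xi_j-\xi_i)}}e^{-(u_j-u_i)^2/(4(\xi_j-\xi_i))}$. Your parenthetical saddle-point alternative is likewise the paper's second suggested route, so no further comparison is needed.
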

\begin{proof}
This can be obtained using the Stirling formula for the factorials or doing asymptotic analysis in an integral representation of the binomial coefficients.
\end{proof}

\begin{lem}\label{lemA1}
Under the scaling \eqref{eqA6} we have:
\begin{equation}
\begin{aligned}
\lim_{t\to\infty} &\frac{2^{1/3}\chi^{2/3} t^{1/3}}{\lambda} S_{-t,n_i}(y,x_i) = 2^{1/3}\frac{1}{2\pi\I}\int_{\langle} dW e^{W^3/3-2^{2/3}\xi_i W^2-2^{1/3}(u_i+v)W}\\
&=2^{1/3} \Ai(2^{4/3}\xi_i^2+2^{1/3}(u_i+v)) e^{-\frac83 \xi_i^3-2\xi_i(u_i+v)}.
\end{aligned}
\end{equation}
\end{lem}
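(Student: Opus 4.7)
The plan is a classical steepest-descent analysis of the contour integral in~\eqref{eqA7}. Under the scaling~\eqref{eqA6}, the integrand can be written as $(1/w)\exp(tG_t(w))$, where a direct substitution gives
\[
tG_t(w)=tf_0(w)+2^{5/3}\chi^{1/3}\xi_i\,t^{2/3}f_1(w)+\frac{2^{1/3}\chi^{2/3}(u_i+v)}{\lambda}t^{1/3}f_2(w),
\]
with $f_0(w)=\lambda^2\log(1-w)-(1-\lambda)^2\log w+(w-1+\lambda)$, $f_1(w)=\lambda\log(1-w)+(1-\lambda)\log w$, and $f_2(w)=\log w$. A direct computation gives $f_0'(1-\lambda)=f_0''(1-\lambda)=0$ with $f_0'''(1-\lambda)=-2/\chi$, so $w=1-\lambda$ is a double saddle; moreover $f_1'(1-\lambda)=0$, $f_1''(1-\lambda)=-1/\chi$, and $f_2'(1-\lambda)=1/(1-\lambda)$.

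Next I would rescale $w=1-\lambda-\chi^{1/3}t^{-1/3}W$. The cubic Taylor term of $tf_0$ around $1-\lambda$ contributes exactly $W^3/3$, the quadratic term of $2^{5/3}\chi^{1/3}\xi_i\,t^{2/3}f_1$ contributes $-2^{2/3}\xi_i W^2$, and the linear term of $\frac{2^{1/3}\chi^{2/3}(u_i+v)}{\lambda}t^{1/3}f_2$ contributes $-2^{1/3}(u_i+v)W$ (using $\chi=\lambda(1-\lambda)$); all other contributions are $O(t^{-1/3})$. In parallel, the outer prefactor $(1-\lambda)^{n_i+x_i-y}/\lambda^{n_i}$ in~\eqref{eqA7} is designed precisely so that $\exp(tG_t(1-\lambda))$ is cancelled at orders $t$, $t^{2/3}$, and $t^{1/3}$, which one verifies directly from the explicit values of $f_0(1-\lambda)$, $f_1(1-\lambda)$, and $f_2(1-\lambda)$. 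The only surviving prefactor pieces are thus $1/w|_{w=1-\lambda}=1/(1-\lambda)$ from the $1/w$ factor and $\chi^{1/3}t^{-1/3}$ from the Jacobian $|dw/dW|$; combined with the rescaling factor $2^{1/3}\chi^{2/3}t^{1/3}/\lambda$ they give $2^{1/3}\chi/[\lambda(1-\lambda)]=2^{1/3}$, matching the stated prefactor.

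Finally I would deform $\Gamma_0$ to a steepest-descent contour for $\Re f_0$ passing through $1-\lambda$ along the two descent directions at angles $\pm 2\pi/3$, which in the $W$ variable become the Airy angles $\pm\pi/3$; the deformed contour still encircles the pole at $w=0$, so no residue is picked up. Standard bounds on $\Re f_0$ show that the part of the integral outside a $t^{-1/3+\varepsilon}$ neighborhood of $1-\lambda$ is exponentially small, and on the local piece dominated convergence yields the first equality in the lemma as $2^{1/3}\cdot\tfrac{1}{2\pi\I}\int_{\langle}\exp(W^3/3-2^{2/3}\xi_i W^2-2^{1/3}(u_i+v)W)\,dW$. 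The second equality follows from the elementary shift $W\mapsto W+2^{2/3}\xi_i$ that completes the cube, removes the $W^2$ term at the cost of $\exp(-\tfrac{8}{3}\xi_i^3-2\xi_i(u_i+v))$, and identifies the remaining integral with $\Ai(2^{4/3}\xi_i^2+2^{1/3}(u_i+v))$. The main technical obstacle is the global construction of the steepest-descent contour and the uniform bound on $\Re G_t$ needed to dismiss the tails; this is standard (cf.\ Section~6.1 of~\cite{BF08} and Appendix~B of~\cite{MQR17}) but would be the most tedious part to write out in full.
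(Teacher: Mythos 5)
Your computation is correct and follows essentially the same route as the paper's (very brief) proof: steep-descent analysis at the double critical point $1-\lambda$ with the change of variables $w=1-\lambda-\chi^{1/3}t^{-1/3}W$, the conjugating prefactor in \eqref{eqA7} exactly cancelling the exponent at the saddle, and the cubic completion identifying the $W$-integral with the Airy function (the paper invokes (A.1) of~\cite{BFP09} for this last step). The technical points you defer — the global contour deformation and the tail bounds away from the saddle — are exactly the ones the paper itself leaves as standard, so nothing is missing relative to its proof.
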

\begin{proof}
The proof follow the standard steep descent analysis by inserting the scaling \eqref{eqA4} and \eqref{eqA6}, and extracting the leading contribution around the critical point $1-\lambda$. In the computation we do the change of variable $w=1-\lambda-\chi^{1/3} t^{-1/3} W$. The expression in terms of the Airy function follows for instance by (A.1) of~\cite{BFP09}.
\end{proof}

A standard application of Donsker's theorem leads to the convergence of $B_m$ to a Brownian motion. More precisely, consider the scaling , then
\begin{equation}\label{eqA8}
\begin{aligned}
\tau&=2^{5/3}\chi^{1/3}\lambda t^{2/3} T,\\
B_\tau&= -2^{5/3}\chi^{1/3}t^{2/3} T +2^{1/3}\chi^{2/3} \frac{\beta_T}{\lambda} t^{1/3}.
\end{aligned}
\end{equation}
Then as $t\to\infty$, where $\beta_T$ converges to a Brownian motion with diffusivity constant $2$.
\begin{lem}Under the scaling \eqref{eqA6} and \eqref{eqA8} we have:
\begin{equation}
\begin{aligned}
\lim_{t\to\infty} &\frac{2^{1/3}\chi^{2/3} t^{1/3}}{\lambda} \bar S_{-t,n_j-\tau}(B_\tau,x_j)\\& = 2^{1/3}\frac{1}{2\pi\I}\int_{\langle} dW e^{W^3/3+2^{2/3}(\xi_j-T) W^2-2^{1/3}(u_j+\beta_T)W}\\
&=2^{1/3} \Ai(2^{4/3}(\xi_j-T)^2+2^{1/3}(u_j+\beta_T)) e^{\frac83 (\xi_j-T)^3+2(\xi_j-T)(u_j+\beta_T)}.
\end{aligned}
\end{equation}
\end{lem}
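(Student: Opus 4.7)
The plan is to substitute the scalings \eqref{eqA6} for $(n_j,x_j)$ and \eqref{eqA8} for $(\tau, B_\tau)$ into the explicit contour integral formula for $\bar S_{-t, n_j'}(y',x_j)$ from \eqref{eqA7} with $n_j' = n_j - \tau$ and $y' = B_\tau$, and then run the standard steep descent analysis at the double critical point $w=\lambda$ of the leading exponent, paralleling the proof of Lemma~\ref{lemA1}. Writing the integrand as $e^{tg_t(w)}$, the $O(t)$ leading part is $g(w) = (1-\lambda)^2\log(1-w) + (w-\lambda) - \lambda^2\log w$; a direct computation gives $g'(\lambda)=g''(\lambda)=0$ and $g'''(\lambda)=-2/\chi$. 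I would therefore use $w = \lambda - \chi^{1/3}t^{-1/3}W$ and deform $\Gamma_0$ to the standard Airy steep descent contour $\langle$ consisting of rays from $0$ at angles $\pm\pi/3$ from the real axis, so that the cubic expansion of $g$ produces the $+W^3/3$ term in the exponent.

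The key observation that drives the rest is that $\tau$ and the $O(t^{2/3})$ leading part of $B_\tau$ combine with the $O(t^{2/3})$ parts of $n_j$ and $x_j$ in such a way that $\xi_j$ is effectively replaced by $\xi_j-T$: one checks that at order $t^{2/3}$ the coefficient of $\log(1-w)$ becomes $-2^{5/3}\chi^{1/3}(1-\lambda)(\xi_j-T) t^{2/3}$ and the coefficient of $\log w$ becomes $-\lambda\,2^{5/3}\chi^{1/3}(\xi_j-T) t^{2/3}$. Taylor expanding $\log w$ and $\log(1-w)$ around $w=\lambda$ to second order in $\chi^{1/3}t^{-1/3}W$, the $W$-linear pieces cancel identically while the $W^2$ contribution yields $+2^{2/3}(\xi_j-T)W^2$; the sign flip relative to Lemma~\ref{lemA1} is due to expanding around $\lambda$ rather than $1-\lambda$. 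The $O(t^{1/3})$ corrections coming from $x_j$ and from the fluctuation $\beta_T$ inside $B_\tau$ combine linearly into the $W$-coefficient $-2^{1/3}(u_j+\beta_T)$. The $O(t)$ critical value $tg(\lambda)$ is cancelled by the multiplicative prefactor $(1-\lambda)^{-n_j'+y'-x_j}\lambda^{n_j'}$, while the Jacobian from $dw = -\chi^{1/3}t^{-1/3}\,dW$ combines with the external normalization $2^{1/3}\chi^{2/3}t^{1/3}/\lambda$ into exactly $2^{1/3}/(2\pi\I)$. The alternative Airy-function form is then obtained by completing the cube $W\mapsto W+2^{2/3}(\xi_j-T)$ and invoking $\Ai(a) = \frac{1}{2\pi\I}\int_\langle e^{W^3/3 - aW}\,dW$.

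The main technical obstacle I anticipate is the rigorous justification of the contour deformation and exchange of limit with integral, in particular tail bounds along $\langle$ that are uniform in the random inputs $(T,\beta_T)$ on compact sets, since this lemma is later integrated against the limiting Brownian motion via Donsker's invariance principle. However, since $\Re(W^3)$ decays cubically on the tails of $\langle$ and the subleading corrections grow only quadratically in $|W|$, this is essentially standard and the details closely parallel the estimates in~\cite{BF08,MQR17}; the only genuinely new ingredient compared to those references is the $O(t^{2/3})$ leading part of $B_\tau$, which is precisely what gets absorbed into the shift $\xi_j \to \xi_j - T$.
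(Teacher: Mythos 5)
Your proposal is correct and follows essentially the same route as the paper: the paper's proof simply invokes the steep descent analysis of the preceding lemma with the change of variables $w=\lambda-\chi^{1/3}t^{-1/3}W$ around the critical point $\lambda$, which is exactly what you carry out, with the scalings of $\tau$ and $B_\tau$ absorbed into the replacement $\xi_j\to\xi_j-T$ and $u_j\to u_j+\beta_T$. Your computation of $g'(\lambda)=g''(\lambda)=0$, $g'''(\lambda)=-2/\chi$ and the bookkeeping of the $t^{2/3}$ and $t^{1/3}$ terms match the stated limit, so the proposal is a fleshed-out version of the paper's (very terse) argument.
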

\begin{proof}
This time we do the change of variables around the critical point $\lambda$ as $w=\lambda-\chi^{1/3} t^{-1/3} W$.
\end{proof}

From this we get the limit of $\bar S^{{\rm epi}(X_0)}$.
\begin{lem}\label{lem3A}
Under the scaling \eqref{eqA6} we have:
\begin{equation}
\begin{aligned}
\lim_{t\to\infty} &\frac{2^{1/3}\chi^{2/3} t^{1/3}}{\lambda} \bar S^{{\rm epi}(X_0)}_{-t,n_j}(y,x_j)\\
 =&2^{1/3} \Ai(2^{4/3}\xi_j^2+2^{1/3}(u_j+v)) e^{\frac83 \xi_i^3+2\xi_j(u_j+v)} \Id_{v\geq 0}\\
 &+ 2^{1/3} \Ai(2^{4/3} \xi_j^2 +2^{1/3}(u_j-v)) e^{\frac83 \xi_j^3+2\xi_j(u_j-v)} \Id_{v<0}.
\end{aligned}
\end{equation}
\end{lem}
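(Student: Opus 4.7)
The plan is to pass to the scaling limit in the expectation defining $\bar S^{\rm epi(X_0)}_{-t,n_j}(y,x_j)$ by combining Donsker's theorem for the random walk $B_m$ with the pointwise convergence of the integrand established in the previous lemma. Under the scaling \eqref{eqA8}, the walk $B_m$ (increments $-k$ of probability $2^{-k}$, mean $-2$ and variance $2$) converges to a Brownian motion $\beta_T$ of diffusivity $2$ started at $\beta_0 = v$. Since $X_0(m+1) = -\lfloor (m+1)/\lambda\rfloor$ has macroscopic slope $-1/\lambda$ that is already absorbed in the deterministic drift of $B_m$ in \eqref{eqA8}, the barrier condition $B_m > X_0(m+1)$ reduces in the limit to $\beta > 0$, and the stopping time $\tau$ scales to the first passage time $T$ of $0$ from below by $\beta$. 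Also $\Id_{\tau < n_j}\to 1$ since $n_j = \Or(t)$ while $\tau = \Or(t^{2/3})$. The case $v \geq 0$ is then immediate: $B_0 = y > X_0(1)$ for all $t$ large, so $\tau = 0$ and applying the previous lemma at $T = 0,\,\beta_T = v$ recovers the first term.

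For $v < 0$, the Brownian motion must first reach the barrier, so $\beta_T = 0$ while $T$ is a nontrivial hitting time. After justifying the exchange of limit and expectation (by steep-descent bounds on $\bar S_{-t,n_j-\tau}(B_\tau,x_j)$ uniform along the random-walk trajectory), the limit becomes $\E_v[F(T,0)]$, where the previous lemma identifies
\begin{equation*}
F(T,\beta) := \frac{2^{1/3}}{2\pi\I}\int_{\langle} dW\, e^{W^3/3 + 2^{2/3}\xi_j W^2 - 2^{1/3} u_j W}\, e^{-2^{2/3} T W^2 - 2^{1/3}\beta W}.
\end{equation*}
Pulling the expectation inside the $W$-integral reduces the only $T$-dependent factor to a Laplace transform: for Brownian motion of diffusivity $2$ started at $v<0$ and hitting $0$, $\E_v[e^{-\mu T}] = e^{-|v|\sqrt{\mu}}$. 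Substituting $\mu = 2^{2/3} W^2$ with the branch $\sqrt{\mu} = 2^{1/3} W$ (valid after deforming $\langle$ into $\Re(W) > 0$) gives $\E_v[e^{-2^{2/3} T W^2}] = e^{2^{1/3} v W}$, so
\begin{equation*}
\E_v[F(T,0)] = \frac{2^{1/3}}{2\pi\I}\int_{\langle} dW\, e^{W^3/3 + 2^{2/3}\xi_j W^2 - 2^{1/3}(u_j - v) W}.
\end{equation*}
By the same completing-the-cube Airy identity used in Lemma~\ref{lemA1} and in the previous lemma, this evaluates to $2^{1/3}\Ai(2^{4/3}\xi_j^2 + 2^{1/3}(u_j-v))\, e^{\frac83\xi_j^3 + 2\xi_j(u_j-v)}$, the second term of the stated formula. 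Structurally this is the reflection principle: the $v<0$ limit is the $v\geq 0$ limit with $v\mapsto -v$.

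The main obstacle is the interchange of limit and expectation, which requires uniform-in-$t$ Gaussian-type decay estimates on the contour integral defining $\bar S_{-t,n_j-\tau}(B_\tau,x_j)$ along typical random-walk trajectories, together with a justification of the contour deformation used to apply the hitting-time Laplace transform pointwise in $W$. Bounds of precisely this type are carried out for $\lambda = 1/2$ in Appendix~B of~\cite{MQR17} and extend to general $\lambda \in (0,1)$ with no new conceptual difficulty, consistent with the remark already made in the main text.
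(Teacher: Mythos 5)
Your proposal is correct and follows essentially the same route as the paper: split into $v\geq 0$ (where $\tau=0$ and the previous lemma applies directly) and $v<0$ (where the limit is the expectation over the Brownian hitting time of $0$, evaluated by integrating the explicit hitting-time density $\tfrac{-v}{\sqrt{4\pi T^3}}e^{-v^2/(4T)}$ against the $W$-integral, which is exactly your Laplace-transform step $\E_v[e^{-2^{2/3}TW^2}]=e^{2^{1/3}vW}$ and yields the reflection $v\mapsto -v$), with the uniform-in-$t$ bounds deferred just as in the paper. One small remark: after the conjugation \eqref{eqA7} the relevant walk has geometric increments $\Pb(B_m-B_{m-1}=-k)=\lambda(1-\lambda)^{k-1}$ (mean $-1/\lambda$), not $2^{-k}$, which is what makes the drift in \eqref{eqA8} and the diffusivity-$2$ limit come out right for general $\lambda$.
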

\begin{proof}We start with
\begin{equation}
\frac{2^{1/3}\chi^{2/3} t^{1/3}}{\lambda} \bar S^{{\rm epi}(X_0)}_{-t,n_j}(y,x_j) = \lim_{t\to\infty}\int_{\R_+} d\nu_v(T) \frac{2^{1/3}\chi^{2/3} t^{1/3}}{\lambda}\bar S^{{\rm epi}(X_0)}_{-t,n_j-\tau}(B_\tau,x_j),
\end{equation}
where $d\nu_v(T)$ is the law of the hitting time $T=\inf\{t\geq 0\,|\, \beta_T\geq 0\}$ with $\beta_0=v$.

Case 1: $v\geq 0$. Then $d\nu_v(T)=\delta_0(T)$ and $\beta_0=v$. Then
\begin{equation}
\lim_{t\to\infty} \frac{2^{1/3}\chi^{2/3} t^{1/3}}{\lambda} \bar S^{{\rm epi}(X_0)}_{-t,n_j}(y,x_j)=2^{1/3} \Ai(2^{4/3}\xi_j^2+2^{1/3}(u_j+v)) e^{\frac83 \xi_i^3+2\xi_j(u_j+v)}
\end{equation}

Case 2: $v<0$. As $t\to\infty$ we have $d\nu_v(T)=\frac{-v}{\sqrt{4\pi T^3}} e^{-v^2/(4T)} dT$ and $\beta_T=0$. This leads to
\begin{equation}
\begin{aligned}
\lim_{t\to\infty} &\frac{2^{1/3}\chi^{2/3} t^{1/3}}{\lambda} \bar S^{{\rm epi}(X_0)}_{-t,n_j}(y,x_j)\\
&=\int_0^\infty dT \frac{-v}{\sqrt{4\pi T^3}} e^{-v^2/(4T)} \frac{2^{1/3}}{2\pi\I}\int dW e^{W^3/3-2^{1/3} u_j W+2^{2/3} (\xi_j-T)W^2}\\
&=2^{1/3} \Ai(2^{4/3} \xi_j^2 +2^{1/3}(u_j-v)) e^{\frac83 \xi_j^3+2\xi_j(u_j-v)},
\end{aligned}
\end{equation}
where first one does the integral over $T$ (it is explicit) and then we rewrite the result in terms of Airy functions and exponentials.
\end{proof}

Combining Lemma~\ref{lemA1} and Lemma~\ref{lem3A} (modulo some a priori bounds for large $|v|$ allowing to exchange the $t\to\infty$ limit and the sum over $y$, which can be obtained also with usual steep descent analysis) one obtains the following result.
\begin{prop}\label{propA6}
Under the scaling \eqref{eqA6},
\begin{equation}
\begin{aligned}
\lim_{t\to\infty}&\frac{2^{1/3}\chi^{2/3} t^{1/3}}{\lambda}  \sum_{y} S_{-t,-n_i}(y,x_i) \bar S^{{\rm epi}(X_0)}_{-t,n_j}(y,x_j)\\
=& 2^{1/3}\frac{e^{\frac23 \tilde \xi_j^3+\tilde\xi_j \tilde u_j}}{e^{\frac23 \tilde\xi_i^3+\tilde\xi_i \tilde u_i}} \int_0^\infty dv e^{v(\tilde \xi_j-\tilde \xi_i)}\Ai(\tilde \xi_i^2+\tilde u_i+v) \Ai(\tilde \xi_j^2+\tilde u_j+v) \\
&+2^{1/3}\frac{e^{\frac23 \tilde \xi_j^3+\tilde\xi_j \tilde u_j}}{e^{\frac23 \tilde \xi_i^3+\tilde\xi_i \tilde u_i}} \int_{-\infty}^0 dv e^{-v(\tilde \xi_j+\tilde \xi_i)} \Ai(\tilde \xi_i^2+\tilde u_i+v) \Ai(\tilde\xi_j^2+\tilde u_j-v),
\end{aligned}
\end{equation}
where the last term can also be rewritten as
\begin{equation}
\begin{aligned}
&- 2^{1/3}\frac{e^{\frac23 \tilde \xi_j^3+\tilde \xi_j \tilde u_j}}{e^{\frac23 \tilde\xi_i^3+\tilde\xi_i \tilde u_i}} \int_0^\infty dv e^{-v(\tilde \xi_j+\tilde \xi_i)} \Ai(\tilde\xi_i^2+\tilde u_i+v) \Ai(\tilde\xi_j^2+\tilde u_j-v)\\
&+\Ai(u_i+u_j+(\xi_i-\xi_j)^2) e^{\frac23 (\xi_j-\xi_i)^3+(\xi_j-\xi_i)(u_i+u_j)},
\end{aligned}
\end{equation}
where we used the short cuts $\tilde\xi_i=2^{2/3}\xi_i$ and $\tilde u_i=2^{1/3}u_i$.
\end{prop}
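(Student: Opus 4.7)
The plan is to recognize the sum over $y$ as a Riemann approximation of a $v$-integral via the change of variable $y = c_t v$, where $c_t := \frac{2^{1/3}\chi^{2/3}t^{1/3}}{\lambda}$, and then insert the pointwise limits from Lemmas~\ref{lemA1} and~\ref{lem3A}. Since $\Delta y=1$ corresponds to $\Delta v=1/c_t$, one has $\sum_y\cdot\approx c_t\int dv\cdot$, so that
\[
c_t\sum_y S_{-t,-n_i}(y,x_i)\,\bar S^{{\rm epi}(X_0)}_{-t,n_j}(y,x_j)\;\approx\;\int dv\,\bigl(c_t S_{-t,-n_i}(c_t v,x_i)\bigr)\bigl(c_t\bar S^{{\rm epi}(X_0)}_{-t,n_j}(c_t v,x_j)\bigr),
\]
each factor converging pointwise by the two lemmas.

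Because Lemma~\ref{lem3A} splits into a $v\ge 0$ case and a $v<0$ case, the integral becomes $\int_0^\infty+\int_{-\infty}^0$ naturally. I would then substitute $w=2^{1/3}v$ and use the identities
\[
\tfrac{8}{3}\xi^3=\tfrac{2}{3}\tilde\xi^3,\qquad 2\xi u=\tilde\xi\tilde u,\qquad 2\xi\cdot 2^{-1/3}w=\tilde\xi w
\]
(which follow from $\tilde\xi=2^{2/3}\xi$, $\tilde u=2^{1/3}u$) to consolidate the exponentials from Lemmas~\ref{lemA1} and~\ref{lem3A}. All the $v$-independent exponential pieces combine into the common ratio $e^{\frac{2}{3}\tilde\xi_j^3+\tilde\xi_j\tilde u_j}/e^{\frac{2}{3}\tilde\xi_i^3+\tilde\xi_i\tilde u_i}$, while the $v$-dependent factors become $e^{w(\tilde\xi_j-\tilde\xi_i)}$ on the $v\ge 0$ piece and $e^{-w(\tilde\xi_i+\tilde\xi_j)}$ on the $v<0$ piece, reproducing the first expression in the statement.

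The equivalent rewriting is a purely algebraic identity. Writing $\int_{-\infty}^0=\int_{\R}-\int_0^\infty$ reduces matters to evaluating
\[
I := \int_{\R}dv\, e^{-cv}\Ai(A+v)\Ai(B-v),
\]
with $A=\tilde\xi_i^2+\tilde u_i$, $B=\tilde\xi_j^2+\tilde u_j$, $c=\tilde\xi_i+\tilde\xi_j$. Inserting the Fourier representation $\Ai(x)=\frac{1}{2\pi}\int dp\,e^{i(p^3/3+xp)}$ and shifting the $q$-contour by $ic$ converts the $v$-integration into the delta constraint $p=q$; completing the cube via the shift $p\mapsto 2^{-1/3}(r-ic\cdot 2^{-2/3})$ yields
\[
I = 2^{-1/3}\,e^{c(A-B)/2}\,\Ai\!\left(2^{-1/3}(A+B-c^2/2)\right).
\]
Substituting the values of $A,B,c$ and simplifying gives $2^{-1/3}(A+B-c^2/2)=u_i+u_j+(\xi_i-\xi_j)^2$, while the combination $\tfrac{2}{3}(\tilde\xi_j^3-\tilde\xi_i^3)+\tilde\xi_j\tilde u_j-\tilde\xi_i\tilde u_i+c(A-B)/2$ collapses to $\tfrac{2}{3}(\xi_j-\xi_i)^3+(\xi_j-\xi_i)(u_i+u_j)$ after a short algebraic check; the $2^{1/3}$ in the outside prefactor cancels the $2^{-1/3}$ from $I$, producing the advertised explicit Airy term.

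The main technical obstacle is the exchange of $\lim_{t\to\infty}$ with the sum $\sum_y$, since Lemmas~\ref{lemA1} and~\ref{lem3A} are only pointwise statements. This requires uniform (in $t$ and $v$) tail bounds on $c_t S_{-t,-n_i}$ and $c_t\bar S^{{\rm epi}(X_0)}_{-t,n_j}$ with at least exponential decay in $|v|$, together with control of the hitting-time density $d\nu_v(T)$ entering $\bar S^{{\rm epi}(X_0)}$ for large $|v|$. Such estimates come from refining the steep-descent contour analysis already used to prove the two lemmas, analogous to Section~6.1 of~\cite{BF08} or Appendix~B of~\cite{MQR17}; with them in hand, dominated convergence closes the argument.
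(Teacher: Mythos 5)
Your proposal is correct and takes essentially the same route as the paper, which proves Proposition~\ref{propA6} exactly by combining Lemmas~\ref{lemA1} and~\ref{lem3A} with steep-descent a priori bounds in $|v|$ to justify exchanging the $t\to\infty$ limit with the sum over $y$ (the paper leaves these bounds, the Riemann-sum conversion and the algebra implicit, just as you defer them). Your explicit verification of the rewriting via the Fourier representation of the Airy function is a correct derivation of the standard identity the paper tacitly invokes, and all your constants and exponents check out.
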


Combining Lemma~\ref{lemAQ} and Proposition~\ref{propA6} we get the limit of the kernel under scaling \eqref{eqA6}.
\begin{prop}\label{propALimitkernel}
Under the scaling \eqref{eqA6} and $\tilde\xi_i=2^{2/3}\xi_i$ and $\tilde u_i=2^{1/3}u_i$,
\begin{equation}
\begin{aligned}
&\lim_{t\to\infty}\frac{2^{1/3}\chi^{2/3} t^{1/3}}{\lambda} K_t(n_i,x_i;n_j,x_j)\\
&= 2^{1/3} K_{2\to 1}(\tilde \xi_i,\tilde u_i+\min\{0,\tilde\xi_i\}^2;\tilde \xi_j,\tilde u_j+\min\{0,\tilde\xi_j\}^2),
\end{aligned}
\end{equation}
where $K_{2\to 1}$ is the Airy$_{2\to 1}$ kernel derived in~\cite{BFS07} (see Appendix~A of~\cite{BFS07} for explicit expressions)\footnote{To be precise, we obtain a conjugated version of the kernel written in~\cite{BFS07}.}.
\end{prop}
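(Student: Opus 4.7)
The plan is to decompose $K_t$ into its two pieces: the random walk kernel $-Q^{n_j-n_i}(x_i,x_j)\Id_{n_i<n_j}$ and the double-sum kernel $\sum_{y} S_{-t,-n_i}(y,x_i)\,\bar S^{{\rm epi}(X_0)}_{-t,n_j}(y,x_j)$. For each piece the $t\to\infty$ asymptotics under the scaling \eqref{eqA6} have essentially already been computed, so the work is to assemble them and rewrite the result in the form of the Airy$_{2\to 1}$ kernel of~\cite{BFS07}.

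First, for the $Q$ part I would invoke Lemma~\ref{lemAQ} directly: after multiplication by $\frac{2^{1/3}\chi^{2/3} t^{1/3}}{\lambda}$, the rescaled $Q^{n_j-n_i}(x_i,x_j)$ converges on $\xi_j>\xi_i$ to the Gaussian heat kernel $\frac{1}{\sqrt{4\pi(\xi_j-\xi_i)}}\,e^{-(u_j-u_i)^2/(4(\xi_j-\xi_i))}$. After the change of variables $\tilde\xi_i=2^{2/3}\xi_i$, $\tilde u_i=2^{1/3}u_i$, and multiplication by the extra $2^{1/3}$ prefactor, this is exactly the Gaussian piece of the Airy$_{2\to 1}$ kernel (up to the conjugation factor $e^{\frac23(\tilde\xi_j^3-\tilde\xi_i^3)+\tilde\xi_j\tilde u_j-\tilde\xi_i\tilde u_i}$, which is trivially equal to $1$ when integrated against itself and accounts for the footnote "conjugated version"). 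Then for the second piece I would quote Proposition~\ref{propA6}, which already writes the limit as the sum of two Airy-type integrals, one over $v\geq 0$ and one over $v<0$, together with the additional Gaussian term that cancels the $Q$ contribution on the complementary range $\xi_j\leq\xi_i$ and reproduces the correct Gaussian on $\xi_j>\xi_i$.

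The remaining task is purely algebraic: identify the assembled limit with $2^{1/3} K_{2\to 1}(\tilde\xi_i,\tilde u_i+\min\{0,\tilde\xi_i\}^2;\tilde\xi_j,\tilde u_j+\min\{0,\tilde\xi_j\}^2)$ using the explicit formulas in Appendix~A of~\cite{BFS07}. The shift $\min\{0,\tilde\xi\}^2$ appears naturally from the exponents $\frac83\xi_i^3+2\xi_j(u_j\pm v)$ in Lemma~\ref{lem3A}: on the side $v\geq 0$ (which corresponds to the Airy$_2$-like regime) no extra shift is needed, while on the side $v<0$ (corresponding to the Airy$_1$-like regime) completing the square in the Airy integrand produces exactly the $\tilde\xi_i^2$ shift in the $u$-variable on the negative-$\xi$ side. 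Matching the two sides case by case according to the signs of $\xi_i$ and $\xi_j$ reproduces the formula of~\cite{BFS07} modulo the above-mentioned conjugation.

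The genuinely delicate step — but one where I can lean on standard machinery — is the exchange of the $t\to\infty$ limit with the sum over $y$ (equivalently, the integral over $v$) in the second piece, and with the Fredholm expansion of the determinant. This requires uniform exponential tail bounds on the rescaled $S_{-t,-n_i}$ and $\bar S^{{\rm epi}(X_0)}_{-t,n_j}$ for $|v|$ large, obtained by deforming the contours $\Gamma_0$ to the steep-descent contours through $1-\lambda$ and $\lambda$ respectively and using the standard convexity estimates for the rescaled exponents; together with a trace-class estimate for the kernel (following Appendix~B of~\cite{MQR17}, which treats the $\lambda=1/2$ case in a manner that transfers verbatim to generic $\lambda\in(0,1)$ after the conjugation in \eqref{eqA7}). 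The paper explicitly says it will not write out these bounds, so in my plan I would also state them as standard and refer to~\cite{BFS07,BFP06,BF08,MQR17}, and the main obstacle is really this uniform tail control rather than any new algebraic identity.
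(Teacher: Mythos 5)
Your plan coincides with the paper's own proof: the paper simply combines Lemma~\ref{lemAQ} (for the $Q$ part) with Proposition~\ref{propA6} (for the $\sum_y S\,\bar S^{\rm epi}$ part), identifies the result with the conjugated Airy$_{2\to 1}$ kernel of~\cite{BFS07}, and, exactly as you do, defers the uniform large-$|v|$/large-$u_i$ tail bounds and trace-class control to standard steep-descent arguments and Appendix~B of~\cite{MQR17}. One small inaccuracy in your description (not affecting the approach): the extra term produced by the second rewriting in Proposition~\ref{propA6} is an Airy-function (reflection) term, not a Gaussian, and the heat-kernel piece coming from $-Q^{n_j-n_i}\Id_{n_i<n_j}$ is not cancelled but remains as the usual subtracted Gaussian part of the extended kernel for $\tilde\xi_j>\tilde\xi_i$.
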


As mentioned above, we omit the proof of a-priori bounds for large $u_i$, which can be as usual obtained through steep descent analysis. These give the convergence of the Fredholm determinants as well, and not only of the kernel (see e.g.~Appendix~B of~\cite{MQR17} for  a discussion of the trace class convergence in a framework almost identical to the one considered here). The convergence of the Fredholm determinants implies the following result.
\begin{thm}\label{thmA9}
Consider the scaled height function \eqref{eqA1} with $a=0$. Then, for $\xi_1,\ldots,\xi_m\geq 0$,
\begin{equation}
\begin{aligned}
&\lim_{t\to\infty}\Pb(h^{\rm resc}_t(\xi_i)\leq s_i,1\leq i \leq m)\\
& = \Pb(2^{-1/3}{\cal A}_{2\to 1}(2^{2/3} \xi_i)-2\min\{0,\xi_i\}^2\leq s_i,1\leq i\leq m),
\end{aligned}
\end{equation}
where ${\cal A}_{2\to 1}$ is the Airy$_{2\to 1}$ process of~\cite{BFS07}.
\end{thm}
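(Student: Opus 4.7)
The plan is to push the exact Fredholm determinant representation of Theorem~\ref{thmA1} through the scaling to the pointwise kernel limit identified in Proposition~\ref{propALimitkernel}. Under the scaling \eqref{eqA4}, the event $\bigcap_{i=1}^m\{h^{\rm resc}_t(\xi_i)\leq s_i\}$ coincides with $\bigcap_{i=1}^m\{X_t(n_i)>a_i\}$, so Theorem~\ref{thmA1} gives
\begin{equation}
\Pb(h^{\rm resc}_t(\xi_i)\leq s_i,1\leq i\leq m)=\det(\Id-\bar\chi_a K_t\bar\chi_a)_{\ell^2(\{n_1,\ldots,n_m\}\times\Z)}.
\end{equation}
I would rewrite this using the trace expansion $\det(\Id-K)=\sum_{k\geq 0}\tfrac{(-1)^k}{k!}\Tr(K^k)$ and apply the change of variable $x_i=(1-2\lambda)t-2^{5/3}\chi^{1/3}\xi_i t^{2/3}-\tfrac{2^{1/3}\chi^{2/3}}{\lambda}u_i t^{1/3}$. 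The discrete sums inside each trace then become Riemann sums with spacing $\Delta u=\tfrac{\lambda}{2^{1/3}\chi^{2/3}}t^{-1/3}$, which is precisely the Jacobian compensating the prefactor $\tfrac{2^{1/3}\chi^{2/3}t^{1/3}}{\lambda}$ appearing in Proposition~\ref{propALimitkernel}. In the tilde variables $\tilde\xi_i=2^{2/3}\xi_i$, $\tilde u_i=2^{1/3}u_i$ the cutoff $x_i\leq a_i$ becomes $\tilde u_i\leq 2^{1/3}s_i$.

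The main obstacle is upgrading the pointwise kernel convergence from Proposition~\ref{propALimitkernel} to trace-class convergence on $L^2(\{\tilde\xi_1,\ldots,\tilde\xi_m\}\times\R)$, so that the Fredholm determinants themselves converge. The standard route is to produce uniform a priori bounds of the form $\tfrac{2^{1/3}\chi^{2/3}t^{1/3}}{\lambda}|K_t(n_i,x_i;n_j,x_j)|\leq C\,e^{-c(u_i+u_j)}$, valid for all $t$ sufficiently large on the truncation region $u_i,u_j\geq \min_k s_k$. These follow from deforming the contours in \eqref{eqA7} through the non-degenerate critical points $\lambda$ and $1-\lambda$ onto steep-descent paths on which the real part of the phase is dominated by the cubic term $W^3/3$, exactly as for $\lambda=1/2$ in Appendix~B of~\cite{MQR17}; the only modification for general $\lambda\in(0,1)$ is the location of the critical points and the conjugation factors already incorporated in \eqref{eqA7}, neither of which introduces new difficulty. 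Together with pointwise convergence, dominated convergence applied both within each trace and to the series in $k$ then yields convergence of the Fredholm determinants.

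Finally I would identify the limit. The limiting Fredholm determinant is $\det(\Id-\bar\chi_{2^{1/3}s}\hat K_{2\to 1}\bar\chi_{2^{1/3}s})_{L^2(\{\tilde\xi_1,\ldots,\tilde\xi_m\}\times\R)}$, where $\hat K_{2\to 1}(\tilde\xi_i,\tilde u_i;\tilde\xi_j,\tilde u_j)=K_{2\to 1}(\tilde\xi_i,\tilde u_i+\min\{0,\tilde\xi_i\}^2;\tilde\xi_j,\tilde u_j+\min\{0,\tilde\xi_j\}^2)$. Changing variables to $\tilde r_i=\tilde u_i+\min\{0,\tilde\xi_i\}^2$ and invoking the definition of the Airy$_{2\to 1}$ process in~\cite{BFS07}, this determinant equals $\Pb({\cal A}_{2\to 1}(\tilde\xi_i)\leq 2^{1/3}s_i+\min\{0,\tilde\xi_i\}^2,\,1\leq i\leq m)$. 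Using $\tilde\xi_i=2^{2/3}\xi_i$, the identity $\min\{0,\tilde\xi_i\}^2=2^{4/3}\min\{0,\xi_i\}^2$, and dividing through by $2^{1/3}$ yields the stated identity, in which the $-2\min\{0,\xi_i\}^2$ term drops out under the hypothesis $\xi_i\geq 0$.
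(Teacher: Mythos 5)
Your proposal is correct and follows essentially the same route as the paper: the Fredholm determinant formula of Theorem~\ref{thmA1}, the pointwise kernel limit of Proposition~\ref{propALimitkernel} under the scaling \eqref{eqA4}--\eqref{eqA6}, a priori steep-descent bounds (as in Appendix~B of~\cite{MQR17}) to upgrade to convergence of the Fredholm determinants, and then identification of the limiting determinant with the Airy$_{2\to 1}$ distribution via the change of variables $\tilde\xi_i=2^{2/3}\xi_i$, $\tilde u_i=2^{1/3}u_i$. One minor slip: under this scaling the cutoff $x_i\leq a_i$ becomes $\tilde u_i\geq 2^{1/3}s_i$ (not $\leq$), which is in fact what your final identification with $\Pb\bigl({\cal A}_{2\to 1}(\tilde\xi_i)\leq 2^{1/3}s_i+\min\{0,\tilde\xi_i\}^2\bigr)$ implicitly uses.
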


Finally, let us explain why the $a>0$ case can be recovered by the subsequent limits $t\to\infty$ and then shifting $\xi_i\to\infty$ simultaneously. Let us do the change of variables $\xi_i\to \xi_i+L$. Consider any backwards geodesics from ${\bf x}(t)=(1-2\lambda)t-2^{5/3}\chi^{1/3}\xi t^{2/3}$ at time $t$. On the event $\Omega_L=\{{\bf x}(0)<0\}$, the height function at time $t$ is not depending on the fact that the initial condition starts with empty initial condition on $\Z_+$. One can show that $\Pb(\Omega_L)\geq 1-C e^{-c L^2}$ for some constants $C,c>0$, thus $\Pb(\Omega_L)\to 1$ as $L\to\infty$. This was shown for general density $\lambda$ in Lemma~4.3 of~\cite{FO17} in the context of LPP, but it can be translated into this case as well. For $L\sim t^{\epsilon}$ for some $\epsilon>0$, the bound can be obtained in a similar way as in the proof of Proposition~\ref{prop5.3}.

This means that the law of $\lim_{L\to\infty}\lim_{t\to\infty}h^{\rm resc}_t(\xi_i+L)$ and the obtained by setting $a>0$ and taking $t\to\infty$ are the same. Since we have the identity $\lim_{L\to\infty}2^{-1/3} {\cal A}_{2\to 1}(2^{2/3}\xi+L)={\cal A}_1(\xi)$, see~\cite{BFS07}, we get the following corollary.
\begin{cor}\label{corAfinal}
Consider the scaled height function \eqref{eqA1} with $a>0$. Then, for $\xi_1,\ldots,\xi_m\geq 0$,
\begin{equation}
\lim_{t\to\infty}\Pb(h^{\rm resc}_t(\xi_i)\leq s_i,1\leq i \leq m) = \Pb({\cal A}_1(\xi_i)\leq s_i,1\leq i\leq m),
\end{equation}
where ${\cal A}_1$ is the Airy$_1$ process of~\cite{Sas05,BFPS06}.
\end{cor}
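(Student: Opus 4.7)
The plan is to reduce the $a>0$ case to the $a=0$ case treated in Theorem~\ref{thmA9} by a spatial shift of $\xi$. Concretely, in the scaling \eqref{eqA1}, replace $\xi_i$ by $\xi_i+L$ for a large parameter $L$; this converts the shift $a>0$ into an effectively longer transverse displacement, so that the rescaled position $a(\xi_i+L,t)$ places us in the same scaling window as the $a=0$ analysis but offset by $L$. The goal is then to show that, along this shift, the limit of the height function is the same whether the initial condition is the flat one used in Theorem~\ref{thmA9} or the ``one-sided flat'' one we actually care about, and then to send $L\to\infty$ using the known convergence of the Airy$_{2\to 1}$ process to the Airy$_1$ process at $+\infty$.

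The first key step is a localization argument. Let $\mathbf{x}(\cdot)$ be any backwards geodesics ending at the space-time point $((1-2\lambda)t - a(\xi_i+L,t),t)$. Define the event
\begin{equation}
\Omega_L=\{\mathbf{x}(0)<0\}.
\end{equation}
On $\Omega_L$, the height function at this endpoint depends only on the initial configuration in $\Z_-$, and is therefore unaffected by whether we keep the right half of the flat initial configuration or declare $\Z_+$ empty. Consequently, on $\Omega_L$ the two height functions (and hence the rescaled versions $h_t^{\rm resc}$) coincide jointly at all the sample points $\xi_i+L$. The quantitative input is the tail estimate $\Pb(\Omega_L)\geq 1 - C e^{-c L^2}$, which for the density $\lambda$ considered here follows from an adaptation of Lemma~4.3 of~\cite{FO17} to the TASEP height function formulation (an analogous estimate was already needed in the proof of Proposition~\ref{prop5.3}). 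In particular, $\Pb(\Omega_L)\to 1$ as $L\to\infty$ uniformly in $t$ large.

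Given this, for each fixed $L$ Theorem~\ref{thmA9} applied to the shifted points $\xi_i+L$ yields
\begin{equation}
\lim_{t\to\infty}\Pb\bigl(h^{\rm resc}_t(\xi_i+L)\leq s_i,\ 1\leq i\leq m\bigr)=\Pb\bigl(2^{-1/3}\mathcal{A}_{2\to 1}(2^{2/3}(\xi_i+L))-2\min\{0,\xi_i+L\}^2\leq s_i,\ 1\leq i\leq m\bigr),
\end{equation}
and for $\xi_i\geq 0$ and $L$ large the $\min$ term vanishes. Using the identity $\lim_{L\to\infty} 2^{-1/3}\mathcal{A}_{2\to 1}(2^{2/3}\xi+L)=\mathcal{A}_1(\xi)$ from~\cite{BFS07} and the continuity of the distribution of $\mathcal{A}_1$ in its spatial variable, the right-hand side converges as $L\to\infty$ to $\Pb(\mathcal{A}_1(\xi_i)\leq s_i,\ 1\leq i\leq m)$. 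Combining with the localization step gives the claim, since on $\Omega_L$ the law of $h^{\rm resc}_t(\xi_i+L)$ coincides with that of the scaled height function with the genuine $a=0$ flat initial data, up to a harmless deterministic shift in the scaling.

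The main obstacle is justifying that the two limits $t\to\infty$ and $L\to\infty$ can be taken in the correct order (or that they can be replaced by a joint limit $L=L(t)\to\infty$ slowly). The clean way is to take $L$ fixed, use Theorem~\ref{thmA9} to get the $t\to\infty$ limit with the shift, then let $L\to\infty$; but one must check that the error coming from $\Omega_L^c$ does not accumulate when passing to finite-dimensional distributions. The uniform exponential bound $\Pb(\Omega_L^c)\leq C e^{-c L^2}$ is precisely what makes this work, and an intermediate regime $L\sim t^{\epsilon}$ can be controlled along the lines of the proof of Proposition~\ref{prop5.3} if one prefers a single-limit formulation. Everything else reduces to applying Theorem~\ref{thmA9} and the known asymptotics of $\mathcal{A}_{2\to 1}$.
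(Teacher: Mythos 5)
Your proposal follows essentially the same route as the paper's own argument: the shift $\xi_i\to\xi_i+L$, the localization event $\Omega_L=\{\mathbf{x}(0)<0\}$ with the bound $\Pb(\Omega_L)\geq 1-Ce^{-cL^2}$ adapted from Lemma~4.3 of~\cite{FO17} (with the $L\sim t^{\epsilon}$ variant handled as in Proposition~\ref{prop5.3}), then Theorem~\ref{thmA9} at the shifted points and the identity $\lim_{L\to\infty}2^{-1/3}{\cal A}_{2\to 1}(2^{2/3}\xi+L)={\cal A}_1(\xi)$ from~\cite{BFS07}. This matches the paper's proof, including its treatment of the order of the limits, so nothing further is needed.
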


\section{A bound for step initial conditions}\label{AppStepIC}

Let $h^{\rm step}(x,0)=|x|$ and fixed $\alpha\in(-1,1)$. Then it is known that~\cite{Jo00b}
\begin{equation}
\lim_{t\to\infty} \Pb(h^{\rm step}(\alpha t,t)\leq \tfrac12(1+\alpha^2)t - s(1-\alpha^2)^{2/3} 2^{-1/3}t^{1/3})=F_{\rm GUE}(s),
\end{equation}
where $F_{\rm GUE}$ is the GUE Tracy-Widom distribution function. Furthermore, there exists constants $C,c>0$ such that for all $s\geq 0$
\begin{equation}\label{eqBoundStepIC}
\Pb(|h^{\rm step}(\alpha t,t)-\tfrac12(1+\alpha^2)t|\geq s t^{1/3}) \leq C e^{-c s}
\end{equation}
uniformly for all $t$ large enough. The constants can be chosen uniformly for $\alpha$ in a bounded set strictly away from $-1$ and $1$.
Using the relation with the Laguerre ensemble of random matrices (Proposition~6.1 of~\cite{BBP06}), or to TASEP, one sets the distribution is given by a Fredholm determinant.
An exponential decay of its kernel leads directly to the upper tail. See e.g.~Lemma~1 of~\cite{BFP09} for an explicit statement. The lower tail was proven in~\cite{BFP09}  (Proposition~3 together with (56)) with a better power $s^{3/2}$, that we do not need here.

\paragraph{Acknowledgements.} The work of P.L.~Ferrari is supported  by the Deutsche Forschungsgemeinschaft (German Research Foundation) by  the CRC 1060 (Projektnummer 211504053) and  Germany's Excellence Strategy - GZ 2047/1, Projekt ID 390685813.


\end{document}